\newcommand\scalemath[2]{\scalebox{#1}{\mbox{\ensuremath{\displaystyle #2}}}}
\tikzset{
    marrow/.style={decoration={markings,mark=at position 0.5 with {\arrow{#1}}}, postaction=decorate}
}
\tikzset{%
vcenter/.style = {baseline = {([yshift = -.5ex](0, 1.25))}},%
functor fill/.style = {fill = black!30!white, opacity = .6},%
functor draw/.style = {line width = 2pt, draw = black!50!white, opacity = .6}
}
\newcommand\varitem[1]{\item[\textbf{A\arabic{enumi}\rlap{$#1$}.}]%
	\edef\@currentlabel{A\arabic{enumi}{$#1$}}}
\theoremstyle{plain}
\newtheorem{theorem}{Theorem}[section]
\newtheorem{proposition}[theorem]{Proposition}
\newtheorem{corollary}[theorem]{Corollary}
\newtheorem{lemma}[theorem]{Lemma}
\theoremstyle{definition}
\newtheorem{definition}[theorem]{Definition}
\theoremstyle{remark}
\newtheoremstyle{stepstyle}%
{}{}% space above and below
{}{}% body font and indent
{}{.}% theorem head font and punctuation
{ }% space after theorem head
{\thmnumber{#2}.\ \bfseries\boldmath\thmnote{#3}}
\theoremstyle{stepstyle}
\newcommand\TODO[3]{\hbox to 0pt{\textcolor{#1}{$^\bullet$}}\marginpar{\footnotesize \textcolor{#1}{#2: #3}}}
\DeclareMathOperator{\coker}{coker}
\DeclareMathOperator{\colim}{colim}
\DeclareMathOperator{\image}{im}
\def\Fun{\textnormal{Fun}}
\def\tame{\textnormal{Tame}}
\def\id{\textnormal{id}}
\def\Nat{\textnormal{Nat}}
\def\End{\textnormal{End}}
\def\vect{\textnormal{vect}_{\mathbb{F}}}
\def\rad{\textnormal{rad}}
\newcommand{\pos}{\mathcal{Q}}
\newcommand{\R}{\mathscr{R}}%realisation
\newcommand{\C}{\mathcal{C}}%category
\newcommand{\D}{\mathcal{D}}%posetal category
\newcommand{\A}{\mathcal{A}}%category
\newcommand{\B}{\mathcal{B}}%category
\newcommand{\ch}{\mathbf{Ch}}%category of chain complexes
\newcommand{\define}[1]{{\bf \boldmath{#1}}}
\title{Abelian and model structures on tame functors}
\author{Wojciech Chach{\'o}lski\footnote{KTH Royal Institute, Stockholm, Sweden} \and Barbara Giunti\footnote{Graz Institute of Technology, Austria, and SUNY University at Albany, NY, US} \and Claudia Landi\footnote{University of Modena and Reggio Emilia, Italy} \and Francesca Tombari \footnote{KTH Royal Institute, Stockholm, Sweden, and Max Plank Institute, Leipzig, Germany}}
\date{}
\begin{document}

\maketitle

\begin{abstract}
In this paper, we discuss certain circumstances
in which the category of tame functors inherits an abelian category structure with minimal resolutions and a model category structure with minimal cofibrant replacements. We also present a structure theorem for cofibrant objects in the category of
tame functors indexed by realizations of posets of dimension $1$ with values in the 
category of chain complexes in an abelian category
whose all objects are projectives.  
Moreover, we introduce a general technique to generate indecomposable objects in the abelian category of functors indexed by finite posets.
\end{abstract}

{\em MSC 2020:} 18G35, 55PXX. Secondary: 55N31

{\em Keywords:} tameness, minimality, chain complex, one-dimensional poset, indecomposability, cofibrant replacement, projective resolution

\section{Introduction}
Our goal in this article is to present a structure theorem for cofibrant objects in the model category $\tame(\R(\pos), \ch(\C))$   of tame functors indexed by 
the realization $\R(\pos)$  of a finite poset $\pos$  of dimension at most 1 with values in the category of non-negative chain complexes in an abelian category $\C$  whose objects are projective (\cref{thm_indecomposables}).

Since it turns out that every object in this model category admits a minimal cofibrant replacement, the structure theorem can be then used to assign simplifying invariants to any  of its  object, in particular to any functor $\R(\pos)\to\C$ (thought of as a chain complex concentrated in degree $0$). 
Such invariants have played a central role in practical settings like topological data analysis, where the structure theorem is key for doing computations. 
Nonetheless, we find the mathematical context and reasons for the structure theorem equally appealing. 
Our aim has been to describe this context and  tell a mathematical story about 
all the involved elements and assumptions leading to the structure theorem.

We start with tameness. Functors indexed by finite posets have proven to be convenient to encode
homotopical and homological properties of objects for example
in algebra~\cite{Nakajima}, geometry~\cite{Reineke}, and data analysis~\cite{oudot2015}. 
This is because if $\D$ is a finite poset, then the category of functors $\Fun(\D, \C)$ inherits from $\C$ the abelian structure, in which minimal resolutions exist and can be constructed explicitly using the structure of $\D$, or the model category structure, in which minimal cofibrant replacements exist and can also be constructed explicitly using the structure of $\D$. 
Often the finite posets relevant for a given situation share certain common features which can be expressed by these posets being subposets of another, typically infinite or continuous, poset $\pos$.
By taking the left Kan extensions along these subposet inclusions, we can embed $\Fun(\D, \C)$ into the category $\Fun(\pos, \C)$. 
This ambient category $\Fun(\pos, \C)$ provides a convenient place in which functors indexed by different finite posets can be compared and studied together.  
However, for analyzing homological and homotopical properties,  the category $\Fun(\pos, \C)$ is too complex. 
For example, when $\pos$ is infinite, although the category $\Fun(\pos, \C)$ inherits an abelian structure from $\C$, some of its objects may fail to admit minimal resolutions. 
Moreover, if $\C$ is a model category, then even in the cases when we know how to endow  $\Fun(\pos, \C)$ with a related model structure, cofibrant replacements are not explicit as they are typically constructed using transfinite induction, and again minimal such replacements may not exist.  
To avoid this complexity, we focus on the full subcategory of  $\Fun(\pos, \C)$ given by the functors in which we are the most interested: 
left Kan extensions along finite subposet inclusions $\D\subset \pos$.
Such functors are called tame and their category is denoted by $\tame(\pos,\C)$. 

Our first objective has been to identify conditions on the poset $\pos$ that would guarantee that the category $\tame(\pos,\C)$ inherits an abelian structure from $\C$. 
One of the main obstacles is to ensure $\tame(\pos,\C)$ is closed under finite limits. 
Our key result (see~\cref{cor_closedcolim}) states that this obstacle vanishes if $\pos$ and $\C$ are such that every tame functor $\pos\to\C$ can be obtained as the left Kan extension along a finite subposet inclusion admitting a transfer (see~\cref{def:transfer}).  
Intuitively, a transfer in our setting is collapsing the infinite poset $\pos$ onto a finite one ``reversing'' the finite poset inclusion. 
The key observation (\cref{dsgdfh}) is that  the 
left Kan extension along a poset inclusion admitting a transfer is given by the precomposition with the transfer, which is an operation inherently preserving limits (\cref{prop_limtame}) and hence the closure of $\tame(\pos,\C)$ by limits. More importantly however
it turns out that this transfer assumption also  ensures the inheritance by
$\tame(\pos,\C)$ from $\C$ of both the abelian structure with  minimal resolutions (see~\cref{th_tame_ab})  and  the model structure with minimal confibrant replacements (see~\cref{th_tame_mod}). 

A priori, the transfer requirement may look too restrictive, as it is not obvious that there are meaningful examples of poset functors satisfying it. 
However, we show that there is a rich family of posets satisfying this condition:
the realizations of finite posets of dimension at most 1  (see~\cref{adfgsdfjdghn}).
These include $[0,\infty)$ and, more generally, those posets whose Hasse diagrams are trees, zig-zags, and fences (see \Cref{fig_ex_dim_poset}). 

The dimension-one assumption plays also an important role in our structure theorem.
For   an abelian category  $\C$  in which minimal resolutions exist, we show that in the classical model structure on the category of non-negative chain complexes 
$\ch(\C)$ all objects admit  minimal cofibrant replacements
(see~\cref{cor:ch-min-cofbnt-replmnt}). Furthermore, we provide
a characterization of these cofibrant chain complexes whose homologies have projective dimension at most 1 (see~\cref{adgfgjhjk}). 
This is the basis for our structure theorem. 
The poset dimension requirement in our structure theorem turns out to be essential. 
In \cref{diag_the_counterexample}, we construct an example illustrating it. To do so,
 we develop a gluing technique that we present in \cref{th_gluing}. 
This is a general method to build indecomposables in any abelian functor category, which is in itself of independent interest.

\paragraph{Related work.} 
The works \cite{realisations_posets,bcw,dwyerspalinski} define a model structure on chain complexes indexed by upper semi-lattices, non-negative reals, and finite posets, respectively. 
Our work extends those results providing a model structure to a new class of posets: realizations of finite posets of dimension at most $1$. 

Minimality is a classical notion that has been intensively studied in homotopy theory (see, e.g., \cite{deLyra,halperin,moore,quillen,sullivan1}),  in representation theory (see, e.g. \cite{auslander_reiten} and references therein), and in algebra (see, e.g., \cite{weibel} and references therein), although this list is far from being exhaustive.

Homotopy theory and homological algebra have been also the subjects of discussion in the context of applied topology. 
In \cite{BlumbergLesnick}, the authors use a model structure on the category of parametrized topological spaces to study the stability of homological invariants. 
The authors of \cite{bubemili2021} develop several results of classical homological algebra in the setting of persistence modules, i.e. (tame) functors from a poset to the category of (finite-dimensional) vector spaces. 
In \cite{hess2023persistent}, the authors present a study of rational homotopy theory for parametrized chain complexes. 
All these works focus on aspects that are fascinating but orthogonal to our goals. 

A different, more general definition of tameness has been presented in \cite{miller2020homological}. 
However, this definition is too general for our goals, as the resulting category is not abelian.

An application of the use of vector space valued functors indexed by posets of dimension 1  is given in \cite{MR4468593}. 
There, merge trees are used to index functors which encode the evolution of homologies of connected components of sublevel sets of a function.

A gluing result for indecomposables in functor categories, related to our~\cref{th_gluing}, however valid under different assumptions,   is also presented in \cite[Lemma 14]{BauerScoccola}.

\section{Minimality, abelianity, and model structures}
In this section, we present several categorical concepts that play an important role in this article.

\subsection{Minimality}\label{sec_mini}
Let $\C$ be a category and $f\colon X\to Y$ and $p\colon W\to Y$ be  morphisms in $\C$ with the same codomain. 
The symbol $\text{mor}_{\C\downarrow_Y}(f,p)$ is used to denote the set of morphisms $c\colon X\to W$ in $\C$ for which
$pc=f$. The  elements
of  $\text{mor}_{\C\downarrow_Y}(f,f)$
are called 
\define{endomorphisms} of $f$. A morphism $f$ is called \define{minimal}
if all its endomorphisms are isomorphisms in $\C$.

To  factorize  a morphism $f\colon X\to Y$  means to express it as a composition $f= pc$ of $c\colon X\to W$ followed by $p\colon W\to Y$.
Such a factorization is called a \define{minimal factorization}
if every morphism $\varphi$ in $\C$ making the following diagram commute is an isomorphism
\[
\begin{tikzcd}
X\arrow[r, "c"]\arrow[d, "c"'] & W\arrow[d, "p"]\\
W\arrow[r, "p"]\arrow[ur, "\varphi"] & Y
\end{tikzcd}
\]

We often identify a factorization $f = pc$ with the element $c$ in
$\text{mor}_{\C\downarrow_Y}(f,p)$.

\subsection{Abelian categories and minimal projective resolutions}\label{subs_abmin}
A category $\C$ is \define{abelian}~\cite{maclane} if
\begin{itemize}
\item[(AB0)] It is additive: finite products and coproducts exist; there is the zero object, $0$ (which is both initial and terminal); morphism sets have abelian group structures such that the zero is given by the unique morphism that factors through the zero object, and the composition is bilinear.
\item[(AB1)] Every morphism has a kernel and a cokernel.
\item[(AB2)] Every monomorphism is the kernel of its cokernel, and every epimorphism is the cokernel of its kernel.
\end{itemize}

An object $P_0$ in an abelian category $\C$ is projective if, for every $f\colon P_0\to Y$ and every epimorphism $p\colon W\to Y$, $\text{mor}_{\C\downarrow_Y}(f,p)$ is non-empty.
An abelian category $\C$ has \define{enough projectives} if, for every object $X$ in $\C$, there is an epimorphism $P_0\to X$ with $P_0$ projective. 
Such an epimorphism is called a \define{(projective) cover} of $X$.
An exact sequence $\dots\to P_n\to \dots\to P_0\to X\to 0$ in $\C$ (briefly, $P\to X$) is called a \define{projective resolution} of $X$ if every $P_i$ is projective in $\C$. 
An abelian category has enough projectives if, and only if, each of its objects has a projective resolution.

Let $P\to X$ be a projective resolution. 
The smallest integer $n$ (if it exists) for which
$P_k=0$, for every $k>n$, is called the \define{length} of the resolution.
If such an  $n$ does not exist, then the resolution is said to be infinite. 
The smallest length of all projective resolutions of $X$ is called the \define{projective dimension} of $X$ and is denoted by $\text{proj-dim}(X)$. Recall that the following equivalences hold:
\begin{itemize}
    \item $\text{proj-dim}(X)=0$ if, and only if, $X$ is projective.
    \item $\text{proj-dim}(X)\leq 1$ if, and only if, for every projective
    resolution $P\to X$,  the kernel of  $P_0\to X $ is projective.
    \item For $n\geq 2$, $\text{proj-dim}(X)\leq n$
    if, and only if, for every projective
    resolution $P\to X$, the kernel of the morphism $P_{n-1}\to P_{n-2}$ is projective.
\end{itemize}

If every object in $\C$ admits a projective resolution, then the supremum of the projective dimensions of objects in $\C$ is called the projective dimension
of $\C$ and is denoted by $\text{proj-dim}(\C)$. 
For example, the projective dimension of the category $\vect$ of vector spaces over a field $\mathbb{F}$ is $0$, as all its objects are projective. 
\medskip

A projective resolution (or cover) $p\colon P\to X$ is  \define{minimal}  (see \cref{sec_mini}) if 
every $\varphi\colon P\to P$ such that $p\varphi=p$ is an isomorphism.
A minimal projective resolution (or cover) of $X$ is unique up to possibly a non-unique isomorphism.
If every object in $\mathcal{C}$ admits a minimal cover,
then $\mathcal{C}$ is said to satisfy the {\bf minimal cover axiom}.

A projective resolution $P\to X$ is minimal if, and only if, the morphisms $P_0\to X$, 
$P_1\to \ker(P_0\to X)$, and
$P_k\to \ker(P_{k-1}\to P_{k-2})$, for all $k\geq 2$, are minimal projective covers. 
This is a consequence of the following basic property of minimal projective covers, which can be seen by composing $\gamma$ with $\gamma^{-1}$ and $\varphi$ with the identity: 

\begin{proposition}\label{min_cover_iso}
If $p\colon P_0\to X$ is a minimal projective cover and $\gamma$ an isomorphism of $X$, then every $\varphi\colon P_0\to P_0$ in $\C$ such that $p\varphi=\gamma p$ is also an isomorphism.   
\end{proposition}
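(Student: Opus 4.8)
The plan is to reduce everything to the defining property of a minimal cover, namely that any $\psi\colon P_0\to P_0$ with $p\psi=p$ is an isomorphism. The proposition is exactly the generalization of this property from $\gamma=\id$ to an arbitrary automorphism $\gamma$ of $X$, so the strategy is to manufacture, out of $\gamma$, a companion endomorphism $\psi$ of $P_0$ and then exhibit $\varphi$ as having both a left and a right inverse built from $\psi$.

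First I would invoke projectivity of $P_0$. Since a cover is by definition an epimorphism, $p\colon P_0\to X$ is epi, and applying the projectivity of $P_0$ to the morphism $\gamma^{-1}p\colon P_0\to X$ and the epimorphism $p$ yields a lift $\psi\colon P_0\to P_0$ with $p\psi=\gamma^{-1}p$. With $\psi$ in hand, I would compute the two composites $\varphi\psi$ and $\psi\varphi$ and check that each is an endomorphism of $p$ in the sense of \cref{sec_mini}. Indeed, using $p\varphi=\gamma p$ and $p\psi=\gamma^{-1}p$ together with $\gamma\gamma^{-1}=\gamma^{-1}\gamma=\id_X$, one gets $p(\varphi\psi)=(\gamma p)\psi=\gamma(\gamma^{-1}p)=p$ and, symmetrically, $p(\psi\varphi)=(\gamma^{-1}p)\varphi=\gamma^{-1}(\gamma p)=p$. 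Hence by minimality of the cover both $\varphi\psi$ and $\psi\varphi$ are isomorphisms of $P_0$.

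Finally I would conclude with the elementary observation that a morphism admitting both a left and a right inverse is an isomorphism: invertibility of $\varphi\psi$ gives $\varphi$ the right inverse $\psi(\varphi\psi)^{-1}$, invertibility of $\psi\varphi$ gives $\varphi$ the left inverse $(\psi\varphi)^{-1}\psi$, and these two one-sided inverses necessarily coincide, so $\varphi$ is an isomorphism. I do not anticipate a genuine obstacle in this argument; the only point that requires care is the very first one, namely that the lift $\psi$ exists, which is precisely where the projectivity of $P_0$ (and the fact that covers are epimorphisms) is used, and that the bookkeeping with $\gamma$ and $\gamma^{-1}$ produces honest endomorphisms of $p$ rather than of $\gamma p$. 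This is exactly the manipulation indicated by the remark preceding the statement.
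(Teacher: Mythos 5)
Your proof is correct and is essentially the paper's own argument: the paper disposes of this proposition with the one-line remark that it ``can be seen by composing $\gamma$ with $\gamma^{-1}$ and $\varphi$ with the identity'', which is precisely your construction of the lift $\psi$ with $p\psi=\gamma^{-1}p$ and the observation that $\varphi\psi$ and $\psi\varphi$ are then endomorphisms of $p$ over $\id_X$, hence isomorphisms by minimality. Nothing is missing; your write-up simply makes explicit the lifting step (using projectivity of $P_0$ and that covers are epimorphisms) and the two-sided-inverse conclusion.
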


It follows that $\C$ satisfies the minimal cover axiom if, and only if, all its objects admit a minimal projective resolution.
Furthermore, if $X$ admits a minimal projective resolution, then its length coincides with the projective dimension of $X$.
\medskip

A factorization  $f=pc$ in an abelian category is called a \define{projective factorization}
if $c$ is a monomorphism with projective cokernel and $p$ is an epimorphism. 
An abelian category $\C$ is said to satisfy the \define{minimal projective factorization axiom} if every morphism in $\C$ has a minimal (see \cref{sec_mini}) projective factorization.

The abelian structure can be exploited to show that the minimal projective factorization axiom is equivalent to the minimal cover axiom:

\begin{proposition}\label{min_cover_equi_min_monoepi}
An abelian category $\C$ satisfies the minimal cover axiom if, and only if, it satisfies the minimal projective factorization axiom. Explicitly, when it exists, the minimal projective factorization of $f\colon X\to Y$ in $\C$ is given by $f=\pi c$ where $c= [\begin{smallmatrix} \id\\ 0\end{smallmatrix}]\colon X\to X\oplus P$, $\pi= [\begin{smallmatrix} f & p \end{smallmatrix}]\colon X\oplus P\to Y$, and $p\colon P\to Y$ is the lift of the minimal cover $P\to \coker(f)$
along the quotient $Y\to \coker(f)$. 
\end{proposition}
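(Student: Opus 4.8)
The plan is to establish the two implications separately. I would dispatch the direction ``minimal projective factorization axiom $\Rightarrow$ minimal cover axiom'' first, as it needs essentially no computation: given an object $X$, apply the factorization axiom to the unique morphism $0\to X$ out of the zero object. In any projective factorization $0\xrightarrow{c}W\xrightarrow{p}X$ the cokernel of $c\colon 0\to W$ is $W$ itself, so $W$ is forced to be projective, and $p$ is an epimorphism; hence $p\colon W\to X$ is a projective cover of $X$. In the minimality square the top and left maps are both the unique morphism $0\to W$, so the hypothesis $\varphi c=c$ is vacuous and minimality of the factorization collapses to the statement that every $\varphi\colon W\to W$ with $p\varphi=p$ is an isomorphism, i.e.\ exactly that $p$ is a minimal cover. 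Thus every object receives a minimal cover.

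For the converse I would verify that the explicit construction in the statement works. Starting from $f\colon X\to Y$, form the quotient $q\colon Y\to\coker(f)$, use the minimal cover axiom to choose a minimal cover $\bar p\colon P\to\coker(f)$, and lift it through the epimorphism $q$ using projectivity of $P$ to obtain $p\colon P\to Y$ with $qp=\bar p$. With $c=[\begin{smallmatrix}\id\\0\end{smallmatrix}]$ and $\pi=[\begin{smallmatrix}f&p\end{smallmatrix}]$ one has $\pi c=f$, and $c$ is a split monomorphism whose cokernel is the projective object $P$. So the only point needing argument for this to be a projective factorization is that $\pi$ is an epimorphism. I would show this by proving $\coker(\pi)=0$: writing $r\colon Y\to\coker(\pi)$, the relation $r\pi=0$ unpacks to $rf=0$ and $rp=0$; the first produces a factorization $r=\bar r q$, and then $rp=0$ reads $\bar r\bar p=0$, forcing $\bar r=0$ since $\bar p$ is an epimorphism, whence $r=0$.

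It then remains to check that $f=\pi c$ is a minimal factorization. I would represent an endomorphism $\varphi$ of the factorization as a $2\times 2$ matrix acting on $X\oplus P$. The constraint $\varphi c=c$ fixes its first column to $[\begin{smallmatrix}\id\\0\end{smallmatrix}]$, so $\varphi=[\begin{smallmatrix}\id&b\\0&e\end{smallmatrix}]$ for some $b\colon P\to X$ and $e\colon P\to P$, while $\pi\varphi=\pi$ reduces to $fb+pe=p$. Post-composing with $q$ and using $qf=0$ yields $\bar p e=\bar p$, so $e$ is an isomorphism because $\bar p$ is a minimal cover; an upper-triangular endomorphism with invertible diagonal entries is invertible, so $\varphi$ is an isomorphism, proving minimality.

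The main obstacle is the converse direction, and within it the two algebraic verifications just sketched: that $\pi$ is an epimorphism and that the factorization is minimal. These are the steps where the abelian axioms and the chosen minimal cover of $\coker(f)$ genuinely enter — the first through the cokernel computation and the surjectivity of $\bar p$, the second through the triangularity of $\varphi$ and the defining property of the minimal cover $\bar p$. Everything else is formal bookkeeping about direct sums and universal properties.
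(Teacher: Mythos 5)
Your proof is correct and follows essentially the same route as the paper: the easy direction via the factorization of $0\to X$, and the converse via the same explicit construction with the $2\times 2$ matrix argument (first column pinned by $\varphi c=c$, then $\bar p e=\bar p$ after post-composing with $q$, so $e$ is invertible by minimality of the cover). You additionally spell out two points the paper dismisses as clear — that $\pi$ is an epimorphism (via the cokernel computation) and the details of the sufficient direction — which is a faithful completion rather than a different argument.
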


\begin{proof}
The sufficient condition is straightforward, as the minimal cover of an object $X$ in $\C$ can be obtained by taking the minimal projective factorization of the morphism $0\to X$. 

To prove the necessary condition, let us consider a morphism $f\colon X\to Y$ and the minimal projective cover $p'\colon P\to \coker(f)$. 
Because $P$ is projective and $Y\to \coker(f)$ is an epimorphism, we can lift $p'$ to $p\colon P\to Y$.
The morphisms $c= [\begin{smallmatrix} \id\\ 0\end{smallmatrix}]\colon X\to X\oplus P$ and $\pi= [\begin{smallmatrix} f & p \end{smallmatrix}]\colon  X\oplus P\to Y$ form a factorization of $f$. 
It is clear that $c$ is a monomorphism, $\pi$ is an epimorphism, and $\coker(c)$ is projective. 
To show that this factorization is minimal, consider an endomorphism $\varphi\colon X\oplus P\to X\oplus P$ commuting with the factorization. 
We can write $\varphi = [\begin{smallmatrix} \alpha & \beta \\ \gamma & \delta\end{smallmatrix}]$.
Since $\varphi c = c$, we have $\alpha = \id$ and $\gamma = 0$. 
Finally, by the minimality of $p'\colon P\to \coker(f)$, we have that $\delta$ is an isomorphism, and, hence, $\varphi$ is also an isomorphism. 
\end{proof}

Since 
the minimal cover axiom is equivalent to  the
minimal projective factorization axiom, in the rest of the paper we focus on the minimal cover axiom.

\subsection{Model categories and minimal cofibrant replacements}\label{subs_modcat}
A model structure on a category $\C$ is given by three distinguished classes of morphisms: weak equivalences, fibrations, and cofibrations, subject to the following axioms:
\begin{itemize}
\item[(MC1)] All finite limits and colimits exist in $\C$.
\item[(MC2)] If $f$ and $g$ are morphisms in $\C$ such that $g f$ is defined, and if two of the three morphisms $f,\ g, \ g f$ are weak equivalences, then so is the third.
\item[(MC3)] If $f$ is a retract of $g$ and $g$ is a weak equivalence, fibration, or cofibration, then so is $f$.
\item[(MC4)] There exists a lift in every commutative square whose left vertical map is a cofibration, the right vertical map is a fibration, and either of them is also a weak equivalence.
\item[(MC5)] Every morphism $f$ in $\C$ can be factorized as $\pi c$, where $\pi$ is a fibration, $c$ is a cofibration, and either of them is also a weak equivalence.
\end{itemize}

(MC1) implies that every model category has the initial and the terminal objects (denoted by $\varnothing$ and $\ast$ respectively), and every morphism has a kernel and a cokernel.

A factorization $f=\pi c$ of a morphism in a model category, where $c$ is a cofibration and $\pi$ is a fibration and a weak equivalence, is called a \define{cofibrant factorization}. 
The fibration and weak equivalence $\pi\colon C\to X$  in a cofibrant factorization of $\varnothing \to X$ is called a \define{cofibrant replacement} of $X$.
If $\varnothing \to X$ is a cofibration,
the object $X$ is called \define{cofibrant}.
\medskip

If every morphism $f\colon X\to Y$ in a model category $\C$ admits a  
minimal cofibrant factorization, then we say that $\C$ satisfies the \define{minimal cofibrant factorization axiom}.
The fibration and weak equivalence $\pi\colon C\to X$ in a minimal cofibrant factorization of $\varnothing \to X$ is called the \define{minimal cofibrant replacement} of $X$. 
Both minimal cofibrant factorizations and minimal cofibrant replacements are unique up to possibly not unique isomorphisms.
We suspect that, differently from what happens for minimality in abelian categories, the existence of a minimal cofibrant replacement for all objects in $\C$ does not imply that $\C$ satisfies the minimal cofibrant factorization axiom. 
\medskip

Cofibrant objects in a model category, similarly to projective objects in an abelian category, tend to be more manageable.
In particular, there are
relevant model categories in which all cofibrant indecomposables can be described and enumerated, in contrast to arbitrary indecomposable objects.
We, therefore, think about a minimal cofibrant replacement as a simplifying invariant (see for example~\cite{bcw}; note that, there, minimal cofibrant replacements are called  {\em minimal covers}).

\section{Minimality in chain complexes}\label{sec_chain}
In this section, we recall the abelian and model structures on the category of $\ch(\C)$ of non-negative chain complexes in $\C$, for $\C$ an abelian category.  
We aim to discuss minimality in these structures.
\medskip

For a chain complex $X$, the morphism
$X_n\to X_{n-1}$ is called its ($n$-th) boundary and is denoted by $\partial_n$, or simply $\partial$.
We identify $\C$ with the full subcategory of $\ch(\C)$ given by the chain complexes which are zero in positive degrees and hence called \textbf{complexes concentrated in degree $0$}. 
A projective resolution $P\to X$ in $\C$ can be seen as an example of a morphism between the non-negative chain complex $P$ and the complex $X$ concentrated in degree $0$.
\medskip 

For $X$ in $\ch(\C)$, the symbol $S(X)$ denotes the suspension of $X$, which is a chain complex such that $S(X)_0 = 0$ and $S(X)_i = X_{i-1}$ for $i>0$,
with  $\partial_1\colon S(X)_1=X_0\to 0=S(X)_0$
being the zero morphism, and  $\partial_{i+1}\colon S(X)_{i+1}=X_{i}\to X_{i-1}=S(X)_i$ given by
$\partial_i\colon X_{i}\to X_{i-1}$ if $i>0$.
For a natural number $n$, the symbol $S^n(X)$
denotes either $X$, if $n=0$, or the application of the suspension operation $n$ times to $X$, if $n>0$.

Let $A$ be an object in $\C$. 
The \define{sphere} in degree $n$ on   $A$ is the chain complex $S^n(A)$. 
Explicitly, $S^n(A)$ is  equal to $A$ in degree $n$, and $0$ otherwise. 
The chain complex corresponding to $A$ concentrated in degree $0$ is also denoted by $D^0(A)$ and called the \define{disk (on $A$) in degree $0$}. 
For an integer $n\geq 1$, the \define{disk (on $A$) in degree $n$}, denoted by the symbol $D^n(A)$, is by definition the chain complex which is zero in degrees different from $n$ and $n-1$ and whose $n$-th boundary is given by $\partial_n=(\text{id}\colon A\to A)$. 
When the degree does not need to be specified, a disk on $A$ is simply denoted by $D(A)$. 
Note that if, $n\geq 1$, then $D^n(A)=S^{n-1}(D^1(A))$.

\subsection{Minimal projective covers of chain complexes}\label{subsec:chain}
In this section, we discuss how the considered properties and constructions in the abelian category $\C$ translate when we move to the category $\ch(\C)$.
\medskip 

It is well-known (see, e.g., \cite{weibel}) that since $\C$ is abelian, then so is  $\ch(\C)$. 
Even though $\C$ is guaranteed to be closed only under finite direct sums,  $\ch\left(\C\right)$ is closed under direct sums of possibly infinite, but of finite type, families: 
a family of objects in $\ch(\C)$ is called \define{of finite type} if in each degree only finitely many of its elements are nonzero. 
\medskip

Let $A$ be an object in $\C$.
The disk $D^n(A)$ has the following property:
for a chain complex $X$, the restriction to the degree $n$ function, $\text{mor}_{\ch(\C)}(D^n(A), X)\to 
\text{mor}_{\C}(A, X_n)$, is a bijection.
This property implies that, if $A$ is a projective object in $\C$, then $D^n(A)$ is a projective object in $\ch(\C)$ (see also~\cite{dwyerspalinski}). 
All finite type direct sums of such disks 
are therefore also projective objects in $\ch(\C)$.
It turns out  that all projective objects in 
$\ch(\C)$ are of this form. We refer to~\cite{dwyerspalinski, weibel} for the proof of the following fact:

\begin{proposition}\label{prop:projective_characterization}
The following statements about an object $X$ in  $\ch(\C)$ are equivalent:
\begin{itemize}
\item $X$ is projective;
\item $H_0(X)$ and $X_n $ are projective in $\C$ for all $n\geq 0$, and $H_n(X)=0$ for all $n\geq 1$;
\item $X$ is isomorphic to $\bigoplus _{i\in I}D(A_i)$, where $A_i$ is projective  in $\C$ for all $i$ in $I$.
\end{itemize}
\end{proposition}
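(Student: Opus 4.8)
The plan is to prove the three statements equivalent cyclically, in the order $(3)\Rightarrow(1)\Rightarrow(2)\Rightarrow(3)$, using throughout the elementary adjunctions that relate $\ch(\C)$ to $\C$ via degreewise evaluation and via the disk and sphere functors. The one structural fact I will lean on repeatedly is the standard principle that a left adjoint preserves projectives whenever its right adjoint is exact: if $L\dashv R$ with $R$ exact, then for projective $P$ a lifting problem for $LP$ transports across the adjunction to a lifting problem for $P$ against the epimorphism $R$ produces, which is solvable by projectivity of $P$.

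The implication $(3)\Rightarrow(1)$ is immediate and is essentially recorded in the text preceding the statement. Since the family $\{D(A_i)\}_{i\in I}$ has a direct sum lying in $\ch(\C)$, it is of finite type, and each $D(A_i)=D^{n_i}(A_i)$ with $A_i$ projective is a projective object of $\ch(\C)$ by the property of disks recalled above. It then suffices to note that a finite-type direct sum of projectives is projective: a morphism out of the coproduct is determined by its restrictions to the summands, each restriction lifts against a prescribed epimorphism, and the lifts assemble into a global lift through the universal property of the coproduct.

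For $(1)\Rightarrow(2)$ I would verify the three conditions separately through adjunctions. First, the natural bijection $\text{mor}_{\ch(\C)}(X, D^{n+1}(A))\cong\text{mor}_{\C}(X_n, A)$ exhibits the evaluation functor $X\mapsto X_n$ as left adjoint to $D^{n+1}(-)$; the latter is exact, since it applies a short exact sequence verbatim in degrees $n$ and $n+1$ and is zero elsewhere, so evaluation preserves projectives and each $X_n$ is projective. Likewise $\text{mor}_{\ch(\C)}(X, D^0(A))\cong\text{mor}_{\C}(H_0(X), A)$ exhibits $H_0$ as left adjoint to the exact functor $D^0(-)$ of concentration in degree $0$, whence $H_0(X)$ is projective. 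Finally, for the vanishing of $H_n(X)$ with $n\geq1$, I would use the counit $\bigoplus_{n}D^n(X_n)\to X$, whose degree-$m$ component is $[\,\id_{X_m}\ \ \partial_{m+1}\,]\colon X_m\oplus X_{m+1}\to X_m$ and is therefore an epimorphism; projectivity of $X$ splits it, so $X$ is a retract of $\bigoplus_n D^n(X_n)$. As $D^n(X_n)$ is acyclic for $n\geq1$ and $D^0(X_0)$ has homology only in degree $0$, the source is acyclic in positive degrees, and since $H_n$ is additive and hence preserves retracts, $H_n(X)$ is a retract of $0$ for $n\geq1$.

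The implication $(2)\Rightarrow(3)$ is where the real work lies, and I expect it to be the main obstacle. Writing $Z_n$ and $B_n$ for cycles and boundaries, the hypotheses give $Z_n=B_n$ for $n\geq1$, hence short exact sequences $0\to B_n\to X_n\to B_{n-1}\to 0$ for $n\geq1$, together with $0\to B_0\to X_0\to H_0(X)\to 0$. Starting from projectivity of $H_0(X)$, the last sequence splits and exhibits $B_0$ as a summand of the projective $X_0$, hence projective; feeding this into the next sequence splits it and makes $B_1$ projective, and inductively every $B_n$ is projective and every sequence splits. The plan is then to choose, for each $n\geq1$, a section $\sigma_n\colon B_{n-1}\to X_n$ of the corestriction of $\partial_n$, and to assemble from the inclusions $B_{n-1}\hookrightarrow X_{n-1}$ and these sections an isomorphism $X\cong D^0(H_0(X))\oplus\bigoplus_{n\geq1}D^n(B_{n-1})$. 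The delicate point, and the one I would spend the most care on, is checking that the chosen splittings are mutually compatible so that the boundary maps of $X$ become exactly the identity boundaries of the disks; this is precisely what upgrades the degreewise splittings $X_n\cong B_n\oplus B_{n-1}$ from a decomposition of the underlying graded object to an isomorphism of chain complexes.
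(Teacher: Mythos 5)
Your proof is correct, but there is essentially nothing in the paper to compare it against: the authors only establish the implication from the third statement to the first in the text preceding the proposition (disks on projectives are projective via the bijection $\text{mor}_{\ch(\C)}(D^n(A),X)\cong\text{mor}_{\C}(A,X_n)$, and finite type direct sums of projectives are projective --- exactly your first paragraph), and they explicitly defer the remaining implications to \cite{dwyerspalinski,weibel}. Your cycle of implications supplies the missing argument and it is sound: evaluation at degree $n$ is left adjoint to $D^{n+1}(-)$, $H_0$ is left adjoint to $D^0(-)$, both right adjoints are exact, so both left adjoints preserve projectives; and splitting the degreewise-epimorphic map $\bigoplus_n D^n(X_n)\to X$ exhibits a projective $X$ as a retract of a complex that is acyclic in positive degrees. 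Moreover, the one point you flag as delicate in the passage from the second statement to the third is in fact automatic: no compatibility between the sections $\sigma_n$ chosen in different degrees is needed. Writing $\iota_n\colon B_n\hookrightarrow X_n$ for the inclusion (which for $n\geq 1$ is the inclusion of cycles, since $Z_n=B_n$) and $\phi_n=[\,\iota_n\ \ \sigma_n\,]\colon B_n\oplus B_{n-1}\to X_n$, one has $\partial_n\iota_n=0$ because $\iota_n$ lands in cycles, and $\partial_n\sigma_n=\iota_{n-1}$ because $\partial_n$ factors as the corestriction $X_n\to B_{n-1}$ (of which $\sigma_n$ is a section) followed by the inclusion $B_{n-1}\hookrightarrow X_{n-1}$; hence $\phi_{n-1}^{-1}\partial_n\phi_n=\bigl[\begin{smallmatrix}0&\id\\0&0\end{smallmatrix}\bigr]$ for \emph{any} choice of sections, and this is precisely the differential of $D^0(H_0(X))\oplus\bigoplus_{n\geq1}D^n(B_{n-1})$ in degree $n$; the same computation in degree $1$, with $\phi_0$ built from the inclusion $B_0\hookrightarrow X_0$ and a section of $X_0\twoheadrightarrow H_0(X)$, completes the identification. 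So your degreewise splittings assemble into an isomorphism of chain complexes with no further adjustment, and the proof is complete as planned.
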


If $\C$ has enough projectives, then so does $\ch(\C)$. For example, a projective cover of a chain complex $X$ can be constructed by taking direct sums of disks on the projective covers of  $X_n$, for every $n\geq 0$. However, even if these are
minimal in $\C$, the obtained cover of $X$ may fail to be minimal in $\ch(\C)$. To obtain the minimal cover of $X$ in $\ch(\C)$ we need a slightly different construction.

\begin{proposition}\label{prop:minimal_cover}
If $\C$ is an abelian category satisfying the minimal cover axiom, then so is $\ch(\C)$. 
Explicitly, the minimal projective cover of a chain complex $X$ is isomorphic to $\bigoplus_{n\ge 0} D^{n}(P_n)$,
where $P_n$ is the minimal projective cover of $\coker(\partial_{n+1})$ in $\C$.
\end{proposition}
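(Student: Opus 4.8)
The plan is to produce the claimed object $Q \coloneqq \bigoplus_{n\ge 0} D^n(P_n)$ together with an explicit augmentation $q\colon Q \to X$, and then to check in turn that $Q$ is projective, that $q$ is an epimorphism, and that $q$ is minimal. Since the minimal cover axiom entails enough projectives, each cokernel $\coker(\partial_{n+1})$ admits a minimal projective cover $p_n\colon P_n \to \coker(\partial_{n+1})$ in $\C$ (note that for $n=0$ this is the minimal cover of $\coker(\partial_1)=H_0(X)$). Projectivity of $P_n$ then lets me lift $p_n$ along the canonical epimorphism $\pi_n\colon X_n \twoheadrightarrow \coker(\partial_{n+1})$ to a map $\widetilde p_n\colon P_n \to X_n$ with $\pi_n \widetilde p_n = p_n$; these lifts are the data out of which $q$ is assembled.

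In degree $k$ one has $Q_k = P_k \oplus P_{k+1}$ (the degree-$k$ copy of $D^k(P_k)$ together with the bottom copy of $D^{k+1}(P_{k+1})$), and the boundary of $Q$ is $\partial^Q_k = [\begin{smallmatrix} 0 & 0 \\ \id & 0\end{smallmatrix}]$. I would define $q$ degreewise by $q_k \coloneqq [\begin{smallmatrix} \widetilde p_k & \partial_{k+1}\widetilde p_{k+1}\end{smallmatrix}]\colon P_k\oplus P_{k+1}\to X_k$. That $q$ is a chain map is immediate from $\partial_k\partial_{k+1}=0$: both $\partial^X_k q_k$ and $q_{k-1}\partial^Q_k$ equal $[\begin{smallmatrix}\partial_k\widetilde p_k & 0\end{smallmatrix}]$. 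For surjectivity I would argue degreewise with subobjects: since $p_k$ is epi, $\image(\widetilde p_k)+\image(\partial_{k+1}) = X_k$; and since $p_{k+1}$ is epi, applying $\partial_{k+1}$ to $\image(\widetilde p_{k+1})+\image(\partial_{k+2})=X_{k+1}$ and using $\partial_{k+1}\partial_{k+2}=0$ gives $\image(\partial_{k+1}\widetilde p_{k+1})=\image(\partial_{k+1})$. Hence $\image(q_k)\supseteq \image(\widetilde p_k)+\image(\partial_{k+1})=X_k$, so $q$ is an epimorphism. Projectivity of $Q$ is free: it is a finite-type direct sum of disks on projectives, so \cref{prop:projective_characterization} applies.

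The heart of the argument is minimality. Given $\varphi\colon Q\to Q$ with $q\varphi=q$, write $\varphi_k=[\begin{smallmatrix} a_k & b_k\\ c_k & d_k\end{smallmatrix}]$. Comparing $\partial^Q_k\varphi_k$ with $\varphi_{k-1}\partial^Q_k$ forces $b_k=0$ for all $k$ and $d_{k}=a_{k+1}$. The key move is to postcompose the identity $q_k\varphi_k=q_k$ with the quotient $\pi_k$: every term factoring through $\partial_{k+1}$ is annihilated, since $\image(\partial_{k+1})=\ker\pi_k$, so what survives is exactly $p_k a_k = p_k$. Minimality of the cover $p_k$ then makes $a_k$ an isomorphism, and because $d_k=a_{k+1}$ each $d_k$ is an isomorphism as well. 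Thus every $\varphi_k$ is lower triangular with invertible diagonal, hence invertible, and $\varphi$ is an isomorphism of chain complexes. This proves that $q$ is a minimal projective cover, and therefore that $\ch(\C)$ satisfies the minimal cover axiom.

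I expect the main obstacle to be this minimality step, and specifically the observation that makes it go through: projecting onto $\coker(\partial_{k+1})$ annihilates precisely the cross-terms coming from the disks' identity boundaries, decoupling the degrees and reducing minimality in $\ch(\C)$ to the minimality of each $p_k$ in $\C$. Getting the degreewise formula for $q$ right—so that it is simultaneously a chain map and surjective—is the other delicate point, since the naive degreewise cover by disks on minimal covers of the $X_n$ need not be minimal, as already flagged before the statement. Finally, I would phrase all subobject manipulations (sums and images of subobjects, and $\image\partial_{k+1}=\ker\pi_k$) so as to be valid in an arbitrary abelian category, either directly in the subobject lattice or by invoking the Freyd--Mitchell embedding to license the element-level reasoning.
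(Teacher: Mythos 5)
Your proposal is correct and follows essentially the same route as the paper: you build the identical complex $\bigoplus_{n\ge 0}D^n(P_n)$ with the identical augmentation (disks mapped via lifts $\widetilde p_n$ of the minimal covers of $\coker(\partial_{n+1})$), and your minimality step—postcomposing $q_k\varphi_k=q_k$ with $\pi_k$ to kill the cross-terms and reduce to $p_ka_k=p_k$—is exactly the paper's argument of passing to cokernels of the horizontal boundary maps. The only (cosmetic) divergence is in proving surjectivity, where the paper invokes the Five Lemma on a diagram with exact rows while you compute directly with sums and images of subobjects; both are sound.
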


\begin{proof}
Let $X$ be an object in $\ch(\C)$. 
For every  $n\geq 0$, consider
the minimal projective cover
$c_n\colon P_n\to \coker(\partial_{n+1})$ in $\C$, and its lift
$\overline{c_n}\colon P_n\to X_n$ along the
quotient $X_n\to \coker(\partial_{n+1})$, which exists
since $P_n$ is projective. 
Let
$D^n(P_n)\to X$ be the unique morphism in $\ch(\C)$ which in degree $n$ is given by $\overline{c_n}$. 
We claim that
the direct sum of these morphisms
$\alpha\colon \bigoplus_{n\geq 0} D^n(P_n)\to X$
is the minimal projective cover of $X$ in $\ch(\C)$.

In each degree $n$, the morphism $\alpha$ fits into the following commutative diagram in $\C$: 
\[
\begin{tikzcd}[ampersand replacement=\&,
row sep=14pt]
\& \&P_{n+1} \arrow["{\scalemath{0.7}{\begin{bmatrix} 1 \\ 0 \end{bmatrix}}}" description, hook]{r}
\arrow[two heads]{d}{c_{n+1}} \& P_{n+1}\oplus P_{n} \arrow[two heads]{r}{\scalemath{0.7}{\begin{bmatrix} 0 & 1 \end{bmatrix}}} \arrow{dd}{\alpha_n} \& P_n \arrow[two heads]{dd}{c_n}
\\
X_{n+2}\arrow{r}{\partial_{n+2}} \&X_{n+1}\arrow[r, two heads]\arrow[rd, two heads]
\arrow[bend right=60]{drr}{\partial_{n+1}}\&
\coker(\partial_{n+2})\arrow[d, two heads]
\\ 
\& \& \image(\partial_{n+1}) \arrow[r, hook] \& X_n \arrow[r, two heads] \& \coker(\partial_{n+1})
\end{tikzcd}
\]
Since the top and bottom rows are exact,
and the external vertical morphisms are epimorphisms, by the Five Lemma $\alpha$ is an epimorphism in every degree. 
The morphism $\alpha$ is therefore a projective cover.

To prove minimality of $\alpha$, consider
an endomorphism $\Phi$ of $\bigoplus_{n\ge 0} D^{n}(P_n)$ for which
$\alpha\Phi=\alpha$. This equality means
commutativity of the following diagram for all $n\geq 0$:
\[
\begin{tikzcd}[ampersand replacement=\&, column sep=.3cm, row sep = 25pt]
\& P_{n+2}\oplus P_{n+1} \arrow["\scalemath{0.6}{\begin{bmatrix} 0 & 1 \\ 0 & 0 \end{bmatrix}}" description, pos=.3]{rrr} \arrow["\alpha_{n+1}" description, dr, two heads] \arrow[swap]{ddl}{\Phi_{n+1}=\scalemath{0.7}{\begin{bmatrix} x_{n+1} & y_{n+1} \\ z_{n+1} & w_{n+1} \end{bmatrix}}} \& \& \& P_{n+1}\oplus P_{n} \arrow["\alpha_{n}" description, dr, two heads] \arrow[crossing over, "\Phi_{n}=\scalemath{0.7}{\begin{bmatrix} x_{n} & y_{n} \\ z_{n} & w_n \end{bmatrix}}" swap,pos=.3]{ddl}
\&
\\
\& \& X_{n+1} \arrow["\partial_{n+1}" description, pos=0.2, rrr] \& \& \& X_n 
\\
P_{n+2}\oplus P_{n+1} \arrow["\alpha_{n+1}" description, urr, two heads] \arrow["\scalemath{0.7}{\begin{bmatrix} 0 & 1 \\ 0 & 0 \end{bmatrix}}" description, pos=.5]{rrr} \& \& \& P_{n+1}\oplus P_{n} \arrow["\alpha_{n}" description,urr, two heads] \arrow[from=uur, crossing over] \&
\end{tikzcd} 
\]
Using this commutativity, by direct verification, we obtain the  equalities 
$z_n=0$ and $w_{n+1}=x_n$ for all $n\geq 0$.
Thus, to show that $\Phi$ is an isomorphism, we need to prove that $w_n$ is an isomorphism for all $n\geq 0$. 
This is a consequence of
the minimality of $c_n$ and the commutativity of the following triangle which is obtained by 
taking the cokernels of the horizontal 
morphisms in the above diagram:
\[
\begin{tikzcd}[column sep=.3cm, 
row sep = 12pt]
& P_n \arrow["c_n", pos=.3, dr, two heads]
\arrow[ddl, "w_n"'] & \\
& & \coker(\partial_{n+1}) \\
P_n \arrow["c_n", swap, pos=.5, urr, two heads]
\end{tikzcd} \qedhere
\]
\end{proof}

For example, let $\mathbb{F}$ be a field and consider the sphere $S^n(\mathbb{F})$ in  $\ch(\vect)$. 
According to \cref{prop:minimal_cover}, its minimal projective cover is given by an epimorphism
$D^{n}(\mathbb{F})\to S^n(\mathbb{F})$.  
The kernel of this epimorphism is the sphere 
$S^{n-1}(\mathbb{F})$. We can continue this argument and obtain that  a minimal projective resolution
of $S^n(\mathbb{F})$ in  $\ch(\vect)$ is of the form:
\[
0\to D^0(\mathbb{F})\to D^1(\mathbb{F})\to D^2(\mathbb{F})\to\cdots\to D^{n}(\mathbb{F})\to S^n(\mathbb{F})
\]
Consequently, the projective dimension of  $S^n(\mathbb{F})$ is $n$.
\medskip

We conclude this section with a result about indecomposability:

\begin{proposition}\label{minprojecover_iff_ind}
Let $\C$ be an abelian category satisfying the minimal cover axiom. If an object $X$ in $\C$ is indecomposable, then its minimal projective resolution is indecomposable in $\ch(\C)$.
\end{proposition}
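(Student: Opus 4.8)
The plan is to prove indecomposability by ruling out nontrivial idempotents in the endomorphism ring of the minimal projective resolution $p\colon P\to X$. If $P\cong A\oplus B$ in $\ch(\C)$ with both summands nonzero, then the projection onto $A$ along $B$ is an idempotent endomorphism of $P$ that is neither $0$ nor $\id$; hence it suffices to show that every idempotent $e\colon P\to P$ in $\ch(\C)$ equals $0$ or $\id$. (For the object $X$ itself I will use the reverse implication: since $\C$ is abelian, idempotents split, so the indecomposability of $X$ forces its only idempotents to be $0$ and $\id_X$.)

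First I would push a given idempotent $e$ down to $X$. Applying $H_0$ and using that the augmentation $\epsilon\colon P_0\to X$ identifies $H_0(P)=\coker(\partial_1)$ with $X$ (the higher homologies of a resolution vanishing), I obtain an endomorphism $\bar e\colon X\to X$ induced by $e_0$ on $\coker(\partial_1)$, fitting into the relation $p\,e=\bar e\,p$. Since $e$ is idempotent and $H_0$ is a functor, $\bar e$ is idempotent as well, and by the indecomposability of $X$ we get $\bar e\in\{0,\id_X\}$.

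The key step is to transfer this triviality back up to $P$ by invoking minimality. Suppose first that $\bar e=\id_X$. Then $p\,e=\bar e\,p=p$, so $e$ is an endomorphism of the morphism $p$ in the sense of \cref{sec_mini}. Because $p$ is a minimal projective resolution, every such endomorphism is an isomorphism; being an idempotent isomorphism, $e=\id$. For the remaining case $\bar e=0$, I note that $\id-e$ is again idempotent and satisfies $H_0(\id-e)=\id_X$, so applying the previous case to $\id-e$ gives $\id-e=\id$, that is, $e=0$. In both cases $e$ is trivial, so $P$ is indecomposable.

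I expect no genuine obstacle here; the argument is short, and the only points requiring care are bookkeeping ones. One must check that a chain map $e$ really induces a well-defined idempotent on $H_0(P)\cong X$ with $p\,e=\bar e\,p$, which is routine from $e$ being a chain map and $\epsilon\partial_1=0$, and one must use that minimality of the resolution $p$ means precisely that every endomorphism of $p$ is an isomorphism. The conceptual heart of the proof is the reduction via $\id-e$, which lets minimality---phrased for endomorphisms lying over $\id_X$---also control the idempotents lying over $0$.
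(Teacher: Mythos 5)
Your proof is correct and follows essentially the same route as the paper's: both push the problem down to $H_0(P)\cong X$ via functoriality of homology, invoke the indecomposability of $X$, and then use minimality of the resolution to lift the resulting triviality back up to $P$. The only difference is cosmetic: you phrase decomposability through idempotents (handling the $\bar e=0$ case via $\id-e$), whereas the paper works directly with a direct-sum decomposition $P\oplus P'$, shows $H_0(P')=0$, and applies minimality to the projection onto $P$ --- equivalent formulations of the same argument.
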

\begin{proof}
    Assume $X$ is indecomposable and $P\oplus P'\to X$ is a minimal projective resolution.
    Since $X$, $H_0(P\oplus P')$, and $H_0(P)\oplus H_0(P')$ are isomorphic, we can assume, by indecomposability of $X$, that $H_0(P')=0$. This means that $P\to X$ is also a projective resolution of $X$. 
    Thus by minimality  $P'=0$.
\end{proof}
Note that the reverse implication of~\cref{minprojecover_iff_ind} does not hold. For example, the minimal projective resolution 
\[\begin{tikzcd}
    \cdots\ar[r]& 0 \ar[r]&\mathbb{Z}\ar["\cdot 6", r] & \mathbb{Z}\ar["(\mathrm{mod}\; 6)", r]  &\mathbb{Z}/6\mathbb{Z}\ar[r] & 0
\end{tikzcd}\]
of the decomposable abelian group $\mathbb{Z}/6\mathbb{Z}$ is
indecomposable as a chain complex.

\subsection{Minimal cofibrant replacements of chain complexes}
In \cref{subsec:chain} we discussed how an abelian category structure on $\C$ leads to an abelian category structure on $\ch\left(\C\right)$, and how the  minimal 
cover axiom from $\C$ is inherited by $\ch\left(\C\right)$. 
In this section, we discuss how an abelian category 
structure on $\C$ leads to a model structure on $\ch\left(\C\right)$ and how the minimal cover axiom in $\C$ leads to the minimal cofibrant factorization axiom in $\ch\left(\C\right)$.
\medskip

We begin by recalling the standard model structure on $\ch\left(\C\right)$.

\begin{proposition}(\cite[Th. 7.2]{dwyerspalinski})\label{prop:ch_model}
Let $\C$ be an abelian category.
The following choices of weak equivalences, fibrations, and cofibrations in the category of non-negative chain complexes $\ch(\C)$ satisfy the requirements of a model structure. 
A morphism $f\colon X\to Y$ in $\ch(\C)$ is
\begin{itemize}
    \item a weak equivalence if it is a quasi-isomorphism (induces an isomorphism in homology in all degrees);
    \item A fibration if $f_n\colon X_n\to Y_n$ is an epimorphism for every degree $n\geq 1$; 
    \item A cofibration if $f_n\colon X_n\to Y_n$ is a monomorphism such that $\coker(f_n)$ is projective in $\C$, for every degree $n\geq 0$. 
\end{itemize}
\end{proposition}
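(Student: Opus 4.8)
The plan is to verify the five axioms (MC1)--(MC5) for the stated classes, dispatching the first three quickly and concentrating on (MC4) and (MC5). For (MC1), since $\C$ is abelian so is $\ch(\C)$, and an abelian category is finitely bicomplete. For (MC2), the weak equivalences are exactly the maps inverted by the homology functors $H_n$; as isomorphisms satisfy two-out-of-three in any category and each $H_n$ is functorial, the quasi-isomorphisms inherit it. For (MC3), all three classes are defined degreewise in terms of being a monomorphism, an epimorphism, having projective cokernel, or inducing isomorphisms on homology, and each of these is stable under retracts in an abelian category (a direct summand of a projective is projective by \cref{prop:projective_characterization} applied degreewise); hence the classes are retract-closed.

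Before attacking (MC4) and (MC5) I would record two characterizations. First, a map that is both a fibration and a weak equivalence (a \emph{trivial fibration}) is in fact an epimorphism in every degree, including degree $0$: a short diagram chase lifts a class through $H_0(p)$ and uses surjectivity of $p_1$ to repair the degree-$0$ defect. Consequently $0\to\ker p\to X\to Y\to 0$ is a degreewise short exact sequence, and the long exact homology sequence shows $\ker p$ is acyclic; conversely, any degreewise epimorphism with acyclic kernel is a trivial fibration. Dually, a map that is both a cofibration and a weak equivalence (a \emph{trivial cofibration}) is a degreewise-split monomorphism whose cokernel complex has projective terms and is acyclic, hence by \cref{prop:projective_characterization} is a sum of disks on projectives.

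Next I would prove directly that every cofibration $i\colon A\to B$ has the left lifting property against every trivial fibration $p\colon X\to Y$. Using the degreewise splitting $B_n\cong A_n\oplus Q_n$ with $Q_n=\coker(i_n)$ projective, I would construct the lift by induction on $n$: the obstruction to extending over $Q_n$ is a morphism from the projective $Q_n$ into the object of pairs $(y,z)$ with $y\in Y_n$, $z$ an $(n-1)$-cycle of $X$, and $\partial^Y y=p(z)$, and the acyclicity of $\ker p$ together with surjectivity of $p_n$ (for $n\ge 1$, and $p_0$ in degree $0$) shows the comparison morphism out of $X_n$ onto this object is an epimorphism, so projectivity of $Q_n$ yields the required extension compatible with $\partial$. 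I would then establish the two factorizations (MC5). For the trivial-cofibration/fibration factorization I would take $Z=X\oplus\bigoplus_{n\ge 1}D^n(P_n)$ with epimorphisms $P_n\twoheadrightarrow Y_n$ from projectives: the inclusion $X\hookrightarrow Z$ is a trivial cofibration because the attached disks are acyclic and projective, while the evident map $Z\to Y$ is epi in positive degrees, hence a fibration. For the cofibration/trivial-fibration factorization I would build $Z$ from $X$ by attaching disks on projectives to force surjectivity in all degrees and spheres on projectives mapping to cycles of $Y$ to correct homology, arranging that $Z\to Y$ becomes a degreewise epimorphism with acyclic kernel; organizing this as the small object argument against the generating cofibrations $\{S^{n-1}(P)\hookrightarrow D^n(P)\}$ keeps the bookkeeping clean. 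With (MC5) and the direct lifting in hand, the remaining half of (MC4) follows by the standard retract argument: factor a trivial cofibration $i$ as a disk-attachment (which visibly lifts against fibrations) followed by a fibration, use two-out-of-three to see that fibration is trivial, lift it back across $i$ by the case already proved, and conclude that $i$ is a retract of the disk-attachment, hence lifts against all fibrations.

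I expect the main obstacle to be the cofibration/trivial-fibration factorization in (MC5): unlike the other factorization, it must simultaneously force surjectivity in every degree and turn the map into a quasi-isomorphism, which requires resolving the homology of $Y$ by projectives and attaching the corresponding spheres and disks with carefully twisted differentials (and invoking that $\C$ has enough projectives, as in the cited module-category setting). The recurring technical subtlety throughout (MC4)--(MC5) is the asymmetry of fibrations, which are epimorphisms only in positive degrees; every argument must treat degree $0$ separately, and it is precisely the acyclicity of the relevant kernel or cokernel complex that repairs the degree-$0$ case.
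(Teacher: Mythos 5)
There is no in-paper argument to compare against: the paper justifies \cref{prop:ch_model} purely by citing \cite[Th.~7.2]{dwyerspalinski}, where the result is proved for chain complexes of modules over a ring. Your reconstruction is essentially that standard proof transported to an abelian category, and its outline is sound: the degreewise retract argument for (MC3), the two characterizations (trivial fibrations are epimorphisms in \emph{all} degrees, including degree $0$, with acyclic kernel; trivial cofibrations are degreewise split monomorphisms with acyclic, degreewise projective cokernel), the inductive lift of a cofibration against a trivial fibration through the epimorphism from $X_n$ onto the pullback $Y_n\times_{Y_{n-1}}\mathrm{Z}_{n-1}(X)$ of cycles, the disk-attachment factorization for one half of (MC5), and the retract argument for the remaining half of (MC4) are all exactly right. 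Moreover, your parenthetical that enough projectives must be invoked is a genuine correction to the statement, not a technicality: without it (MC5) fails, since factoring $\varnothing\to D^0(A)$ as a cofibration $\varnothing\to W$ followed by a trivial fibration $W\to D^0(A)$ exhibits $A\cong H_0(W)$ as a quotient of the projective object $W_0$; so in the abelian category of finite abelian groups, whose only projective object is $0$, no such factorization of $\varnothing\to D^0(\mathbb{Z}/2\mathbb{Z})$ exists. The paper only ever applies \cref{prop:ch_model} when the minimal cover axiom holds or when all objects of $\C$ are projective, so nothing downstream is affected, but the hypothesis is needed for the statement as written.

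The one step of your plan that would fail as written is organizing the cofibration/trivial-fibration factorization as a small object argument. A general abelian category has only finite coproducts, and correspondingly $\ch(\C)$ is closed only under direct sums of finite type (as the paper stresses), so the coproduct over all lifting problems against $\{S^{n-1}(P)\hookrightarrow D^n(P)\}$ cannot even be formed, nor is there any smallness available for the domains. Fortunately, the attachment idea you describe can be implemented as a single degreewise induction requiring one projective per degree: having built the factorization $X\to W\to Y$ below degree $n$, use enough projectives to choose an epimorphism $Q_n\twoheadrightarrow Y_n\times_{Y_{n-1}}\mathrm{Z}_{n-1}(W)$ (onto $Y_0$ when $n=0$), set $W_n=X_n\oplus Q_n$, and define the differential and the map to $Y_n$ through this pullback; then $X\to W$ is a degreewise split monomorphism with projective cokernel, and $W\to Y$ is epi onto each such pullback, hence a trivial fibration by your own criterion. (A cosmetic point: citing \cref{prop:projective_characterization} for ``a direct summand of a projective object of $\C$ is projective'' is misplaced, as that is an elementary lifting argument in $\C$ rather than the classification of projective chain complexes.)
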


This choice of cofibrations implies that an object in $\ch(\C)$ is cofibrant if, and only if, it is degreewise projective. 
Thus, by \Cref{prop:projective_characterization}, all projective objects in $\ch(\C)$ are cofibrant, although the converse is not true.
As a consequence, if in $\C$ all objects are projective, then all objects in $\ch(\C)$ are cofibrant, and every object in $\ch(\C)$ is its minimal cofibrant replacement. 
\medskip

Our next goal is to explain how to obtain  minimal cofibrant factorizations in 
$\ch(\C)$ using minimal projective factorizations in $\C$.
We start with a basic construction.
Let $Y$ be a chain complex. Choose an integer $n\geq 0$ and a morphism $p\colon W\to Y_n$ in $\C$. Define 
$\pi_{p}\colon \overline{Y}\to Y$ to be the morphism in $\ch(\C)$  described by the following commutative diagram:
\[
\begin{tikzcd}
\overline{Y}\ar[swap]{d}{\pi_{p}}
\arrow[equal]{r} &\biggl ( 
\cdots \arrow[r, "\partial_{n+3}"] & Y_{n+2}\arrow[r, "\beta"]\arrow[equal]{d}
\arrow[bend left=35, "0" description]{rr}& Q \arrow[r, "\alpha"]\arrow[dr, phantom, "\scalebox{1.5}{\color{black}$\lrcorner$}", pos=.1] \arrow[d,swap, "q"] & W \arrow[r, "\partial_{n}p"]\arrow[d, "p"] & Y_{n-1}\arrow[r]\arrow[equal]{d} &\cdots \biggr )
\\
Y \arrow[equal]{r}&\biggl (\cdots \arrow[r, "\partial_{n+3}"] & Y_{n+2}\arrow[r, "\partial_{n+2}"] & Y_{n+1}\arrow[r, "\partial_{n+1}"] & Y_n \arrow[r, "\partial_{n}"] & Y_{n-1}\arrow[r] &\cdots  \biggr )
\end{tikzcd}
\]
where the indicated square ($\alpha$, $q$, $p$, $\partial_{n+1}$) is a pullback, and  $\beta\colon Y_{n+2}\to Q$ is its mediating morphism.
In particular, $\pi_{p}$ is the identity in all degrees different from $n$ and $n+1$. 
Here are the key properties of this construction:

\begin{lemma}\label{lemma_quasiiso}
    \begin{enumerate}
        \item  For every morphism  $f\colon X\to Y $ in $\ch(\C)$, the restriction to the degree $n$ function
        $\text{\rm mor}_{\ch(\C)\downarrow_Y}(f, \pi_p)\to 
        \text{\rm mor}_{\C\downarrow_{Y_n}}(f_n, p)$ is a bijection.
        \item  If $p\colon W\to Y_n$ is an epimorphism in $\C$, then
        $\pi_p\colon \overline{Y}\to Y$ is a fibration and a weak equivalence  in $\ch(\C)$.
    \end{enumerate}
\end{lemma}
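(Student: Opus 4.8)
The plan is to treat the two statements separately, using the universal property of the pullback $Q$ for part~(1) and a short exact sequence with acyclic kernel for part~(2).

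For part~(1), the key observation is that $\pi_p$ is the identity in every degree except $n$ and $n+1$, so any lift $g\colon X\to\overline{Y}$ of $f$ is forced to satisfy $g_k=f_k$ for $k\neq n,n+1$, while $g_n$ is precisely the element of $\text{mor}_{\C\downarrow_{Y_n}}(f_n,p)$ to which it restricts. Thus the only remaining freedom is $g_{n+1}\colon X_{n+1}\to Q$, and since $Q$ is a pullback, giving such a morphism amounts to giving a pair of morphisms into $Y_{n+1}$ and $W$ that agree after composing with $\partial_{n+1}$ and $p$. The condition $\pi_p g=f$ forces the first to be $f_{n+1}$, and the chain-map condition $\alpha g_{n+1}=g_n\partial^X_{n+1}$ forces the second to be $g_n\partial^X_{n+1}$; these agree over $Y_n$ precisely because $p g_n=f_n$ and $f$ is a chain map. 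This simultaneously yields injectivity (the lift is determined by $g_n$) and surjectivity (any $h$ with $ph=f_n$ produces a unique $g_{n+1}$). The one identity still to be checked is the chain-map condition in degree $n+2$, namely $\beta f_{n+2}=g_{n+1}\partial^X_{n+2}$; I would verify it by composing both sides with $q$ and with $\alpha$ and invoking the universal property of $Q$, using $q\beta=\partial_{n+2}$, $\alpha\beta=0$, and $\partial^X_{n+1}\partial^X_{n+2}=0$.

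For part~(2), I would first check that $\pi_p$ is degreewise an epimorphism: it is the identity outside degrees $n,n+1$, it is $p$ in degree $n$, and in degree $n+1$ it is the pullback projection $q\colon Q\to Y_{n+1}$, which is an epimorphism because the pullback of an epimorphism along any morphism is again an epimorphism in an abelian category. In particular $\pi_p$ is a fibration. Being a degreewise epimorphism, $\pi_p$ fits into a short exact sequence $0\to K\to\overline{Y}\xrightarrow{\pi_p}Y\to0$ in $\ch(\C)$, where $K$ is the degreewise kernel, and by the associated long exact sequence in homology it then suffices to show that $K$ is acyclic.

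The final step is to identify $K$. It vanishes outside degrees $n$ and $n+1$, with $K_n=\ker(p)$ and $K_{n+1}=\ker(q)$, and the standard identification of kernels in a pullback square gives an isomorphism $\alpha\colon\ker(q)\xrightarrow{\sim}\ker(p)$ that coincides with the boundary map $K_{n+1}\to K_n$ of $K$. Hence $K$ is a two-term complex whose only nonzero differential is an isomorphism, so it is acyclic, and $\pi_p$ is a quasi-isomorphism, i.e.\ a weak equivalence. I expect the main obstacle to be the pullback bookkeeping in this last step---verifying that $q$ is epi and that the restriction of $\alpha$ induces an isomorphism $\ker(q)\cong\ker(p)$ compatible with the boundary---since this is where the abelian structure is genuinely used; the chain-map verifications in part~(1) are routine by comparison.
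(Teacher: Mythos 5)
Your proposal is correct, and on part (1) it matches the paper: the paper disposes of that part in one sentence (``a consequence of the universal property of pullbacks''), and your degree-by-degree analysis --- the lift agrees with $f$ outside degrees $n$ and $n+1$, the component into $Q$ is the mediating morphism of the pair $(f_{n+1},\, g_n\partial^X_{n+1})$, and the residual chain-map identity in degree $n+2$ is checked against $q$ and $\alpha$ using $q\beta=\partial_{n+2}$, $\alpha\beta=0$, and $\partial^X_{n+1}\partial^X_{n+2}=0$ --- is exactly the verification hidden in that sentence. On part (2), however, you take a genuinely different route. Both proofs begin by noting that pullbacks preserve epimorphisms in abelian categories, so $\pi_p$ is a degreewise epimorphism and hence a fibration. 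But the paper then proves the quasi-isomorphism claim degree by degree: $H_{n-1}(\pi_p)$ is an isomorphism because $\image(\partial_n p)=\image(\partial_n)$ when $p$ is epi, and $H_n(\pi_p)$, $H_{n+1}(\pi_p)$ are isomorphisms because the pullback square with $p$ epi induces isomorphisms $\ker(\alpha)\to\ker(\partial_{n+1})$ and $\coker(\alpha)\to\coker(\partial_{n+1})$, from which the homology comparisons are assembled by hand. You instead form the short exact sequence $0\to K\to\overline{Y}\xrightarrow{\pi_p}Y\to 0$, identify $K$ as the two-term complex $\ker(q)\xrightarrow{\alpha}\ker(p)$ whose only differential is an isomorphism by the standard pullback identification of kernels, and conclude from the long exact homology sequence. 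Your version is more economical: one elementary pullback lemma plus the long exact sequence replaces three separate degree-wise computations, and the image/cokernel bookkeeping the paper carries out is absorbed automatically. What the paper's argument buys in exchange is that it stays completely explicit in every degree and never invokes the long exact sequence machinery. Both arguments are sound.
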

\begin{proof}
\textit{1.} is a consequence of the universal property of pullbacks. 
As for \textit{2.}, assume $p$ is an epimorphism. 
Since this property is preserved by pullbacks in abelian categories, 
$\pi_p$ is also a degree-wise epimorphism and hence a fibration. 
The morphism $p$ being an epimorphism has the following further consequences. 
First, it implies the equality  $\image(\partial_n p)=\image(\partial_n)$, which gives that  $H_{n-1}(\pi_{p})$ is an isomorphism. 
Second, combined with a general property of pullbacks in abelian categories, it gives us that the induced morphisms $\coker(\alpha)\to \coker(\partial_{n+1})$ and $\ker(\alpha)\to \ker(\partial_{n+1})$ are isomorphisms. 
Consequently so are also the morphisms:
\[
\begin{tikzcd}
     \coker\left(Y_{n+2}\to \ker(\alpha)\right)\ar{d} \\
     \coker\left(Y_{n+2}\to \ker(\partial_{n+1})\right)
 \end{tikzcd}\ \ \ \ \ \text{and}\ \ \ \ \  
 \begin{tikzcd}
 \ker\left(\coker(\alpha)\to Y_{n-1}\right)\ar{d}\\
 \ker\left(\coker(\partial_{n+1})\to Y_{n-1}\right)
 \end{tikzcd}
 \]
As these cokernels and kernels coincide with respectively  the $(n+1)$-st and the $n$-th homology of the two chain complexes $\overline{Y}$ and $Y$, $H_{n+1}(\pi_{p})$ and $H_{n}(\pi_{p})$ are also isomorphisms.

Finally,
since in degrees different from  $n$ and $n+1$, the morphism $\pi_{p}$ is the identity, $H_{i}(\pi_{p})$ is an isomorphism for $i<n-1$ and $i>n+1$. 
\end{proof}

Let us choose a morphism $f\colon X\to Y$ in 
$\ch(\C)$, an integer $n\geq 0$, and a  factorization of $f_n$ in $\C$:

\[\begin{tikzcd}
    X_n\ar{r}{c}
    \ar[bend right=30, "f_n" description]{rr}& W\ar{r}{p} & Y_n
\end{tikzcd}
\]
Let us think about $c$ as an element of 
$\text{\rm mor}_{\C\downarrow_{Y_n}}(f_n, p)$, and  use \cref{lemma_quasiiso}.\textit{1} to get the unique morphism
$\overline{c}\colon X\to \overline{Y}$ in 
$\ch(\C)$ which fits into the following factorization of $f$ and 
whose restriction in degree $n$ is the 
chosen factorization above:
\[
\begin{tikzcd}
    X\ar{r}{\overline{c}}
    \ar[bend right=30, "f" description]{rr}& \overline{Y}\ar{r}{\pi_p} & Y
\end{tikzcd}
\]

\cref{lemma_quasiiso}.\textit{1} has several consequences. 
For example, the fact that restricting to degree $n$ is enough to determine the whole morphism allows us to construct a global inverse from the local inverse in degree $n$. 
In turn, this, together with \cref{min_cover_iso}, gives  the following result:

\begin{corollary}\label{cor_bla}
    If $f_n= p c$ is a minimal factorization in $\C$, then
    $f=\pi_p \overline{c}$ is a minimal factorization in $\ch(\C)$.
\end{corollary}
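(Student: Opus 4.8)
The plan is to unwind the definition of a minimal factorization from \cref{sec_mini} at the level of $\ch(\C)$ and then reduce everything to the assumed minimality in degree $n$, exploiting the rigidity provided by \cref{lemma_quasiiso}.\textit{1}. Concretely, I must show that every endomorphism $\varphi\colon\overline{Y}\to\overline{Y}$ of the factorization $f=\pi_p\overline{c}$ — that is, every chain map satisfying $\varphi\overline{c}=\overline{c}$ and $\pi_p\varphi=\pi_p$ — is an isomorphism in $\ch(\C)$.

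First I would pass to degree $n$. By the construction of $\overline{c}$ and $\pi_p$ one has $\overline{c}_n=c$ and $(\pi_p)_n=p$, so the degree-$n$ component $\varphi_n\colon W\to W$ satisfies $\varphi_n c=c$ and $p\varphi_n=p$. These are exactly the two commutativity conditions making $\varphi_n$ an endomorphism of the factorization $f_n=pc$ in $\C$. Since this factorization is minimal by hypothesis, $\varphi_n$ is an isomorphism; moreover $p\varphi_n^{-1}=p$, so $\varphi_n^{-1}$ is again an endomorphism of $p$ over $Y_n$. (This step is in the same spirit as \cref{min_cover_iso}.)

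Next I would promote this local inverse to a global one using \cref{lemma_quasiiso}.\textit{1} with the source morphism taken to be $\pi_p$ itself. That lemma identifies $\text{mor}_{\ch(\C)\downarrow_Y}(\pi_p,\pi_p)$ with $\text{mor}_{\C\downarrow_{Y_n}}(p,p)$ via restriction to degree $n$, a bijection. Surjectivity yields a chain map $\psi\colon\overline{Y}\to\overline{Y}$ over $\pi_p$ with $\psi_n=\varphi_n^{-1}$. Since $\varphi$ and $\psi$ both lie over $\pi_p$, so do $\varphi\psi$ and $\psi\varphi$, and their degree-$n$ components are $\varphi_n\varphi_n^{-1}=\id_W$ and $\varphi_n^{-1}\varphi_n=\id_W$. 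As $\id_{\overline{Y}}$ also lies over $\pi_p$ and restricts to $\id_W$, injectivity of the same restriction map forces $\varphi\psi=\id_{\overline{Y}}=\psi\varphi$. Hence $\varphi$ is an isomorphism, and the factorization $f=\pi_p\overline{c}$ is minimal.

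I expect the only real obstacle to be resisting the temptation of a naive degreewise argument. In degrees away from $n$ and $n+1$ the map $\pi_p$ is the identity, so $\pi_p\varphi=\pi_p$ pins $\varphi$ down there immediately; but in degree $n+1$ the object $\overline{Y}_{n+1}=Q$ is the pullback appearing in the construction, where no minimality hypothesis is available and the invertibility of $\varphi_{n+1}$ is not transparent. The point of the argument above is precisely that \cref{lemma_quasiiso}.\textit{1} lets me reconstruct the entire inverse from its degree-$n$ value, bypassing any direct analysis of the pullback term.
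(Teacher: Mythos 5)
Your proof is correct and follows essentially the same route the paper sketches for \cref{cor_bla}: reduce to degree $n$, where minimality of $f_n=pc$ makes $\varphi_n$ invertible, then use the bijection of \cref{lemma_quasiiso}.\textit{1} (instantiated with source morphism $\pi_p$) to promote the local inverse to a global morphism $\psi$ over $\pi_p$ and to conclude $\varphi\psi=\psi\varphi=\id$ by injectivity of the restriction map. The paper also cites \cref{min_cover_iso} in its sketch, but your direct appeal to the definition of a minimal factorization makes that reference unnecessary, so nothing is missing.
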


We are now ready to prove:

\begin{theorem}\label{cor:ch-min-cofbnt-replmnt}
If $\C$ is an abelian category satisfying the minimal cover axiom, then the model structure on $\ch(\C)$
described in~\cref{prop:ch_model} satisfies the minimal cofibrant factorization axiom.
\end{theorem}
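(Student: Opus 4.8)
The plan is to produce the factorization one homological degree at a time, bootstrapping the single-degree construction of \cref{lemma_quasiiso} with the minimal projective factorizations that $\C$ enjoys by \cref{min_cover_equi_min_monoepi}, and then to extract minimality from \cref{cor_bla}.

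\emph{Construction.} Fix $f\colon X\to Y$ and process the degrees $n=0,1,2,\dots$ in turn. At stage $n$ I have a factorization $X\xrightarrow{c^{(n)}}Y^{(n)}\xrightarrow{\pi^{(n)}}Y$ with $\pi^{(n)}$ a fibration and weak equivalence and with $(c^{(n)})_k$ a monomorphism with projective cokernel for all $k<n$. I take the minimal projective factorization $g_n=p_nc_n$ in $\C$ of the degree-$n$ component $g_n:=(c^{(n)})_n$ (it exists by \cref{min_cover_equi_min_monoepi}), feed the epimorphism $p_n$ to \cref{lemma_quasiiso} to obtain a fibration and weak equivalence $\pi_{p_n}$, and lift $c_n$ to $\overline{c_n}$ by \cref{lemma_quasiiso}.\textit{1}; setting $\pi^{(n+1)}=\pi^{(n)}\pi_{p_n}$ and $c^{(n+1)}=\overline{c_n}$ advances the stage. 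Since $\pi_{p_n}$ is the identity outside degrees $n$ and $n+1$, every fixed degree is altered only finitely often, so the stages stabilize degreewise to a factorization $f=\pi c$ in which $C_n=X_n\oplus P_n$ (with $P_n$ the projective cokernel produced at stage $n$), $c$ is the degreewise inclusion, and $\pi_n=\begin{bmatrix}f_n&\rho_n\end{bmatrix}$ for suitable $\rho_n\colon P_n\to Y_n$.

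\emph{Cofibrancy.} By construction $c_n$ is the split inclusion $X_n\hookrightarrow X_n\oplus P_n$ with projective cokernel $P_n$, so $c$ is a cofibration by \cref{prop:ch_model}. Each $\pi_{p_n}$ is a fibration and weak equivalence by \cref{lemma_quasiiso}.\textit{2}; in every fixed degree $\pi$ is a finite composite of such maps, hence a degreewise epimorphism, and $H_\ast(\pi)$ is a finite composite of isomorphisms. Thus $\pi$ is a fibration and a weak equivalence, and $f=\pi c$ is a cofibrant factorization.

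\emph{Minimality (the crux).} Let $\varphi\colon C\to C$ satisfy $\varphi c=c$ and $\pi\varphi=\pi$. Writing $\varphi_n$ as a block matrix on $C_n=X_n\oplus P_n$, the relation $\varphi c=c$ forces $\varphi_n=\begin{bmatrix}\id&b_n\\0&e_n\end{bmatrix}$, so $\varphi$ is an isomorphism iff each $e_n\colon P_n\to P_n$ is. Writing $\partial^C_n|_{X_n}=\begin{bmatrix}\partial^X_n\\0\end{bmatrix}$ and $\partial^C_n|_{P_n}=\begin{bmatrix}\mu_n\\\nu_n\end{bmatrix}$, the hypotheses translate into $f_nb_n+\rho_ne_n=\rho_n$ (from $\pi\varphi=\pi$) together with $\nu_ne_n=e_{n-1}\nu_n$ and $\partial^X_nb_n+\mu_ne_n=\mu_n+b_{n-1}\nu_n$ (from $\varphi$ being a chain map). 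The aim is to deduce from these relations and the minimality of the factorizations $g_n=p_nc_n$ that each $e_n$ is an isomorphism, by exhibiting an endomorphism of the minimal factorization $g_n=p_nc_n$ whose $P_n$-block is $e_n$ and invoking \cref{cor_bla}, exactly as in a single degree.

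\emph{Main obstacle.} The difficulty is that $\rho_n$ need not be a monomorphism, so $f_nb_n+\rho_ne_n=\rho_n$ does not by itself present $e_n$ as an endomorphism of the cover $P_n\to\coker(g_n)$. I would resolve this by an induction on $n$ exploiting the acyclicity of $\ker\pi$ (the sequence $0\to\ker\pi\to C\xrightarrow{\pi}Y\to0$ is degreewise short exact, since $\pi$ is a degreewise epimorphism, and $\pi$ is a quasi-isomorphism) together with the degreewise projectivity of $\coker(c)$: these produce homotopy data linking the $b_n$ and $e_n$ across adjacent degrees, which, combined with the three relations above, should yield for each $n$ a morphism $\beta_n$ making $e_n$ the $P_n$-block of an endomorphism of $g_n=p_nc_n$, whereupon minimality forces $e_n$ to be an isomorphism. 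Correctly propagating the cover-minimality through the differential is the step I expect to require the most care.
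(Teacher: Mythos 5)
Your construction coincides with the paper's (iterate the one\-/degree modification of \cref{lemma_quasiiso}, taking at stage $n$ the minimal projective factorization in $\C$ of the mediating map $g_n\colon X_n\to Q_n$ into the current, pullback-modified target), and your cofibrancy and fibration/weak-equivalence verifications are the paper's as well. The genuine gap is the minimality step, which you leave open, and the repair you sketch would not close it. The acyclicity of $\ker\pi$ together with the degreewise projectivity of $\coker(c)$ yields only a null-homotopy of $\varphi-\id$ (which factors through $\coker(c)\to\ker\pi$): writing the $P$-block of the homotopy as $v_n\colon P_n\to P_{n+1}$, this gives $e_n=\id+\nu_{n+1}v_n+v_{n-1}\nu_n$. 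That statement says $\varphi$ is chain homotopic to the identity, which holds for \emph{any} cofibrant factorization, minimal or not, and an endomorphism homotopic to the identity need not be invertible. Nothing in this homotopy data re-uses the minimality of the covers $P_n\to\coker(g_n)$, which is the only hypothesis that can force invertibility, so the detour cannot produce the morphism $\beta_n$ you hope for.

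What you are missing is that your three relations \emph{already} constitute an endomorphism of a minimal factorization --- not of $f_n\colon X_n\to Y_n$, but of $g_n\colon X_n\to Q_n$. Argue by induction on $n$ (for $n=0$ one has $Q_0=Y_0$, and your first relation, projected to $\coker(f_0)$, makes $e_0$ an endomorphism of the minimal cover, hence an isomorphism). By construction $p_n=(\pi_n,\partial^C_n)\colon C_n\to Q_n\subset Y_n\times C_{n-1}$, the factorization $g_n=p_nc_n$ is the minimal projective one, and $\sigma_n\coloneqq p_n|_{P_n}$ is a lift of the minimal cover $\bar\sigma_n\colon P_n\to\coker(g_n)$ (\cref{min_cover_equi_min_monoepi}). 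Let $\gamma_n$ be the endomorphism of $Q_n$ induced by $(\id_{Y_n},\varphi_{n-1})$; it is well defined and an automorphism because, inductively, $\varphi_{n-1}$ is an isomorphism satisfying $p_{n-1}\varphi_{n-1}=\gamma_{n-1}p_{n-1}$, and $\gamma_{n-1}$ fixes the structure map $Y_n\to Q_{n-1}$ (its component to $C_{n-2}$ vanishes). Then your relation $f_nb_n+\rho_ne_n=\rho_n$ is exactly the $Y_n$-component, and your two chain-map relations are exactly the $C_{n-1}$-component, of the single identity $p_n\varphi_n=\gamma_np_n$. Since $\gamma_ng_n=g_n$ (because $\varphi_{n-1}c_{n-1}=c_{n-1}$), $\gamma_n$ descends to an automorphism $\bar\gamma_n$ of $\coker(g_n)$, and projecting the identity to $\coker(g_n)$ kills the $b_n$ term, leaving $\bar\sigma_ne_n=\bar\gamma_n\bar\sigma_n$; now \cref{min_cover_iso} forces $e_n$, hence $\varphi_n$, to be an isomorphism, closing the induction. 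This coupling of $\pi_n$ with the differential through the pullback $Q_n$ is precisely what \cref{cor_bla} packages in a single degree and what the paper's proof invokes for minimality; measuring minimality against $\pi_n\colon C_n\to Y_n$ alone, as you do, is what creates your ``main obstacle''.
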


Our objective is not just to prove the theorem but to give an explicit construction of minimal cofibrant factorizations in $\ch(\C)$ using minimal projective factorizations in $\C$.

\begin{proof}
Let $f\colon X\to Y$ be a morphism in $\ch(\C)$. 
Consider the following commutative diagram, where the indicated squares (\scalebox{1.5}{$\lrcorner$}) are pullbacks and the indicated factorizations (yellow rectangles) are minimal projective factorizations in $\C$.
\[
\tikz[ 
overlay]{\filldraw[fill=yellow!50,draw=red!50!yellow] (3.61,2.73) rectangle (4.21,0.3);

\filldraw[fill=yellow!50,draw=red!50!yellow] (5.28,1.78) rectangle (5.85,-0.69);

\filldraw[fill=yellow!50,draw=red!50!yellow] (6.88,.81) rectangle (7.47,-1.65);

\filldraw[fill=yellow!50,draw=red!50!yellow] (8.55,-0.2) rectangle (9.1,-2.7);

\filldraw[fill=yellow!50,draw=red!50!yellow] (10.15,-1.2) rectangle (10.76,-3.6);
}
\begin{tikzcd}[row sep=12pt]
X\arrow[equal]{r}\ar{d}{c} 
\ar[swap, bend right= 30, "f"]{ddd}
&\cdots \ar[bend left=20]{dr}
\\
C\arrow[equal]{r}\ar{dd}{\pi}&\cdots\ar[bend left=15]{dr}   & X_4\ar{d}{c}\ar{dr}
\\
& & 
W_4\ar{d}{p}\ar{dr}  & X_3\ar{d}{c}\ar{dr}
\\
Y\arrow[equal]{r}&\cdots  \ar[bend left=15]{dr}& 
Q_4\ar{r}{\alpha}\ar[swap]{d}{q}
\arrow[dr, phantom, "\scalebox{1.5}{\color{black}$\lrcorner$}", pos=.1]
&W_3\ar{d}{p} \ar{dr}& X_2\ar{d}{c}\ar{dr}
\\
& & Y_4\ar{r}{\beta}\ar{dr} & Q_3\ar{r}{\alpha}\ar[swap]{d}{q}
\arrow[dr, phantom, "\scalebox{1.5}{\color{black}$\lrcorner$}", pos=.1]
&W_2\ar{d}{p} \ar{dr}& X_1\ar{d}{c}\ar{dr}
\\
&& & Y_3\ar{r}{\beta}\ar{dr} & Q_2\ar{r}{\alpha}\ar[swap]{d}{q}
\arrow[dr, phantom, "\scalebox{1.5}{\color{black}$\lrcorner$}", pos=.1] & W_1\ar{d}{p} \ar{dr}& X_0\ar{d}{c}
\\
& & &  &
Y_2\ar{r}{\beta}\arrow{dr} & Q_1\ar{r}{\alpha}
\ar[swap]{d}{q}
\arrow[dr, phantom, "\scalebox{1.5}{\color{black}$\lrcorner$}", pos=.1]
& W_0\ar{d}{p}
\\
& 
 & &  &
& Y_1\ar{r} & Y_0
\end{tikzcd}    
\]

We claim that the  obtained factorization $f=\pi c$ in $\ch(\C)$
is a minimal cofibrant factorization.
The minimality and the fact that $\pi$ is a fibration and a weak equivalence follow respectively from \cref{cor_bla} and \cref{lemma_quasiiso}.\textit{2}.
Since for every $n\geq 0$, the  morphism $c\colon X_n \to W_n$ is
a monomorphism with projective cokernel, $c$ is a cofibration in 
$\ch(\C)$.
\end{proof}

We conclude the section by observing that, in analogy to what happens for minimal projective covers (see the end of \cref{subsec:chain}), the decomposability of an object in $\ch(\C)$ does not imply the decomposability of its minimal cofibrant replacement in $\ch(\C)$.
For example, taking $\C$ to be the category of abelian groups, the minimal cofibrant replacement of the chain complex $S^0(\mathbb{Z}/6\mathbb{Z})$ is 
\[
\begin{tikzcd}[ampersand replacement=\&]
\cdots \ar[r] \&0 \arrow[r] \arrow[d] \& \mathbb{Z} \arrow[d] \arrow[r,"\cdot 6"] \&  \mathbb{Z} \arrow[d, "\text{(mod 6)}"]
\\
\cdots \arrow[r] \& 0 \arrow[r] \& 0 \arrow[r] \&  \mathbb{Z}/6\mathbb{Z}
\end{tikzcd}
\]

However, in contrast to what happens for minimal projective covers (see \cref{minprojecover_iff_ind}), the indecomposability of an object does not imply the indecomposability of its minimal cofibrant replacement.
To see this, consider the abelian functor category $\Fun(0\leq 1 \leq 2, \ch(\vect))$ and take the following left-hand object there, where the vertical maps are the boundary maps:
\[
\begin{tikzcd}[ampersand replacement=\&]
0 \arrow[r] \arrow[d] \& \mathbb{F} \arrow[d, "\id"] \arrow[r,"\id"] \&  \mathbb{F} \arrow[d]
\\
\mathbb{F} \arrow[r, "\id"] \& \mathbb{F} \arrow[r] \&  0
\end{tikzcd}
\hspace{2cm}
\begin{tikzcd}[ampersand replacement=\&]
0 \arrow[r] \arrow[d] \& \mathbb{F} \arrow[d, "\id"] \arrow{r}{\scalemath{0.7}{\begin{bmatrix} 1 \\ 0 \end{bmatrix}}} \&  \mathbb{F}^2 \arrow{d}{\scalemath{0.7}{\begin{bmatrix} 1 & 0 \end{bmatrix}}}
\\
\mathbb{F} \arrow[r, "\id"] \&  \mathbb{F} \arrow[r] \& \mathbb{F}
\end{tikzcd} 
\]
The object is indecomposable, but its minimal cofibrant replacement on the right-hand side is decomposable.

\subsection{Structure theorem for cofibrant objects}
Recall that our model structure of choice on the category $\ch(\C)$ is the one described in~\cref{prop:ch_model}. 
In particular, an object $X$ in $\ch(\C)$ is cofibrant if, and only if, $X_n$ is projective for all $n\geq 0$.
\medskip

The aim of this section is to prove:

\begin{theorem}\label{adgfgjhjk}
    Let $\C$ be an abelian category satisfying the minimal cover axiom. 
    Assume $X$ is a cofibrant object in $\ch(\C)$, whose homology in every degree has projective dimension at most 1. Then $X$ 
    is isomorphic to
    \[\bigoplus_{n\geq 0}S^n(P[n]) \oplus D^{n+1}(Y_n)\]
where, for every $n\geq 0$, the chain complex  $P[n]$ is the minimal projective resolution of $H_n(X)$ and $Y_n$ is a projective object in $\C$.
\end{theorem}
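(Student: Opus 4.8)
The plan is to reduce the statement to the claim that \emph{every} cycle and boundary object of $X$ is projective, and then to split the complex degreewise into a direct sum of two-term complexes. Write $Z_n=\ker\partial_n$ and $B_n=\image\partial_{n+1}$, so that there are short exact sequences $0\to Z_n\to X_n\xrightarrow{\partial_n}B_{n-1}\to 0$ and $0\to B_n\to Z_n\to H_n(X)\to 0$ in $\C$. First I would prove by induction on $n$ that $Z_n$ and $B_n$ are projective. The base case is $Z_0=X_0$, which is projective because $X$ is cofibrant. For the inductive step, if $Z_n$ is projective, then the second sequence exhibits $B_n$ as the kernel of an epimorphism from a projective object onto $H_n(X)$; since $\text{proj-dim}(H_n(X))\le 1$, the characterization of projective dimension recalled in \cref{subs_abmin} forces $B_n$ to be projective. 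The first sequence (in degree $n+1$) then splits because $B_n$ is projective, so $Z_{n+1}$ is a direct summand of the projective object $X_{n+1}$ and is therefore projective.

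Next I would assemble a global splitting. Choosing, for each $n\ge 1$, a cycle inclusion $Z_n\hookrightarrow X_n$ together with a section $B_{n-1}\to X_n$ of $\partial_n$ (which exists since $B_{n-1}$ is projective) yields isomorphisms $X_n\cong Z_n\oplus B_{n-1}$ (with $X_0\cong Z_0$). Because every boundary is a cycle, $B_{n-1}\subseteq Z_{n-1}$, so under these isomorphisms the differential $\partial_n\colon Z_n\oplus B_{n-1}\to Z_{n-1}\oplus B_{n-2}$ becomes the matrix $\left[\begin{smallmatrix}0&\iota_{n-1}\\0&0\end{smallmatrix}\right]$, where $\iota_{n-1}\colon B_{n-1}\hookrightarrow Z_{n-1}$ is the inclusion of boundaries into cycles. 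Since the only nonzero entry of each differential maps the $B_{n-1}$-summand in degree $n$ into the $Z_{n-1}$-summand in degree $n-1$, the complex decomposes as a direct sum over $n\ge 0$ of the two-term complexes $S^n(C^{(n)})$, where $C^{(n)}$ is the complex $B_n\xrightarrow{\iota_n}Z_n$ concentrated in degrees $1$ and $0$.

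It then remains to identify each $C^{(n)}$. Since $Z_n$ and $B_n$ are projective and $0\to B_n\xrightarrow{\iota_n}Z_n\to H_n(X)\to 0$ is exact, $C^{(n)}$ is a (possibly non-minimal) projective resolution of $H_n(X)$. Comparing the epimorphism $Z_n\to H_n(X)$ with the minimal cover $P[n]_0\to H_n(X)$ and using \cref{min_cover_iso}, one gets that $P[n]_0$ is a direct summand of $Z_n$, say $Z_n\cong P[n]_0\oplus Y_n$ with $Y_n$ projective; taking kernels over $H_n(X)$ shows $B_n\cong P[n]_1\oplus Y_n$ and that $\iota_n$ decomposes as the minimal syzygy inclusion on the first summand and the identity on $Y_n$. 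Hence $C^{(n)}\cong P[n]\oplus D^1(Y_n)$, and applying $S^n$ together with the identity $S^n(D^1(Y_n))=D^{n+1}(Y_n)$ yields $X\cong\bigoplus_{n\ge 0}\big(S^n(P[n])\oplus D^{n+1}(Y_n)\big)$, as claimed.

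The main obstacle I anticipate is the inductive projectivity argument and its interaction with the global splitting: everything downstream is formal once all cycles and boundaries are known to be projective, but that conclusion genuinely uses both the cofibrancy of $X$ (degreewise projectivity) and the hypothesis $\text{proj-dim}(H_n(X))\le 1$ at every stage, and the induction must be threaded carefully ($Z_n$ projective $\Rightarrow B_n$ projective $\Rightarrow Z_{n+1}$ projective). The only other point requiring care is checking that the degreewise sections can be chosen so that the differentials take the clean off-diagonal form above; this is where the containment $B_{n-1}\subseteq Z_{n-1}$ is essential, guaranteeing that no correction terms appear and that the two-term summands genuinely split off as a direct sum of complexes.
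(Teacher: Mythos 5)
Your proof is correct, and it takes a genuinely different route from the paper's. The paper argues by induction on degree, peeling summands off the bottom: its \cref{fhdghkltjkdt} exhibits $S^m(P[m])$ as a direct summand of any cofibrant $X$ vanishing below degree $m$, via an explicit diagram of lifts $s_0,s_1,p_0,p_1$ whose composites are isomorphisms by minimality of $P[m]$, and its \cref{aGTHFHKGHJKGJDFDH} then splits off $D^{m+1}(X_m)$ once $H_m(X)=0$; iterating the two lemmas gives the theorem. You instead organize the argument into three independent stages: (i) an induction showing that all cycles $Z_n$ and boundaries $B_n$ are projective, which is the only place where cofibrancy and $\text{proj-dim}(H_n(X))\le 1$ interact (to quote the characterization from \cref{subs_abmin} you implicitly complete the epimorphism $Z_n\to H_n(X)$ to a projective resolution, which is legitimate since the minimal cover axiom gives enough projectives); (ii) the classical splitting of a complex with projective cycles and boundaries into the two-term complexes $S^n(B_n\xrightarrow{\iota_n}Z_n)$, correctly justified by the containment $B_{n-1}\subseteq Z_{n-1}$; (iii) a single comparison with the minimal cover, via \cref{min_cover_iso}, identifying each two-term piece with $P[n]\oplus D^1(Y_n)$. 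Both proofs produce infinite direct sums of finite type, so their existence in $\ch(\C)$ is not an issue. Your route has the advantage of isolating exactly where the dimension hypothesis enters --- it is used once, to make each $B_n$ projective --- which makes transparent why the theorem fails in higher projective dimension (stage (ii) is what breaks), a point the paper addresses only by exhibiting the counterexample of \cref{diag_the_counterexample}; it also defers minimality to one final, purely formal comparison. The paper's route, by contrast, builds minimality directly into the splitting and never needs the bookkeeping of your off-diagonal matrix form, at the cost of intertwining projectivity and minimality inside Diagram~(\ref{diam_decomposition}) and of a less structural induction.
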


From~\cref{adgfgjhjk} and recalling that an object in $\ch(\C)$ and its minimal cofibrant replacement have the same homologies, we get an explicit description of a minimal cofibrant replacement of a chain complex whose homologies have projective dimension at most $1$.

\begin{corollary}\label{sdgshj}
Let $\C$ be an abelian category satisfying the minimal cover axiom.
Assume $X$ in $\ch(\C)$ is such that 
its homology in every degree has projective dimension at most~1. 
Then the minimal cofibrant replacement of $X$ in $\ch(\C)$ is isomorphic to
\[
\bigoplus_{n\geq 0}S^n(P[n]) \oplus 
D^{n+1}(Y_n)
\]
where, for every $n$, the chain complex $P[n]$ is the minimal projective resolution of $H_n(X)$ and $Y_n$ is a projective object in $\C$.
\end{corollary}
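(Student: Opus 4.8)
The plan is to prove the decomposition by first showing that all cycles and boundaries of $X$ are projective, then splitting the two canonical families of short exact sequences attached to $X$, and finally reorganizing the resulting summands into spheres and disks. Write $Z_n = \ker(\partial_n)$ and $B_n = \image(\partial_{n+1})$, so that we have the exact sequences $0 \to Z_n \to X_n \xrightarrow{\partial_n} B_{n-1} \to 0$ and $0 \to B_n \to Z_n \to H_n(X) \to 0$ for every $n$.

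First I would show by induction on $n \geq 0$ that $B_{n-1}$, $Z_n$, and $B_n$ are all projective. The base case is $B_{-1} = 0$. Assuming $B_{n-1}$ projective, the first sequence splits, since its cokernel is projective, so $Z_n$ is a direct summand of the projective object $X_n$ and is therefore projective. Then the second sequence exhibits $H_n(X)$ as a quotient of the projective object $Z_n$; since $\C$ has enough projectives, this extends to a projective resolution of $H_n(X)$ with $Z_n$ in degree $0$, and the hypothesis $\text{proj-dim}(H_n(X)) \leq 1$ forces the kernel $B_n = \ker(Z_n \to H_n(X))$ to be projective. This closes the induction and shows that every $Z_n$ and every $B_n$ is projective.

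Next, for each $n$ I would compare the presentation $0 \to B_n \xrightarrow{\iota_n} Z_n \xrightarrow{\pi_n} H_n(X) \to 0$ with the minimal projective resolution $0 \to P[n]_1 \xrightarrow{d_n} P[n]_0 \xrightarrow{\epsilon_n} H_n(X) \to 0$, which has length at most $1$ by hypothesis. Lifting $\pi_n$ through the cover $\epsilon_n$ and $\epsilon_n$ through $\pi_n$, both possible by projectivity, produces endomorphisms of $P[n]_0$ whose composite is an automorphism by the minimality of $\epsilon_n$ and \cref{min_cover_iso}. This splits $P[n]_0$ off as a direct summand of $Z_n$ on which $\pi_n$ restricts to $\epsilon_n$, giving $Z_n \cong P[n]_0 \oplus Y_n$ with $Y_n$ projective and $\pi_n$ vanishing on $Y_n$. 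Passing to kernels yields $B_n \cong P[n]_1 \oplus Y_n$ and identifies $\iota_n$ with $d_n \oplus \id_{Y_n}$.

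Finally I would assemble the global isomorphism. Fixing the splittings of $0 \to Z_n \to X_n \to B_{n-1} \to 0$ gives $X_n \cong Z_n \oplus B_{n-1}$, under which $\partial_n$ annihilates $Z_n$ and carries $B_{n-1}$ into $Z_{n-1}$ via $\iota_{n-1}$; thus $\partial_n$ is the block map with single nonzero entry $\iota_{n-1}$. Substituting $Z_n \cong P[n]_0 \oplus Y_n$, $B_{n-1} \cong P[n-1]_1 \oplus Y_{n-1}$, and $\iota_{n-1} = d_{n-1} \oplus \id_{Y_{n-1}}$ makes $\partial_n$ block diagonal, separating a sphere block ($P[n]_0$ in degree $n$ and $P[n]_1$ in degree $n+1$, joined by $d_n$) from a disk block ($Y_n$ in degrees $n$ and $n+1$, joined by the identity). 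These blocks are exactly $S^n(P[n])$ and $D^{n+1}(Y_n)$, giving the claimed direct sum. The main obstacle is this last splitting: one must extract the minimal resolution as a summand of the presentation coming from $X$ and check that the complementary differential is the identity, so that no cross terms between sphere and disk parts survive. This is precisely where the minimal cover axiom is used, through \cref{min_cover_iso}; the projectivity induction, by contrast, is a routine consequence of $\text{proj-dim}(H_n(X)) \leq 1$ together with the splitting of sequences with projective cokernel.
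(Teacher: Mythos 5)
Your argument, as written, proves the wrong statement: it is a proof of \cref{adgfgjhjk} (the structure theorem for \emph{cofibrant} objects), not of the corollary. The corollary concerns an \emph{arbitrary} $X$ in $\ch(\C)$ whose homologies have projective dimension at most $1$, and asserts the decomposition for its \emph{minimal cofibrant replacement}, which is never mentioned in your proposal. Your very first induction step reads ``$Z_n$ is a direct summand of the projective object $X_n$'' --- but the corollary's $X$ is not assumed degreewise projective, so this step is unavailable, and the conclusion is in fact false for $X$ itself in general: for $\C$ the category of abelian groups and $X=S^0(\mathbb{Z}/6\mathbb{Z})$, the homology has projective dimension $1$, yet $X$ is not isomorphic to any sum of the stated form, since the sphere on the minimal resolution of $\mathbb{Z}/6\mathbb{Z}$ is the complex $\mathbb{Z}\xrightarrow{\cdot 6}\mathbb{Z}$ and not $\mathbb{Z}/6\mathbb{Z}$ concentrated in degree $0$. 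What is missing is exactly the connecting tissue the paper supplies in the sentence preceding the corollary: by \cref{cor:ch-min-cofbnt-replmnt} the minimal cofibrant replacement $C\to X$ exists; $C$ is cofibrant, hence degreewise projective by the description of cofibrations in \cref{prop:ch_model}; and $C\to X$ is a quasi-isomorphism, so $H_n(C)\cong H_n(X)$ has projective dimension at most $1$ and admits the same (isomorphic) minimal projective resolution $P[n]$. Your entire argument must be run on $C$, not on $X$.

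With that repair, the body of your proof is correct, and it is a genuinely different route to \cref{adgfgjhjk} than the paper's. The paper splits off summands degree by degree from the bottom (\cref{fhdghkltjkdt,aGTHFHKGHJKGJDFDH}), constructing explicit lifts at each stage; you instead globally split cycles and boundaries (showing $Z_n$ and $B_n$ are projective by induction, using $\text{proj-dim}(H_n)\leq 1$ and the splitting of sequences with projective cokernel), extract $P[n]_0$ as a summand of $Z_n$ via two lifts whose composite is an automorphism by minimality of the cover, and reassemble $\partial$ in block-diagonal form through the identification $\iota_n=d_n\oplus\id_{Y_n}$. This cycles-and-boundaries argument is classical and isolates the only delicate point (no cross terms between sphere and disk blocks) in a single place; both proofs rest on the same two inputs, namely degreewise projectivity and the length bound on the minimal resolutions.
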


These results hinge on the fact that the minimal projective resolutions of the homologies have at most length 1. 
This firstly means that such resolutions are non-zero in at most  two consecutive degrees and the boundary morphism between them is a monomorphism. 
These properties are crucial to prove \cref{adgfgjhjk} (see Diagram (\ref{diam_decomposition})).
Therefore we cannot directly extend our arguments to objects whose homologies have higher projective dimensions. 
However, this is not a shortcoming of the method: in \cref{sec_gluing} we present an indecomposable object in
$\ch(\mathcal{F})$ that is nonzero in four consecutive degrees, where $\mathcal{F}$ is an abelian category with projective dimension $2$ (see \cref{diag_the_counterexample}).

Secondly, we can describe the indecomposable cofibrant objects in $\ch(\C)$ whose homologies have projective dimension at most $1$:

\begin{proposition}
Let $\C$ be an abelian category satisfying the minimal cover axiom. Assume  $X$ is a cofibrant object in  $\ch(\C)$ whose homology in every degree has projective dimension at most 1.
Then $X$ is indecomposable if, and only if, one of the following holds:
\begin{itemize}
    \item  The homology of $X$ in every  degree is trivial, in which case $X$ is isomorphic to $D^n(Y)$ for some indecomposable projective object $Y$ in $\C$, 
    \item The homology of $X$ is nontrivial only in one degree, say $n$,  in which case $H_n(X)$ is indecomposable and $X$ is isomorphic to $S^n(P)$ where $P$ is a minimal projective resolution of $H_n(X)$.
 \end{itemize}
\end{proposition}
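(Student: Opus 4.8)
The plan is to leverage the structure theorem \cref{adgfgjhjk}, which already decomposes any such cofibrant $X$ as $\bigoplus_{n\geq 0} S^n(P[n]) \oplus D^{n+1}(Y_n)$, where each $P[n]$ is the minimal projective resolution of $H_n(X)$ and each $Y_n$ is projective in $\C$. Indecomposability of $X$ forces this direct sum to collapse to a single nonzero summand, so the strategy is to analyze the two types of summands and determine when exactly one of them survives and is itself indecomposable. First I would split into the two stated cases according to whether the homology of $X$ vanishes in all degrees or not.

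For the forward direction, suppose $X$ is indecomposable. If all homologies vanish, then every $P[n]$ is zero (the minimal projective resolution of the zero object), so $X \cong \bigoplus_{n\geq 0} D^{n+1}(Y_n)$. A single disk $D^{n+1}(Y_n)$ is indecomposable precisely when $Y_n$ is an indecomposable projective object, since $D^{n+1}(-)$ is an additive functor that is fully faithful on $\C$ by the disk adjunction recalled in \cref{subsec:chain}; hence indecomposability of the whole sum forces exactly one $Y_n$ to be nonzero and indecomposable, giving $X \cong D^n(Y)$ for an indecomposable projective $Y$. If instead some homology is nontrivial, I would argue that the $S^n(P[n])$ and $D^{n+1}(Y_n)$ summands are nonzero honest summands, so indecomposability forces all disk summands to vanish and exactly one $S^n(P[n])$ to be nonzero. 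Since $H_n(S^n(P[n])) \cong H_n(X)$ and $S^n(-)$ preserves direct sums, a decomposition $H_n(X) \cong A \oplus B$ would lift to a decomposition of the minimal resolutions (by uniqueness of minimal resolutions up to isomorphism) and hence of $S^n(P[n])$, so indecomposability of $X$ forces $H_n(X)$ to be indecomposable; here $X \cong S^n(P)$ with $P = P[n]$ the minimal resolution.

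For the converse, I would verify that each listed object is genuinely indecomposable. For $D^n(Y)$ with $Y$ indecomposable projective: any idempotent endomorphism of $D^n(Y)$ in $\ch(\C)$ restricts (via the disk bijection $\text{mor}_{\ch(\C)}(D^n(Y), D^n(Y)) \cong \text{mor}_{\C}(Y, Y)$) to an idempotent endomorphism of $Y$, which must be trivial by indecomposability of $Y$, so $D^n(Y)$ has no nontrivial idempotents. For $S^n(P)$ with $P$ the minimal resolution of an indecomposable $H_n(X)$: I would use that $S^n(P)$ is itself the minimal projective resolution of the sphere $S^n(H_n(X))$ in $\ch(\C)$ (by \cref{prop:minimal_cover}, since suspension shifts the relevant cokernels), and then invoke \cref{minprojecover_iff_ind}, which says the minimal projective resolution of an indecomposable object is indecomposable in $\ch(\C)$.

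The main obstacle I expect is the bookkeeping that guarantees that distinct summands in the decomposition of \cref{adgfgjhjk} cannot conspire to produce a decomposition of $X$ while each individual summand is decomposable — in other words, making rigorous that indecomposability of $X$ forces exactly one nonzero summand of a single type. The cleanest route is to invoke uniqueness of the decomposition (equivalently, uniqueness of the minimal cofibrant replacement up to isomorphism, \cref{subs_modcat}) together with a Krull--Schmidt-type argument that an indecomposable object admitting such a direct sum decomposition must equal one of the indecomposable factors; reducing the $S^n(P[n])$ case to indecomposability of $H_n(X)$ via the homology functor is the delicate point, since I must check that a splitting of the homology actually lifts back through the minimal resolution, which is exactly where the length-at-most-one hypothesis and uniqueness of minimal resolutions are used.
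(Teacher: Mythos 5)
Your overall strategy coincides with the paper's: apply the structure theorem (\cref{adgfgjhjk}) to write $X\cong\bigoplus_{n\geq 0}S^n(P[n])\oplus D^{n+1}(Y_n)$, observe that indecomposability forces exactly one nonzero summand, and then verify that each of the two model objects is itself indecomposable. The forward direction as you present it is fine, and your worry about ``Krull--Schmidt bookkeeping'' is unnecessary: once this specific direct-sum decomposition of $X$ is in hand, indecomposability of $X$ by definition forces all but one summand to vanish, so no uniqueness-of-decomposition or Krull--Schmidt statement is needed. Your argument for the disk case of the converse, via the ring isomorphism $\End_{\ch(\C)}(D^n(Y))\cong \End_{\C}(Y)$ coming from the disk property, is also correct, and is a detail the paper leaves implicit.

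The genuine flaw is in the sphere case of the converse. Your justification rests on the claim that $S^n(P)$ is the minimal projective resolution of the sphere $S^n(H_n(X))$ in $\ch(\C)$; this is false whenever $n\geq 1$. By \cref{prop:projective_characterization}, $S^n(P)$ is not even a projective object of $\ch(\C)$, since $H_n(S^n(P))\cong H_n(X)\neq 0$; and the paper's own example following \cref{prop:minimal_cover} computes the minimal projective resolution of $S^n(\mathbb{F})$ in $\ch(\vect)$ to be the length-$n$ complex of disks $0\to D^0(\mathbb{F})\to\cdots\to D^n(\mathbb{F})\to S^n(\mathbb{F})$, not $S^n(\mathbb{F})$ itself, which is what $S^n(P)$ equals in that case. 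There is also a category mismatch: \cref{minprojecover_iff_ind} concerns an object of $\C$ and its resolution viewed as an object of $\ch(\C)$, so invoking it for the sphere, which is an object of $\ch(\C)$, would produce a statement about an object of $\ch(\ch(\C))$, not about $S^n(P)$. The repair is short and is exactly the paper's argument: apply \cref{minprojecover_iff_ind} in $\C$ to the indecomposable object $H_n(X)$, concluding that its minimal projective resolution $P$ is indecomposable as an object of $\ch(\C)$; then observe that the suspension $S^n\colon\ch(\C)\to\ch(\C)$ is additive and fully faithful, hence preserves indecomposability, so $S^n(P)$ is indecomposable as well.
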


\begin{proof}
If the homology of $X$ is trivial in all degrees and $X$ is indecomposable, then, according to~\cref{sdgshj},
$X$ has to be isomorphic to $D^{n+1}(Y)$ for some projective $Y$ in $\C$
which necessarily has to be indecomposable. 
Moreover, for every projective and indecomposable $Y$ in $\C$, the chain complex $D^{n+1}(Y)$ is indecomposable.

If the homology of $X$ is nontrivial and $X$ is indecomposable,  then, according to~\cref{sdgshj},
the homology of $X$ has to be nontrivial only in one degree, say $n$, and $X$ should be  isomorphic to  $S^n(P)$, where $P$ is the minimal projective resolution of $H_n(X)$.
If $H_n(X)$ were not indecomposable, then a minimal projective resolution of its nontrivial summand would be a nontrivial summand of $S^n(P)$, contradicting the indecomposability of $X$.
Thus $H_n(X)$ is indecomposable. 
The indecomposability of $H_n(X)$ in $\C$ and the minimality of its projective resolution $P$ imply the indecomposability of $P$ as a chain complex. 
Thus any of its suspension $S^n(P)$ is also an indecomposable chain complex.
\end{proof}

To prove~\cref{adgfgjhjk}, we start with

\begin{lemma}\label{fhdghkltjkdt}
    Under the assumptions of~\Cref{adgfgjhjk}, let $m\geq 0$ be a natural number such that $X_i=0$ for $i<m$. 
    Then $X$ is isomorphic to
    $S^m(P[m])\oplus X'$ for some  cofibrant object $X'$ in $\ch(\C)$ for which $H_m(X')=0$.
\end{lemma}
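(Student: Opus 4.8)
The plan is to peel off, as a direct summand of the chain complex $X$, a subcomplex concentrated in degrees $m$ and $m+1$ that carries all of $H_m(X)$, and to identify it with $S^m(P[m])$.

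First I would record the shape of the objects involved. Since $X_{m-1}=0$, the boundary $\partial_m$ is zero, so $\ker(\partial_m)=X_m$ and $H_m(X)=\coker(\partial_{m+1})$. Because $\text{proj-dim}(H_m(X))\le 1$, its minimal projective resolution is $0\to P_1\xrightarrow{d}P_0\xrightarrow{c_0}H_m(X)\to 0$, where $c_0$ is the minimal cover, $P_1$ is projective and $d$ is a monomorphism with image $\ker(c_0)$ (by the characterization of projective dimension at most $1$ recalled in \cref{subs_abmin}, which makes $\ker(c_0)$ projective and hence its minimal cover an isomorphism). Thus $S^m(P[m])$ is the complex $P_1\xrightarrow{d}P_0$ placed in degrees $m+1$ and $m$; since $d$ is monic, its only nonzero homology is $H_m(S^m(P[m]))=\coker(d)=P_0/\ker(c_0)\cong H_m(X)$.

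Next I would build a comparison chain map $\iota\colon S^m(P[m])\to X$. Using projectivity of $P_0$ and the epimorphism $\pi\colon X_m\to\coker(\partial_{m+1})=H_m(X)$, lift $c_0$ to $\phi_0\colon P_0\to X_m$ with $\pi\phi_0=c_0$. Since $c_0 d=0$, the composite $\phi_0 d$ lands in $\ker(\pi)=\image(\partial_{m+1})$, so projectivity of $P_1$ lets me lift it through $\partial_{m+1}$ to $\phi_1\colon P_1\to X_{m+1}$ with $\partial_{m+1}\phi_1=\phi_0 d$. The pair $(\phi_0,\phi_1)$ is a chain map $\iota$, as the remaining boundary conditions are trivial because $X_{m-1}=0$ and $S^m(P[m])$ vanishes outside degrees $m,m+1$.

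The crux is to promote the (automatic) degreewise splitting into an honest chain-level retraction $r\colon X\to S^m(P[m])$, since only this yields a direct sum decomposition. In degree $m$: lifting $\pi$ through $c_0$ via projectivity of $X_m$ and then invoking \cref{min_cover_iso} shows $\phi_0$ is a split monomorphism; after correcting the chosen complement by a lift of its $\pi$-image through $c_0$ (projectivity of the complement), I may assume the retraction $r_m\colon X_m\to P_0$ satisfies $\ker(r_m)\subseteq\ker(\pi)$, so that $\pi=c_0 r_m$. Setting $\partial':=r_m\partial_{m+1}$, this gives $c_0\partial'=\pi\partial_{m+1}=0$, hence $\image(\partial')\subseteq\ker(c_0)=\image(d)$; since $d$ is monic I define $r_{m+1}\colon X_{m+1}\to P_1$ as the unique map with $d\,r_{m+1}=\partial'$. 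A short verification (using $d$ monic, $\partial_{m+1}\phi_1=\phi_0 d$, and $\partial_{m+1}\partial_{m+2}=0$) shows $r=(r_m,r_{m+1})$ is a chain map with $r\iota=\id$. Consequently $X\cong S^m(P[m])\oplus X'$ with $X'=\ker(r)$; each $X'_i$ is a direct summand of the projective $X_i$, so $X'$ is degreewise projective, i.e. cofibrant. Finally, taking $H_m$ of the splitting, the summand inclusion $H_m(\iota)$ is, by construction of $\phi_0$, the isomorphism $\coker(d)\cong H_m(X)$, which forces the complementary summand $H_m(X')$ to vanish.

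I expect the main obstacle to be this third step: the degreewise splitting is free, but lifting it to a chain-level retraction is exactly where the hypothesis $\text{proj-dim}\le 1$ is used twice — once so that $P_1\cong\ker(c_0)$ and $d$ is monic (which makes the factorization defining $r_{m+1}$ unique and well posed), and once to adjust the degree-$m$ complement so that $\partial'$ factors through $d$. Without monicity of $d$, neither the construction of $r_{m+1}$ nor the chain-map verification would go through, which is precisely the reason the argument cannot be extended to higher projective dimension.
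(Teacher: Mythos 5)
Your proof is correct and takes essentially the same route as the paper's: both construct the section $S^m(P[m])\to X$ by lifting the minimal cover through the quotient $X_m\to H_m(X)$ (projectivity of $P[m]_0$ and $P[m]_1$), construct a map back using projectivity of $X_m$ together with the monicity of $d$ forced by $\text{proj-dim}\le 1$, and invoke minimality of the cover (\cref{min_cover_iso}) to obtain the splitting $X\cong S^m(P[m])\oplus\ker(r)$. The only cosmetic difference is that you normalize the retraction so that $r\iota=\id$ holds exactly, whereas the paper takes arbitrary lifts $p_0,p_1$ and shows the composites $p_0s_0$ and $p_1s_1$ are isomorphisms before splitting off the summand.
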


\begin{proof}
Consider the following commutative diagram in $\C$ 
\begin{equation}\label{diam_decomposition}
\begin{tikzcd}
0\ar{r}\ar{d} & X_{m+2}\ar{r}\ar{d}[swap]{\partial_{m+2}} & 0\ar{d}
\\
P[m]_1\ar{r}{s_1}\ar[hook]{d}[swap]{d} & X_{m+1}\ar{r}{p_1}\ar{d}[swap]{\partial_{m+1}} & P[m]_1\ar[hook]{d}{d}
\\
P[m]_0\ar{r}{s_0}\ar[two heads]{dr} & X_{m}\ar{r}{p_0}\ar[two heads]{d} & P[m]_0\ar[two heads]{dl}
\\
& H_m(X)
\end{tikzcd} 
\end{equation}
where:
\begin{itemize}
    \item the minimal projective resolution $P[m]$ of $H_m(X)$ has length at most 1 by hypothesis;
    \item $s_0$ is a lift of the cover $P[m]_0\to H_m(X)$ along
    the quotient $X_m\to H_m(X)$ which exists since $P[m]_0$ is projective;
    \item $p_0$ is a lift of the quotient $X_m\to H_m(X)$ along the cover 
    $P[m]_0\to H_m(X)$ which exists since $X_m$ is projective;
    \item $s_1$ is a lift of $s_0d$ along $\partial_{m+1}$ which exists since $P[m]_1$ is projective and the sequence $X_{m+1}\to X_m\to H_m(X)\to 0$
    is exact;
    \item $p_1$ is a lift of $p_0\partial_{m+1}$ along $d$ which exists
    since $X_{m+1}$  is projective and the sequence  $P[m]_1\to P[m]_0\to H_m(X)\to 0$ is exact;
    \item The commutativity of the top right square follows from $d$ being a monomorphism and $\partial_{m+1}\partial_{m+2}=0$;
    \item The compositions $p_1s_1$ and $p_0s_0$ are isomorphisms due to the fact that $P[m]\to H_m(X)$ is a minimal resolution.
\end{itemize}
The commutativity of this diagram and the fact that $p_1s_1$ and $p_0s_0$ are isomorphisms imply  $S^m(P[m])$ is a direct summand of $X$. Consequently $X$ is isomorphic to $S^m(P[m])\oplus \text{ker}(p\colon X\to S^m(P[m]))$.
\end{proof}

\begin{lemma}\label{aGTHFHKGHJKGJDFDH}
    Under the assumptions of~\Cref{adgfgjhjk}, let $m\geq 0$ be a natural number such that $X_i=0$ for $i<m$ and $H_m(X)=0$. 
    Then $X$ is isomorphic to $D^{m+1}(X_m)\oplus X'$ for some cofibrant object $X'$ in $\ch(\C)$ for which $ X'_i=0$ for $i\leq m$.
\end{lemma}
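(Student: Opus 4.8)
The plan is to split off the disk $D^{m+1}(X_m)$ as a direct summand by exhibiting an explicit inclusion and retraction, in the same spirit as the proof of \cref{fhdghkltjkdt}, though here the argument is lighter since there is no homology to resolve. The starting observation is that the hypotheses force the boundary $\partial_{m+1}\colon X_{m+1}\to X_m$ to be an epimorphism: since $X_i=0$ for $i<m$, the morphism $\partial_m\colon X_m\to X_{m-1}=0$ is zero, whence $H_m(X)=\coker(\partial_{m+1})$, and the assumption $H_m(X)=0$ says exactly that $\partial_{m+1}$ is onto. Because $X$ is cofibrant, $X_m$ is projective, so this epimorphism admits a section $s\colon X_m\to X_{m+1}$ with $\partial_{m+1}s=\id$.

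Next I would assemble two morphisms in $\ch(\C)$. Let $\varphi\colon D^{m+1}(X_m)\to X$ be the chain map that is $\id$ in degree $m$ and $s$ in degree $m+1$; the only boundary condition to verify is $\partial_{m+1}s=\id$, which holds by the choice of $s$. Dually, let $r\colon X\to D^{m+1}(X_m)$ be given by $r_m=\id$ and $r_{m+1}=\partial_{m+1}$, and zero in all other degrees; this is a chain map because $\partial^{D}_{m+1}r_{m+1}=\partial_{m+1}=r_m\partial_{m+1}$ and because $r_{m+1}\partial_{m+2}=\partial_{m+1}\partial_{m+2}=0$. A direct check in degrees $m$ and $m+1$ gives $r\varphi=\id$, so $\varphi$ is a split monomorphism of chain complexes.

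Finally, the composite $e=\varphi r$ is an idempotent endomorphism of $X$. Since idempotents split in the abelian category $\ch(\C)$, we obtain a decomposition $X\cong\image(e)\oplus\ker(e)$, where $\image(e)\cong D^{m+1}(X_m)$ via $\varphi$; setting $X'=\ker(e)$ yields the desired isomorphism $X\cong D^{m+1}(X_m)\oplus X'$. In degree $m$ the endomorphism is $e_m=\id$, so $X'_m=0$; combined with $X_i=0$ for $i<m$ this gives $X'_i=0$ for all $i\leq m$. Being a direct summand of the degreewise-projective complex $X$, the complex $X'$ is itself degreewise projective and hence cofibrant.

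The steps are routine, and the only place calling for genuine care is checking that the degreewise assignments defining $\varphi$ and $r$ are compatible with the boundaries (in particular the vanishing $r_{m+1}\partial_{m+2}=0$) and that the resulting complement vanishes in all degrees $\leq m$; the remaining bookkeeping, including the projectivity of the summands of $X'$, is formal.
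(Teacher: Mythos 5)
Your proof is correct and takes essentially the same route as the paper's: both use the fact that $H_m(X)=0$ and $X_{m-1}=0$ force $\partial_{m+1}$ to be an epimorphism, use projectivity of $X_m$ to choose a section $s$ with $\partial_{m+1}s=\id$, and then exhibit $D^{m+1}(X_m)$ as a direct summand of $X$ via the chain maps given by $(\id,s)$ and $(\id,\partial_{m+1})$ in degrees $m$ and $m+1$. The paper packages this as one commutative diagram whose columns are the complexes $D^{m+1}(X_m)$, $X$, $D^{m+1}(X_m)$, whereas you spell out the retraction and the splitting of the idempotent $\varphi r$ explicitly; the mathematical content is identical.
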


\begin{proof}
Consider the following commutative diagram in $\C$:
\[
\begin{tikzcd}
0\ar{r}\ar{d} & X_{m+2}\ar{r}\ar{d}[swap]{\partial_{m+2}} & 0\ar{d}\\
X_m\ar{r}{s_0}\ar{d}[swap]{\text{id}} & X_{m+1}\ar{r}{\partial_{m+1}}\ar{d}[swap]{\partial_{m+1}} & X_m\ar{d}{\text{id}}\\
X_m\ar[equal, r] & X_m\ar[equal, r] & X_m
\end{tikzcd} 
\]
where $s_0$ is a lift along $\partial_{m+1}$ which exists since $X_m$ is projective and $\partial_{m+1}$ is an epimorphism.  
Commutativity of this diagram shows that $D^{m+1}(X_m)$ is a direct summand of $X$. It follows that $X$ is isomorphic to $D^{m+1}(X_m)\oplus X'$
where $X'$ is a cofibrant chain complex with 
$X'_i=0$ for $i\leq m$.
\end{proof}

\begin{proof}[Proof of \Cref{adgfgjhjk}]
By applying \cref{fhdghkltjkdt,aGTHFHKGHJKGJDFDH} 
to $X$ with $m=0$, we obtain that $X$ is isomorphic to $P[0]\oplus D^1(Y_0)\oplus X'$ for some projective object $Y_0$ in $\C$ and a cofibrant chain complex $X'$ with $X'_0=0$. 
We can then continue and apply \cref{fhdghkltjkdt,aGTHFHKGHJKGJDFDH} to $X'$ with $m=1$ to get that $X'$ is isomorphic to
$S(P[1])\oplus  D^2(Y_1)\oplus X''$ for some projective object $Y_1$ in $\C$ and a cofibrant chain complex  $X''$ with $X''_i=0$ for $i\leq 1$.
This gives an isomorphism between $X$ and 
$P[0]\oplus D^1(Y_0)\oplus S(P[1])\oplus  D^2(Y_1)\oplus X''$.
By continuing this process by induction we obtain the desired description of $X$
as  $\bigoplus_{n\geq 0}S^n(P[n]) \oplus 
D^{n+1}(Y_n)$.
\end{proof}

\section{Tame functors}\label{sec_tameness}
Throughout this section, $\C$ is assumed to be a category closed under finite colimits. 
In particular $\C$ has an initial object 
$\varnothing$. 

This assumption guarantees that, for every poset functor $\alpha \colon \D\to  \pos$ 
with finite $\D$, the precomposition with $\alpha$ functor, $(-)^\alpha\colon \Fun(\pos, \C)\to \Fun(\D, \C)$, has a left adjoint.
This left adjoint is called 
\define{left Kan extension} along $\alpha$
and is denoted by $\alpha^k\colon \Fun(\D, \C)\to \Fun(\pos, \C)$.
In detail, for a functor $Y\colon\D\to \C$ and  an object  $x$ in $\pos$, the value $\alpha^k Y(x)$ is isomorphic to $ \colim_{\alpha\le x}Y$, where $\alpha\le x\coloneqq\{d\in \D\mid \alpha(d)\le x\}$. This colimit exists since $\alpha\le x$ is finite.
We are also going to use the notation $\pos\le x\coloneqq\{q\in \pos\mid q\le x\}$, and, mutatis mutandis, for the strict inequalities. 
The term a \define{full subposet} of $\pos$ refers to a subset $\D\subset \pos$ with the induced relation:
$x\leq y$ in $\D$ if, and only if, $x\leq y$ in $\pos$.
\medskip

For an example of left Kan extension, consider the inclusion $\alpha\colon\{x\}\subset \pos$.
A functor $ \{x\}\to \C$ is simply  a choice of an object $A$ in $\C$. 
Its left Kan extension along $\alpha$ is denoted 
by $A(x,-)\colon \pos\to\C$ and called a \define{homogenous free functor} (in degree $x$). Explicitly, $A(x,q)$ is isomorphic to either $\varnothing$, if $x\not<q$, or to 
$A$ if $x\le q$. 
Homogeneous free functors are examples of tame functors:

\begin{definition}
Let $\pos$ be a poset and $\C$ a category closed under finite colimits. 
A functor $X\colon \pos\to \C$ is  \define{tame} if 
it is isomorphic to
$\alpha^kY$ for some $\alpha\colon \D\to\pos$ with $\D$ finite, and some
$Y\colon \D\to\C$. 
In this case $X$ is also said to be \define{discretized} by $\alpha$, and $Y$ is called its \define{discretization} (along $\alpha$). The full subcategory of tame functors in $\Fun(\pos, \C)$ is denoted by $\tame(\pos, \C)$.
\end{definition}

If $\pos$ is finite, then every functor indexed by $\pos$ is tame.
Thus tameness is a restrictive condition only for functors indexed by infinite posets.  
\medskip

We now state some properties of left Kan extensions and discretizations of tame functors that will be fundamental in the rest of the work. 

\begin{proposition}\label{prop_full_lke}
Let $\C$ be a category closed under finite colimits.
\begin{enumerate}
\item If $X$ in $\tame(\pos, \C)$ is discretized by $\alpha\colon \D\to \pos$, then it is also discretized by any finite full subposet $\D'\subset \pos$  such that $\alpha(\D)\subset \D'$.
\item Let $\{X_i\}_{i=1}^n$ be a finite collection of functors in $\tame(\pos, \C)$. 
There exists a finite full subposet $\D\subset \pos$ that discretizes every $X_i$. 
\item If $\alpha \colon \D\subset \pos$ is a finite full subposet inclusion, then, for every functor $Y\colon \D\to \C$, the natural transformation $Y\to (\alpha^k Y)\alpha $ (adjoint to the identity) is an isomorphism.

\item If $X\colon \pos\to \C$ is tame, then, for every 
finite full subposet inclusion $\alpha \colon \D \subset\pos$  that discretizes $X$, the natural transformation $\alpha^k (X\alpha)\to X$ (adjoint to the identity) is an isomorphism.
\item A functor $X\colon \pos\to \C$ is tame if, and only if, there exists a finite full subposet inclusion $\alpha \colon \D \subset\pos$ for which the natural transformation $\alpha^k (X\alpha)\to X$ (adjoint to the identity) is an isomorphism. 
\item 
A morphism $\varphi\colon X\to X'$  in $\tame(\pos, \C)$ is an isomorphism if, and only if, there exists a finite full subposet inclusion $\alpha\colon \D\subset \pos$ discretizing both $X$ and $X'$ such that $\varphi^\alpha$ is an isomorphism in $\Fun(\D, \C)$. Equivalently, if, and only if, for every finite full subposet inclusion $\alpha\colon \D\subset \pos$ discretizing both $X$ and $X'$, the restriction $\varphi^\alpha$ is an isomorphism in $\Fun(\D, \C)$. 
\end{enumerate}
\end{proposition}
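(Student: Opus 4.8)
The plan is to establish part 3 first, since it is the geometric heart of the proposition, and then to derive every other part by formal manipulation of the adjunction $\alpha^k\dashv(-)^\alpha$. For part 3 I would compute the unit $\eta_Y\colon Y\to(\alpha^k Y)\alpha$ objectwise. Because $\alpha\colon\D\subset\pos$ is a \emph{full} subposet inclusion, for each $d\in\D$ the slice $\alpha\le d=\{d'\in\D\mid \alpha(d')\le d\}$ is exactly the principal down-set $\{d'\in\D\mid d'\le d\}$, and this small category has $d$ as a terminal (maximum) object. A colimit over a category with a terminal object is computed at that object, so $(\alpha^k Y)(\alpha(d))\cong\colim_{\alpha\le d}Y\cong Y(d)$, with the structure morphism being precisely the component $\eta_{Y,d}$; hence $\eta_{Y,d}$ is an isomorphism for every $d$, and $\eta_Y$ is a natural isomorphism. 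Equivalently, $\alpha^k$ is fully faithful on full subposet inclusions.

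For part 1 I would factor $\alpha\colon\D\to\pos$ as $\alpha=j\beta$, where $\beta\colon\D\to\D'$ is the corestriction (possible since $\alpha(\D)\subset\D'$) and $j\colon\D'\subset\pos$ is the inclusion. Since left Kan extensions compose, $(j\beta)^k\cong j^k\beta^k$, and both $\beta^k$ and $j^k$ are defined because $\D$ and $\D'$ are finite. Thus $X\cong\alpha^k Y\cong j^k(\beta^k Y)$ exhibits $X$ as discretized by the full subposet inclusion $j$ with discretization $\beta^k Y$. Part 2 then follows at once: each $X_i$ is discretized by some $\alpha_i\colon\D_i\to\pos$, so by part 1 every $X_i$ is discretized by the finite full subposet $\D\subset\pos$ supported on $\bigcup_i\alpha_i(\D_i)$.

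Parts 4, 5, and 6 are consequences of part 3. For part 4, I would use the triangle identity $\epsilon_{\alpha^k Y}\circ\alpha^k(\eta_Y)=\id_{\alpha^k Y}$: part 3 makes $\eta_Y$, and hence $\alpha^k(\eta_Y)$, an isomorphism, forcing the counit $\epsilon_{\alpha^k Y}$ to be an isomorphism; naturality of $\epsilon$ transports this to any $X\cong\alpha^k Y$, i.e. to any $X$ discretized by $\alpha$. Part 5 then combines this with parts 1--2: a tame $X$ is discretized by some finite full subposet inclusion, for which part 4 yields the counit isomorphism, while conversely the counit being an isomorphism exhibits $X\cong\alpha^k(X\alpha)$ and hence tame. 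For part 6, restriction always preserves isomorphisms, giving the implication from $\varphi$ being an isomorphism to $\varphi^\alpha$ being one for every discretizing $\alpha$; part 2 ensures a common discretizing subposet exists, linking the ``for every'' and ``there exists'' formulations non-vacuously; and the remaining direction follows by applying $\alpha^k$ to an isomorphism $\varphi^\alpha$ and reading off the naturality square of the counit, whose two horizontal edges $\alpha^k(X\alpha)\to X$ and $\alpha^k(X'\alpha)\to X'$ are isomorphisms by part 4 and whose edge $\alpha^k(\varphi^\alpha)$ is an isomorphism, so that the fourth edge $\varphi$ must be an isomorphism as well.

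The main obstacle is genuinely confined to part 3: once the slice $\alpha\le d$ is identified as a down-set possessing a terminal object, the computation of the Kan extension becomes transparent and everything downstream is abstract adjunction bookkeeping. The only points requiring care are that finiteness is preserved under the factorization in part 1 (so that $\beta^k$ and $j^k$ both exist), and keeping the directions of the unit and counit straight when invoking the triangle identities in parts 4 and 6.
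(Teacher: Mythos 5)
Your proposal is correct and follows essentially the same route as the paper's proof: part 3 via the explicit colimit formula and fullness of the inclusion (the terminal-object observation you spell out is exactly what the paper leaves implicit), part 1 via composition of left Kan extensions, part 2 by taking the union of images, and parts 4--6 by the adjunction counit and its naturality square. Your use of the triangle identity in part 4 merely makes precise what the paper asserts tersely, so there is no substantive difference to report.
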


\begin{proof}
Statement \textit{1} is a consequence of the fact that left Kan extensions commute with compositions. 

According to  \textit{1}, 
the full subposet $\bigcup_{i=1}^{n}\alpha_i(\D_i)\subset \D$ 
discretizes all the functors $X_i$, proving statement \textit{2}.

The isomorphism in \textit{3} between $Y$ and $(\alpha^k Y)\alpha$ can be checked by writing the left Kan extension explicitly as a colimit (see the beginning of the section) and using the fact that $\alpha$ is full inclusion.

Statement \textit{4} follows directly from \textit{3}, applied to $Y$ for which $X$ is isomorphic to $\alpha^kY$.

The condition in \textit{5} is sufficient by the definition of a tame functor. 
That it is necessary is a consequence of statements \textit{1} and \textit{4}.

To show \textit{6}, choose a finite full subposet inclusion $\alpha\colon \D\subset \pos$ discretizing both $X$ and $X'$ (statement~\textit{2}), and consider the following commutive square where the vertical arrows represent  isomorphisms according to \textit{4}:
\[
\begin{tikzcd}
\alpha^k (X\alpha)\arrow["\alpha^k(\varphi^\alpha)"]{r}\arrow[swap, "\cong"]{d} & \alpha^k (X'\alpha)\arrow["\cong"]{d}\\
X \arrow[swap, "\varphi"]{r} & X' 
\end{tikzcd}
\]
Thus, by functoriality of  $\alpha^k$, the natural transformation
$\varphi$ is an isomorphism if, and only if, $\varphi^\alpha$ is an isomorphism. Furthermore, if one restriction $\varphi^\alpha$ is an isomorphism, then, since $\varphi$ is an isomorphism, every other of its restrictions is also an isomorphism.
\end{proof}

Thanks to \cref{prop_full_lke}.\textit{2}, we can talk about a \define{common discretization} of a finite collection of tame functors.
\medskip

\cref{prop_full_lke}.\textit{2}, together with the fact that left Kan extensions commute with colimits, implies that, for any functor $\phi\colon I\to \Fun(\pos, \C)$ indexed by a finite category $I$, if the values of
$\phi$ are tame, then $\text{colim}_I\phi$ is also tame. Consequently, the category 
$\tame(\pos, \C)$ is closed under finite colimits and the inclusion
$\tame(\pos, \C)\subset \Fun(\pos, \C)$
commutes with finite colimits.
\medskip

It turns out that, if $\C$ is closed under finite limits, then the inclusion
$\tame(\pos, \C)\subset \Fun(\pos, \C)$
also commutes with finite limits. 
To see this, let $\varphi\colon I\to\tame(\pos, \C)$ 
be a functor indexed by a finite category $I$.
Assume that its limit in $\tame(\pos, \C)$ exists and is denoted by 
$\text{\em lim}_I\varphi$.
Let us denote the limit of $\varphi$ in 
$\Fun(\pos, \C)$ by $\lim_I\varphi$.
Recall that $\lim_I\varphi$ is formed objectwise:  $(\lim_I\varphi)(q)$ and $ \lim_I(\varphi(q))$ are isomorphic for all $q$ in $\pos$.
For every object $A$ in $\C$ and every $x$ in $\pos$, since $A(x,-)$ is tame,
by the universal property of limits we have natural bijections between the following sets:

\begin{equation}\label{eq:limit_commutativity}
\begin{tikzcd}[row sep=12pt, column sep = 12pt]
\text{mor}_{\C}\left(A, (\text{\em lim}_I \varphi)(x)\right)\arrow[d, phantom, sloped, "\cong"]  & & 
\text{mor}_{\C}\left(A, (\text{lim}_I \varphi)(x)\right)\arrow[d, phantom, sloped, "\cong"] 
\\
\Nat(A(x,-), \text{\em lim}_I \varphi)
\arrow[r, phantom, sloped, "\cong"] &
\text{lim}_I\Nat(A(x,-), \varphi)
\arrow[r, phantom, sloped, "\cong"] &
\Nat(A(x,-), \text{lim}_I \varphi)
\end{tikzcd}
\end{equation}
We can now use the Yoneda lemma to conclude that 
the natural transformation $\text{lim}_I \varphi\to  \text{\em lim}_I \varphi$ is an isomorphism.
\medskip

The fact that the inclusion 
$\tame(\pos, \C)\subset \Fun(\pos, \C)$ commutes with finite limits does not guarantee  that the category $\tame(\pos, \C)$ is closed under finite limits. 
For this to happen, additional assumptions are necessary.
In the next section, we are going to formulate such assumptions.

\subsection{Transfer}\label{S_transfer}
We aim to identify some posets $\pos$ for which  $\tame(\pos, \C)$ inherits an abelian or a model structure from $\C$. 
The necessary and, as it turns out, often a sufficient requirement for this to happen is that $\tame(\pos, \C)$ is closed under finite limits. To ensure that tameness is preserved by finite limits,  we need a way of constructing left Kan extensions that is compatible with limits. For this, we are going to use transfers.
\medskip

The symbol $\D_\ast$ denotes the poset obtained from a poset $\D$ by adding a global minimum: 
$\D_\ast\coloneqq\D\cup \{-\infty\}$ with the following poset relation:
$-\infty<d$, for every $d$ in $\D$, and  $d\le d'$ in $\D$  if, and only if,  $d\le d'$ in $\D_\ast$.  
Analogously, for a poset functor $\alpha\colon\D\to \pos$, the symbol $\alpha_\ast\colon \D_\ast\to \pos_\ast$ denotes the functor for which  $\alpha_\ast(d)=\alpha(d)$, for all $d$ in $\D$, and $\alpha_*(-\infty)=-\infty$.
Similarly, every functor $X\colon\D\to\C$ extends to a functor denoted by the same
symbol $X\colon\D_\ast\to \C$ 
whose restriction to $\D\subset \D_\ast$ is $X$ and which maps $-\infty$ to
$\varnothing$.
This extension procedure induces a bijection between  the set of all  functors $\Fun(\D, \C)$ and the set 
$\Fun_\ast(\D_\ast, \C)$ of 
functors $Y\colon \D_\ast\to \C$ which map $-\infty$ in $\D_\ast$ to $\varnothing$ in $\C$. 
We use this bijection to identify the sets $\Fun_\ast(\D_\ast, \C)$ and $\Fun(\D, \C)$ in what follows.

\begin{definition}\label{def:transfer}
The \define{transfer} of a poset functor
$\alpha\colon\D\to \pos$
is the poset functor
$\alpha^!\colon \pos_\ast\to \D_\ast$
that is right adjoint to 
$\alpha_\ast\colon\D_\ast\to \pos_\ast$.
If such a right adjoint to $\alpha_\ast$ exists, then $\alpha$ is said to 
\define{admit transfer}.
\end{definition}

If $\alpha\colon\D\to \pos$ has $\beta\colon \pos\to \D$ as the  right adjoint, then
$\beta_\ast\colon \pos_\ast\to \D_\ast$
is the right adjoint to $\alpha_\ast$ and hence it is the transfer of $\alpha$.
However,
admitting a transfer does not guarantee
that $\alpha$ is a left adjoint.
As a simple example, consider the inclusion $\alpha \colon \{b\}\subset 
\{a\leq b\}$.
If $\alpha$ were a left adjoint, 
the sets
$\text{mor}_{\{a\leq b\}}\left(b,a\right)$ and 
$\text{mor}_{\{b\}}\left(b,b\right)$ would have the same number of elements, but the first set is empty and the second is not. 
Nevertheless, $\alpha_\ast$ 
is a left adjoint. Its right adjoint 
(the transfer of $\alpha$) is given by the functor 
$\{-\infty\leq a\leq 
b\}\to \{-\infty\leq b\}$
mapping $-\infty$ and $a$ to $-\infty$, and $b$ to $b$.
\medskip

Here is a basic criterion for when a functor $\alpha^!\colon\pos_\ast\to \D_\ast$ is the transfer of
$\alpha\colon \D\to\pos$.

\begin{proposition}\label{ineq_transfer}
Let $\alpha\colon\D\to \pos$ be a poset functor. A functor $\alpha^!\colon\pos_\ast\to \D_\ast$ is the transfer of $\alpha$ if, and only if, 
$d\le \alpha^!\alpha_\ast(d)$ for all $d$ in $\D_\ast$, and $\alpha_\ast\alpha^!(q)\le q$ for all $q$ in $\pos_\ast$.
\end{proposition}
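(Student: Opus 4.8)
The plan is to use the standard dictionary between adjunctions and Galois connections. Viewing $\D_\ast$ and $\pos_\ast$ as categories, each hom-set has at most one element, so the adjunction condition defining the transfer—namely a natural bijection $\text{mor}_{\pos_\ast}(\alpha_\ast(d), q)\cong \text{mor}_{\D_\ast}(d, \alpha^!(q))$—collapses to the single biconditional
\[
\alpha_\ast(d)\le q \iff d\le \alpha^!(q)
\qquad\text{for all } d\in\D_\ast,\ q\in\pos_\ast.
\]
Naturality is automatic here, since any two morphisms with the same source and target in a poset coincide. Thus the task reduces to showing that this biconditional is equivalent to the two inequalities in the statement, keeping in mind that $\alpha_\ast$ and $\alpha^!$, being poset functors, are monotone.

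For the forward implication I would assume $\alpha^!$ is the transfer, so the biconditional holds, and then simply specialize it. Setting $q=\alpha_\ast(d)$ and using the reflexivity $\alpha_\ast(d)\le \alpha_\ast(d)$ yields $d\le \alpha^!\alpha_\ast(d)$; setting $d=\alpha^!(q)$ and using $\alpha^!(q)\le \alpha^!(q)$ yields $\alpha_\ast\alpha^!(q)\le q$. These are precisely the unit and counit inequalities asserted in the statement.

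For the converse I would assume both inequalities and recover the biconditional using monotonicity. If $\alpha_\ast(d)\le q$, then applying the monotone map $\alpha^!$ and combining with the unit inequality gives $d\le \alpha^!\alpha_\ast(d)\le \alpha^!(q)$. Conversely, if $d\le \alpha^!(q)$, then applying the monotone map $\alpha_\ast$ and combining with the counit inequality gives $\alpha_\ast(d)\le \alpha_\ast\alpha^!(q)\le q$. This establishes the biconditional, hence the adjunction $\alpha_\ast\dashv\alpha^!$, hence that $\alpha^!$ is the transfer of $\alpha$.

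There is no genuine obstacle: the argument is the classical proof that a Galois connection between posets is the same datum as an adjunction between them viewed as categories. The only point worth flagging is that sufficiency of the two inequalities genuinely relies on both $\alpha_\ast$ and $\alpha^!$ being monotone—this is exactly what is built into the hypothesis that $\alpha^!$ is a functor—since without monotonicity the unit and counit inequalities alone would not force the full biconditional.
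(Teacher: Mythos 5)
Your proposal is correct and follows essentially the same route as the paper's proof: both directions are handled identically, specializing the adjunction biconditional at reflexivity to get the unit/counit inequalities, and conversely combining those inequalities with monotonicity of $\alpha_\ast$ and $\alpha^!$ to recover the biconditional. The only cosmetic difference is that you spell out the Galois-connection dictionary (and the automatic naturality) explicitly, which the paper leaves implicit.
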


\begin{proof}
If $\alpha^!$ is the right adjoint to $\alpha_\ast$, then $\alpha_\ast(d)=\alpha_\ast(d)$ implies $d\le \alpha^!\alpha_\ast(d)$, and, since $\alpha_\ast$ is the left adjoint to $\alpha^!$,  then $\alpha^!(q)=\alpha^!(q)$ implies $\alpha_\ast\alpha^!(q)\le q$.

Assume the relations
$d\le \alpha^!\alpha_\ast(d)$ and 
$\alpha_\ast\alpha^!(q)\le q$ hold.
Then 
$d\le \alpha^!(q)$ implies $\alpha_\ast(d)\le \alpha_\ast\alpha^!(q)\le q$, and hence $\alpha_\ast(d)\le q$.
Similarly,  $\alpha_\ast(d)\le q$ yields
$d\le \alpha^!\alpha_\ast(d)\le \alpha^!(q)$, and hence $d\leq \alpha^!(q)$. 
\end{proof}

\begin{corollary}\label{sdgadfgh}
If $\alpha^!\colon \pos_\ast\to \D_\ast$ is the transfer of $\alpha\colon \D\to \pos$, then
\[\alpha^!(q)\text{ is }\begin{cases}
    -\infty & \text{if }q=-\infty\\
    -\infty &  \text{if }q\in \pos \text{ and }\alpha\leq q\text{ is empty}\\
    \text{the terminal object of }
    \alpha\leq q & \text{if }q\in \pos \text{ and }\alpha\leq q\text{ is non-empty}
\end{cases}\]
\end{corollary}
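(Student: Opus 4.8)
The plan is to reduce the statement to the standard Galois-connection description of a right adjoint between posets: since $\alpha^!$ is right adjoint to $\alpha_\ast$, the value $\alpha^!(q)$ must be the greatest element $d$ of $\D_\ast$ satisfying $\alpha_\ast(d)\leq q$. Once this description is secured, the three cases in the statement reduce to an elementary inspection of which $d\in\D_\ast$ satisfy $\alpha_\ast(d)\leq q$. I would introduce the set $F_q\coloneqq\{d\in\D_\ast \mid \alpha_\ast(d)\leq q\}$ and prove $\alpha^!(q)=\max F_q$.

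To establish $\alpha^!(q)=\max F_q$, I would invoke \cref{ineq_transfer}. That $\alpha^!(q)$ itself lies in $F_q$ is exactly the counit inequality $\alpha_\ast\alpha^!(q)\leq q$. For the upper-bound property, if $d\in F_q$, then using the unit inequality together with the fact that $\alpha^!$ is order-preserving (being a poset functor) one gets $d\leq \alpha^!\alpha_\ast(d)\leq \alpha^!(q)$ — precisely the chain of inequalities already appearing in the proof of \cref{ineq_transfer}. Hence $\alpha^!(q)$ is an element of $F_q$ dominating every element of $F_q$, i.e. its maximum. I would also record that $\alpha_\ast(-\infty)=-\infty$, so $-\infty\in F_q$ for every $q$ and $F_q$ is never empty, while for $d\in\D$ membership reads $\alpha(d)\leq q$, i.e. $d\in(\alpha\leq q)$.

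The case analysis then proceeds directly. If $q=-\infty$, no $d\in\D$ satisfies $\alpha(d)\leq-\infty$ (as $\alpha(d)\in\pos$), so $F_q=\{-\infty\}$ and $\alpha^!(-\infty)=-\infty$. If $q\in\pos$ with $\alpha\leq q$ empty, again $F_q\cap\D=(\alpha\leq q)=\varnothing$, giving $F_q=\{-\infty\}$ and $\alpha^!(q)=-\infty$. The step I expect to require the most care is the remaining case $q\in\pos$ with $\alpha\leq q$ nonempty: here $F_q=\{-\infty\}\cup(\alpha\leq q)$, and one must rule out that the maximum collapses to $-\infty$ and show it coincides with the terminal object of the subposet $\alpha\leq q$. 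Choosing any $d_0\in(\alpha\leq q)$ forces $\max F_q\geq d_0>-\infty$, so the maximum lies in $\alpha\leq q$ and dominates all of $\alpha\leq q$, which is exactly the terminal object of $\alpha\leq q$. Note that the existence of this terminal object is not assumed a priori but is forced by the existence of the transfer, since $\alpha^!(q)=\max F_q$ exists and, as shown, is a greatest element of $\alpha\leq q$; the remaining bookkeeping is routine.
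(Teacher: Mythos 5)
Your proof is correct and follows essentially the same route as the paper's: both arguments rest on the two inequalities of \cref{ineq_transfer} together with monotonicity of $\alpha^!$, namely the counit relation $\alpha_\ast\alpha^!(q)\leq q$ (forcing $\alpha^!(q)\in\alpha\leq q$ whenever $\alpha^!(q)\neq-\infty$) and the chain $d\leq \alpha^!\alpha_\ast(d)\leq \alpha^!(q)$ for $d$ in $\alpha\leq q$. Your packaging of these facts as the Galois-connection identity $\alpha^!(q)=\max\{d\in\D_\ast\mid \alpha_\ast(d)\leq q\}$ before the case split is only a cosmetic reorganization of the paper's inline argument.
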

\begin{proof}
The relation $\alpha_{\ast}\alpha^!(q)\leq q$ (see \cref{ineq_transfer}) implies that
if $\alpha^!(q)$ is in $\D\subset \D_\ast$ then $q\not = -\infty $ and
$\alpha^!(q)$ is in $\alpha\leq  q$.
Moreover, for every $d$ in $\alpha\leq  q$, combining $d\leq \alpha^!\alpha(d)$ (see again \cref{ineq_transfer}) with
$\alpha^!$ applied to $\alpha(d)\leq q$ yields:
$d\leq \alpha^!\alpha(d)\leq \alpha^!(q)$, which means  $\alpha^!(q)$ is the terminal object in $\alpha\leq  q$.
\end{proof}

By~\cref{sdgadfgh}, a transfer maps $-\infty$ to $-\infty$. The precomposition with
$\alpha^!\colon\pos_\ast\to \D_\ast$  induces therefore a functor 
$(-)^{\alpha^!}\colon \Fun_\ast(\D_\ast, \C)\to \Fun_\ast(\pos_\ast, \C)$. Since 
$\Fun_\ast(\D_\ast, \C)$ and $\Fun_\ast(\pos_\ast, \C)$ are identified with
$\Fun(\D, \C)$ and $\Fun(\pos, \C)$, respectively,
we consider $(-)^{\alpha^!}$  as a functor $\Fun(\D, \C)\to \Fun(\pos, \C)$. 
Although via this identification $(-)^{\alpha^!}$ may fail to be the precomposition with 
any functor $\pos\to\D$, it inherits the following property of functors formed by precompositions: if $\C$ is closed under (finite) limits, then $(-)^{\alpha^!}\colon \Fun(\D, \C)\to \Fun(\pos, \C)$ commutes  
with (finite) limits.

\begin{corollary}\label{dsgdfh}
Let $\C$ be a category closed under finite colimits and $\alpha\colon\D\to \pos$ be a poset functor with finite $\D$.
If $\alpha^!\colon \pos_\ast\to \D_\ast$ is the transfer of $\alpha\colon \D\to \pos$, then
\begin{enumerate}
\item The functor $(-)^{\alpha^!}$ is naturally isomorphic to 
$ \alpha^k\colon \Fun(\D, \C)\to \Fun(\pos, \C)$.
\item If $\C$ is closed under (finite) limits, then $ \alpha^k\colon \Fun(\D, \C)\to \Fun(\pos, \C)$ commutes with
(finite) limits.
\item The full subposet inclusion $\alpha(\D)\subset\pos$ admits a transfer.
\end{enumerate}
\end{corollary}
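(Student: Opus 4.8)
```latex
The plan is to prove the three parts of \cref{dsgdfh} in the order stated, since each builds on the previous one. For part \textit{1}, I would compare the two functors $\alpha^k$ and $(-)^{\alpha^!}$ objectwise. By the explicit colimit formula recalled at the start of \cref{sec_tameness}, for $Y\colon \D\to \C$ and $x$ in $\pos$ we have $\alpha^k Y(x)\cong \colim_{\alpha\le x}Y$. On the other side, $((-)^{\alpha^!}Y)(x)=Y(\alpha^!(x))$ where I read $Y$ as the extended functor $\D_\ast\to \C$ sending $-\infty$ to $\varnothing$. Now \cref{sdgadfgh} computes $\alpha^!(x)$ explicitly in the three cases. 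When $\alpha\le x$ is empty, $\alpha^!(x)=-\infty$, so $Y(\alpha^!(x))=\varnothing$, which matches the empty colimit. When $\alpha\le x$ is non-empty, $\alpha^!(x)$ is its terminal object $t$; since $t$ is terminal in the indexing poset $\alpha\le x$, the colimit $\colim_{\alpha\le x}Y$ is simply $Y(t)=Y(\alpha^!(x))$. Thus the two functors agree objectwise, and I would check that this identification is natural in $x$ and in $Y$, giving the desired natural isomorphism.

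For part \textit{2}, I would invoke part \textit{1} together with the property highlighted in the paragraph just before the corollary: because $(-)^{\alpha^!}$ is induced by precomposition with the poset map $\alpha^!\colon \pos_\ast\to \D_\ast$ (under the identifications $\Fun_\ast(\D_\ast,\C)\cong \Fun(\D,\C)$ and likewise for $\pos$), and precomposition functors preserve limits, the functor $(-)^{\alpha^!}$ commutes with (finite) limits whenever $\C$ is closed under (finite) limits. Since part \textit{1} gives a natural isomorphism $\alpha^k\cong (-)^{\alpha^!}$, the functor $\alpha^k$ inherits this limit-preservation property. This step is essentially formal once part \textit{1} is in place.

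For part \textit{3}, I would construct the transfer of the full subposet inclusion $\iota\colon \alpha(\D)\subset \pos$ directly and verify it via the criterion of \cref{ineq_transfer}. Writing $\D'\coloneqq \alpha(\D)$, I expect the natural candidate $\iota^!\colon \pos_\ast\to \D'_\ast$ to send $-\infty$ to $-\infty$, to send $q$ to $-\infty$ when $\D'\le q=\{d'\in \D'\mid d'\le q\}$ is empty, and otherwise to send $q$ to the maximum of $\D'\le q$. The subtlety is that this maximum must exist: I would derive its existence from the existence of $\alpha^!$, observing that $\alpha^!(q)$ (when it lies in $\D$) maps under $\alpha$ into $\D'\le q$ and is terminal there by \cref{sdgadfgh}, so $\alpha(\alpha^!(q))$ is the required maximum of $\D'\le q$. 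Having defined $\iota^!$, I would then check the two inequalities $d'\le \iota^!\iota_\ast(d')$ and $\iota_\ast\iota^!(q)\le q$ of \cref{ineq_transfer}, both of which follow from the terminality/maximality property just established.

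The main obstacle I anticipate is in part \textit{1}: carefully justifying that a colimit over a poset with a terminal object reduces to the value at that terminal object, and then checking naturality of the resulting isomorphism in $x$ across the order relations of $\pos$ (in particular that the identification is compatible with the structure maps $\alpha^k Y(x)\to \alpha^k Y(x')$ for $x\le x'$, which on the $(-)^{\alpha^!}$ side are $Y(\alpha^!(x))\to Y(\alpha^!(x'))$ induced by the monotone map $\alpha^!$). This bookkeeping is routine but is where the real content lies; parts \textit{2} and \textit{3} are then comparatively mechanical, with part \textit{3} requiring only the extraction of the maximum of $\D'\le q$ from the already-assumed transfer $\alpha^!$.
```
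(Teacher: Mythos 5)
Your proposal is correct and takes essentially the same route as the paper: part \textit{1} via the explicit colimit formula together with \cref{sdgadfgh} (the colimit over a poset with a terminal object is the value there, and the empty colimit is $\varnothing$), part \textit{2} by transporting the limit-preservation of precomposition across the isomorphism of part \textit{1}, and part \textit{3} by producing the transfer of $\alpha(\D)\subset\pos$ and verifying it against \cref{ineq_transfer}. Note that your candidate ``maximum of $\alpha(\D)\le q$'' is precisely the paper's composite $\alpha_\ast\alpha^!\colon \pos_\ast\to\alpha(\D)_\ast$, so the two constructions coincide.
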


\begin{proof}
The first statement follows from \cref{sdgadfgh} and the formula for left Kan extensions recalled at the beginning of \cref{sec_tameness}, identifying the values of the left Kan extension $\alpha^k Y(x)$ with $\colim_{\alpha\le x}Y$.
The second statement is obtained from the first one by observing that precompositions commute with finite limits whenever they exist.

To show the third part just note that
the criterion in~\cref{ineq_transfer} for $\alpha^!$ to be the transfer of $\alpha$
directly implies this criterion for the following 
composition to be the transfer of 
$\alpha(\D)\subset\pos$:
\[
\begin{tikzcd}
    Q_\ast\ar{r}{\alpha^!} & \D_\ast\ar{r}{\alpha_\ast} & \alpha(\D)_\ast
\end{tikzcd}\qedhere
\]
\end{proof}

\cref{dsgdfh}, together with \Cref{prop_full_lke}.\textit{1}, allows us to restrict our attention to tame functors discretized along an inclusion of a finite full subposet of the indexing poset. 
We can now prove that, under suitable hypotheses, tameness commutes with finite limits. 

\begin{proposition}\label{prop_limtame}
Let $\C$ be a category closed under finite colimits and limits, $I$ a finite category, $\pos$ a poset, and $\phi\colon I\to
\Fun(\pos,\C)$  a functor. 
Assume there is a poset functor $\alpha\colon\D\to \pos$ with finite $\D$, admitting a transfer, and which discretizes $\phi(i)$ for all $i$ in $I$.
Then $\text{\rm lim}_I\phi$ is tame.
\end{proposition}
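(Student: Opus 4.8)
The plan is to exhibit $\lim_I\phi$ as the left Kan extension along $\alpha$ of a single functor $\D\to\C$, which is precisely the definition of tameness. The entire argument rests on the fact that, under the transfer hypothesis, $\alpha^k$ commutes with finite limits (\cref{dsgdfh}.\textit{2}), so that a limit of left Kan extensions is again the left Kan extension of a limit.

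First I would reduce to the case where $\alpha$ is a finite full subposet inclusion. By \cref{prop_full_lke}.\textit{1}, each $\phi(i)$ is discretized by the finite full subposet $\alpha(\D)\subset\pos$, and by \cref{dsgdfh}.\textit{3} the inclusion $\alpha(\D)\subset\pos$ again admits a transfer. Replacing $\alpha$ by this inclusion, I may therefore assume that $\alpha\colon\D\subset\pos$ is a finite full subposet inclusion, admitting a transfer, that discretizes every $\phi(i)$. This reduction is what makes \cref{prop_full_lke}.\textit{4} applicable in the next step.

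Next I would restrict the diagram. Set $\psi\coloneqq(-)^\alpha\circ\phi\colon I\to\Fun(\D,\C)$, so that $\psi(i)=\phi(i)^\alpha$. Since $\C$ is closed under finite limits and $I$ is finite, the functor category $\Fun(\D,\C)$ is closed under finite limits, so $Y\coloneqq\lim_I\psi$ exists as an object of $\Fun(\D,\C)$; this $Y$ is the candidate discretization. To verify it, I would run the chain of isomorphisms
\[
\alpha^k Y=\alpha^k\bigl(\lim_I\psi\bigr)\cong\lim_I\bigl(\alpha^k\psi\bigr)\cong\lim_I\phi,
\]
where the first isomorphism is \cref{dsgdfh}.\textit{2} (the transfer hypothesis lets $\alpha^k$ commute with the finite limit over $I$), and the second comes from the counit isomorphisms $\alpha^k(\phi(i)^\alpha)\xrightarrow{\cong}\phi(i)$ of \cref{prop_full_lke}.\textit{4}. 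These counit maps are natural in $i$, hence assemble into an isomorphism of $I$-diagrams $\alpha^k\circ\psi\cong\phi$ that is preserved by $\lim_I$. Thus $\lim_I\phi\cong\alpha^k Y$ with $\D$ finite, and $\lim_I\phi$ is tame by definition.

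The only genuinely nontrivial input is the commutation of $\alpha^k$ with finite limits, and this is exactly where the transfer assumption is indispensable: \cref{dsgdfh}.\textit{1} identifies $\alpha^k$ with the precomposition-type functor $(-)^{\alpha^!}$, and precompositions commute with whatever limits exist in the target. Everything else is formal — the naturality of the Kan extension counit and the fact that $\lim_I$ preserves isomorphisms of diagrams — so I expect the commutation step to be the conceptual heart of the proof while the remaining manipulations are routine.
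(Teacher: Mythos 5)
Your proof is correct and follows essentially the same route as the paper's: reduce to the full subposet inclusion $\alpha(\D)\subset\pos$ via \cref{dsgdfh}.\textit{3}, use the transfer to commute $\alpha^k$ with the finite limit (\cref{dsgdfh}.\textit{2}), and identify the result with $\lim_I\phi$ through the counit isomorphisms of \cref{prop_full_lke}.\textit{4}. The only cosmetic difference is that you conclude directly from the definition of tameness, exhibiting $\lim_I\bigl(\phi(-)\alpha\bigr)$ as an explicit discretization, whereas the paper phrases the same fact as the counit $\alpha^k\bigl((\lim_I\phi)\alpha\bigr)\to\lim_I\phi$ being an isomorphism and invokes \cref{prop_full_lke}.\textit{5}.
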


\begin{proof}
According to \cref{dsgdfh}.\textit{3}, the inclusion of the full subposet $\alpha(\D)\subset \pos$ satisfies the same assumptions as $\alpha$. 
We can thus assume that $\alpha$ is an inclusion of a full subposet $\D\subset \pos$. 
 Under this assumption, by \cref{prop_full_lke}.\textit{4}, the natural transformation $\alpha^k(\phi(i)\alpha)\to \phi(i)$ is an isomorphism, for every $i$. 
Since $\alpha$ admits a transfer, we can apply \cref{dsgdfh} and obtain  isomorphisms $\alpha^k((\text{lim}_I\phi)\alpha)\to\lim_I (\alpha^k(\phi(-)\alpha))\to  \text{lim}_I\phi $.
Applying \cref{prop_full_lke}.\textit{5}, we can conclude that $\text{lim}_I\phi
$ is tame.
\end{proof}

We finish this section with a fundamental property of tame functors discretized by functors admitting a transfer.

\begin{corollary}\label{cor_closedcolim}
Let $\C$ be a category closed under finite colimits and limits. Assume  $\pos$ is a poset for which every tame functor $X\colon \pos\to\C$
can be discretized by a functor admitting a transfer.
Then $\tame(\pos, \C)$ is closed under finite colimits and limits.
\end{corollary}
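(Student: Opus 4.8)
The plan is to treat the two closure properties separately, since only the closure under finite limits requires the transfer hypothesis.

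For finite colimits, nothing beyond the discussion following \cref{prop_full_lke} is needed: by \cref{prop_full_lke}.\textit{2} a finite collection of tame functors admits a common discretization, and left Kan extensions commute with colimits, so for any $\phi\colon I\to\tame(\pos,\C)$ with $I$ finite the object $\colim_I\phi$ is again a left Kan extension along a functor from a finite poset, hence tame. This step uses only that $\C$ is closed under finite colimits, not the transfer assumption.

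For finite limits, fix a finite category $I$ and a functor $\phi\colon I\to\tame(\pos,\C)$. The goal is to supply the hypotheses of \cref{prop_limtame}, namely a single poset functor $\alpha\colon\D\to\pos$ with $\D$ finite, admitting a transfer, that discretizes $\phi(i)$ for every $i$; once this is in hand, \cref{prop_limtame} immediately yields that $\lim_I\phi$ is tame, which is the desired closure. First I would form the finite coproduct $X\coloneqq\coprod_{i\in I}\phi(i)$, which is tame by the colimit part just established. Applying the standing hypothesis to $X$ produces a discretizing functor admitting a transfer, and by \cref{dsgdfh}.\textit{3} I may replace it by a finite full subposet inclusion $\alpha\colon\D\subset\pos$ admitting a transfer that still discretizes $X$. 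Note that the union $\bigcup_i\D_i$ of individual transfer-admitting discretizations need not admit a transfer (the sets $\bigcup_i\D_i\leq q$ may fail to have a maximum), which is precisely why I route through the single tame functor $X$ rather than combining separate discretizations.

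It then remains to check that this single $\D$ discretizes each summand $\phi(i)$ individually. By \cref{prop_full_lke}.\textit{5} this is equivalent to the counit $\alpha^k(\phi(i)\alpha)\to\phi(i)$ being an isomorphism. Since $\alpha^k$ is a left adjoint and restriction along $\alpha$ is computed pointwise, both preserve coproducts, so the counit $\alpha^k(X\alpha)\to X$ is naturally identified with $\coprod_{i\in I}\bigl(\alpha^k(\phi(i)\alpha)\to\phi(i)\bigr)$, which is an isomorphism by construction. I expect this summand-detection step — deducing that each component map is an isomorphism from the fact that their coproduct is — to be the main point requiring justification, and it is exactly where the structure of $\C$ enters: in the additive, in particular abelian, setting relevant to the applications a finite coproduct of morphisms is an isomorphism if and only if each component is, whence every counit $\alpha^k(\phi(i)\alpha)\to\phi(i)$ is an isomorphism. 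With all $\phi(i)$ discretized by the transfer-admitting $\alpha$, \cref{prop_limtame} applies and shows $\lim_I\phi$ is tame, completing the argument.
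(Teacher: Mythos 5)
Your overall route is the same as the paper's: the colimit half is exactly the discussion preceding the corollary, and the limit half reduces to \cref{prop_limtame}. Where you go beyond the paper is in making explicit a bridging step that the paper's proof passes over in silence: the corollary's hypothesis provides a transfer-admitting discretization for each tame functor separately, while \cref{prop_limtame} demands a \emph{single} transfer-admitting $\alpha$ discretizing every $\phi(i)$ at once. Your device---apply the hypothesis to the tame coproduct $\coprod_{i\in I}\phi(i)$, pass to a full subposet inclusion via \cref{dsgdfh}.\textit{3}, and then detect that this discretization also discretizes each summand---is a natural way to close this, and you correctly isolate that it hinges on the principle that a finite coproduct of morphisms is an isomorphism only if each component is.

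The problem is that this principle is not available at the stated level of generality, and you say so yourself: you invoke it only in the ``additive, in particular abelian, setting''. The corollary assumes merely that $\C$ is closed under finite colimits and limits, and for such $\C$ summand detection can genuinely fail: in the category of commutative unital rings, finite coproducts are tensor products over $\mathbb{Z}$, so taking $f$ to be the identity of the zero ring and $g\colon B\to B'$ arbitrary, $f\sqcup g$ is an isomorphism $0\to 0$ whether or not $g$ is one. This is not a pedantic point for this paper, because \cref{cor_closedcolim} is invoked for an arbitrary model category $\C$ to obtain (MC1) in \cref{th_tame_mod}, where no additivity is available. So, as a proof of the statement as written, your argument has a gap at exactly this step. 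To your credit, the paper's own proof never addresses the bridging step at all, so you have located a real imprecision; note, however, that in the paper's examples the gap closes for poset-theoretic rather than categorical reasons: the transfer-admitting subposets produced in \cref{afdfhsfg} and \cref{adfgsdfjdghn} (the $\R(\pos,\D,V)$) have the property that finitely many of them are contained in a common one (enlarge $\D$ to the closure of the union and take the union of the $V$'s), so by \cref{prop_full_lke}.\textit{1} finitely many tame functors automatically admit a common transfer-admitting discretization, with no hypothesis on $\C$ beyond closure under finite colimits. A complete repair of the corollary in full generality would either argue along those lines (i.e., strengthen the hypothesis to the existence of common transfer-admitting discretizations, which is what \cref{prop_limtame} really consumes) or restrict $\C$ as you do.
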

\begin{proof}
Let 
$\phi\colon I\to
\tame(\pos, \C)$ be a functor 
indexed by a finite category $I$.
We have already proved  that 
its colimit in the category $\Fun(\pos, \C)$ is tame and coincides with  the colimit in the category
$\tame(\pos, \C)$ (see the discussion at the beginning of this section). 

Concerning limits, we use the same argument since, under the hypothesis that every tame functor in $\tame(\pos, \C)$
can be discretized along a functor admitting a transfer, 
the limit of $\phi$ in  $\Fun(\pos, \C)$
is also tame (see~\cref{prop_limtame}). This limit is therefore the limit
of $\phi$ in $\tame(\pos, \C)$.
\end{proof}

See \cref{sec:realizations} for key examples of posets satisfying the assumption of \cref{cor_closedcolim}.

\section{Abelian structure on tame functors}\label{sec:tame_ab}
The goal of this section is to show:

\begin{theorem}\label{th_tame_ab}
Let $\C$ be an abelian category and $\pos$  a poset. Assume that every functor in $\tame(\pos, \C)$ can be discretized by a poset functor admitting a transfer.  Then:
\begin{enumerate}
    \item  The category $\tame(\pos, \C)$ is abelian.
    \item If $\C$ has enough projectives, then so does $\tame(\pos, \C)$.
    \item If  $\C$ satisfies the minimal cover axiom, then so does $\tame(\pos, \C)$.
    \item If $\C$ has enough projectives and $\pos$ is of dimension at most $1$,
    then:
    \[\text{\rm proj-dim}(\tame(\pos, \C))\leq \text{\rm proj-dim}(\C)+1.\]
\end{enumerate}
\end{theorem}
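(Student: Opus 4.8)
The plan is to transport the question to the finite functor categories $\Fun(\D,\C)$ and then bound their projective dimension combinatorially, the passage between the two being controlled by the transfer machinery. Fix $X$ in $\tame(\pos,\C)$. By hypothesis $X$ is discretized by some $\alpha\colon\D\to\pos$ admitting a transfer with $\D$ finite, and by \cref{dsgdfh}.\textit{3} together with \cref{prop_full_lke}.\textit{1} I may assume that $\alpha$ is the inclusion of a finite full subposet $\D\subset\pos$ which still admits a transfer; then \cref{prop_full_lke}.\textit{4} gives an isomorphism $X\cong\alpha^k(X\alpha)$. The crucial point is that $\alpha^k$ is exact and preserves projectives: it is a left adjoint, hence right exact, and by \cref{dsgdfh}.\textit{1}--\textit{2} it is isomorphic to the precomposition $(-)^{\alpha^!}$, so it also commutes with finite limits and in particular with kernels; moreover it sends a free functor $A(d,-)$ on $\D$ with $A$ projective to the homogeneous free functor $A(\alpha(d),-)$ on $\pos$, which is projective in $\tame(\pos,\C)$ by part \textit{2}. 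Hence $\alpha^k$ carries a projective resolution of $X\alpha$ in $\Fun(\D,\C)$ to one of $X$ in $\tame(\pos,\C)$ of the same length, giving $\text{proj-dim}_{\tame(\pos,\C)}(X)\le\text{proj-dim}_{\Fun(\D,\C)}(X\alpha)$.

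Since the dimension of a full subposet never exceeds that of the ambient poset, every such $\D$ is a finite poset of dimension at most $1$. Taking the supremum over all $X$, it therefore suffices to prove the finite statement
\[
\text{proj-dim}\bigl(\Fun(\D,\C)\bigr)\le\text{proj-dim}(\C)+1\qquad\text{for every finite poset }\D\text{ of dimension at most }1.
\]
Write $p\coloneqq\text{proj-dim}(\C)$. For a single object $A$ of $\C$ the induction $A\mapsto A(d,-)$ is exact and sends projectives of $\C$ to projectives of $\Fun(\D,\C)$, so $\text{proj-dim}_{\Fun(\D,\C)}\bigl(A(d,-)\bigr)\le\text{proj-dim}_\C(A)\le p$, and the same bound holds for any finite direct sum of such free functors. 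Given an arbitrary $X\colon\D\to\C$, I would form the two free functors $V\coloneqq\bigoplus_{d}X(d)(d,-)$ and $E\coloneqq\bigoplus_{d\lessdot d'}X(d)(d',-)$ (the second sum over the covering relations of $\D$), together with the canonical augmentation $V\to X$ and the incidence differential $E\to V$ read off from the Hasse diagram. The dimension-one hypothesis is exactly what makes
\[
0\longrightarrow E\longrightarrow V\longrightarrow X\longrightarrow 0
\]
exact, so that it is a length-one resolution of $X$ by functors of projective dimension at most $p$; the standard estimate $\text{proj-dim}(X)\le\max\bigl(\text{proj-dim}(V),\,\text{proj-dim}(E)+1\bigr)$ then gives $\text{proj-dim}(X)\le p+1$, as desired.

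The reduction in the first paragraph is essentially formal once the exactness and projective-preservation of $\alpha^k$ are recorded, and both properties come straight from the transfer results. The main obstacle is therefore the exactness of the incidence sequence $0\to E\to V\to X\to 0$ in the finite case, which is the sole point at which dimension at most $1$ is used and which I would verify objectwise, reducing at each $d_0$ to the acyclicity of the incidence complex of the subposet $\D_{\le d_0}$. This is precisely the step that breaks down in higher dimension---for a commutative square the incidence complex acquires a nonzero second syzygy---and it is the obstruction that the counterexample of \cref{diag_the_counterexample} is built to exhibit.
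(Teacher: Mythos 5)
Your proposal proves only statement \textit{4} of the theorem; statements \textit{1}--\textit{3} are never argued, and your own argument leans on them (the projectivity of homogeneous free functors in $\tame(\pos,\C)$ is part of the content of statement \textit{2}, and talking about exact sequences and kernels in $\tame(\pos,\C)$ presupposes statement \textit{1}). Invoking \textit{1}--\textit{2} inside a proof of \textit{4} is legitimate, since the statement of \textit{4} only makes sense once they hold, but as a proof of the theorem as stated this is a genuine gap: the paper spends most of its effort precisely on \textit{1} (closure under finite limits, via \cref{cor_closedcolim}), on \textit{2}, and especially on \textit{3}, where minimality requires the explicit free cover $\bigoplus_{z\in\D}P_z(z,-)\to X$ of \cref{daDGSFJKJ} together with the argument that minimality survives left Kan extension along an inclusion admitting a transfer.

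For statement \textit{4} itself your argument is correct, and the reduction to finite posets is exactly the paper's: by \cref{dsgdfh} and \cref{prop_full_lke} one may assume $\alpha$ is a finite full subposet inclusion admitting a transfer, and then $\alpha^k$ is exact (being a precomposition with the transfer) and preserves projectives, so it transports resolutions without changing their length. Where you genuinely diverge is the finite case. You build the explicit incidence resolution $0\to E\to V\to X\to 0$ by induced functors read off from the Hasse diagram and finish with the standard bound $\text{proj-dim}(X)\le\max\left(\text{proj-dim}(V),\text{proj-dim}(E)+1\right)$; the paper instead runs an induction on $m_Y=\max_x\text{proj-dim}(H_0^x(Y))$ for functors with $H_1^x(Y)=0$, using the six-term sequence of \cref{sfsfdhfgs}.\textit{2}, the vanishing of $H_1^x$ on free functors (\cref{sdgdfhfsj}), and the characterization of projectives (\cref{afashdsfjg}.\textit{2}). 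The dimension-one hypothesis enters both proofs at the analogous spot: you need each $\D_{\le d_0}$ to be a tree rooted at $d_0$ (in dimension at most $1$ every $d<d_0$ has a unique cover below $d_0$), so that the augmented incidence complex is objectwise acyclic by leaf-removal induction --- you only sketch this step, but it is correct and standard --- while the paper needs $H_1^x$ of free functors to vanish, which is the same combinatorial fact in different clothing. Your route is shorter and more self-contained if one wants \textit{4} alone; the paper's route reuses the $H_0^x/H_1^x$ machinery it has already built because that machinery is also what delivers statements \textit{2} and \textit{3}.
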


Recall that if $\D$ is a small category, then 
the category of functors $\Fun\left(\D,\C\right)$ inherits the abelian structure from $\C$. 
Thus most of the work needed to prove the first statement of \Cref{th_tame_ab} has been done in \cref{S_transfer}, by showing the existence of finite (co)limits in $\tame(\pos,\C)$ under the assumption that its functors can be discretized along poset functors admitting transfers. 

\begin{proof}[Proof of \Cref{th_tame_ab}.\textit{1}]
The category $\Fun(\pos, \C)$ is abelian since $\pos$ is small and $\C$ is abelian. 
As $\tame(\pos, \C)$ is a full subcategory of $\Fun(\pos, \C)$, its morphism sets are abelian groups and the composition of morphisms is bilinear. 
Moreover, under our hypotheses and by~\cref{cor_closedcolim}, the category 
$\tame(\pos, \C)$ is closed under finite limits and colimits, and the inclusion $\tame(\pos, \C)\subset \Fun(\pos, \C)$ commutes with these operations (see the discussion about Diagram \eqref{eq:limit_commutativity}). 
In particular, the $0$ object, kernels, and cokernels of morphisms in $\tame(\pos, \C)$ exist and they coincide with the analogous objects in $\Fun(\pos, \C)$. 
Consequently, the category  $\tame(\pos, \C)$ is abelian.
\end{proof}

The last three statements of \cref{th_tame_ab} are going to be proven in two steps. 
The first step is to reduce to the case of $\pos$ being finite.

\begin{proof}[Proofs of \cref{th_tame_ab}.\textit{2-3}  assuming they are true for finite posets]
Consider a finite full subposet inclusion $\alpha\colon \D\subset \pos$  that   discretizes a tame functor $X\colon\pos\to \C$ 
(see~\cref{prop_full_lke}.\textit{1}). 
Recall that the natural transformation $\alpha^k (X\alpha)\to X$ (adjoint to the identity) is an isomorphism (see \cref{prop_full_lke}.\textit{4}).
Let $p\colon P_0\to X\alpha$ be a (minimal) projective cover in $\Fun(\D,\C)$, which exists by our assumption that statements $\textit{2}$ and $\textit{3}$ of the theorem are true for finite posets.
Since left Kan extensions preserve the properties of being an epimorphism and of being projective, the morphism $\alpha^k p\colon \alpha^k P_0\to \alpha^k (X\alpha)\cong X$ is a projective cover of $X$ in $\tame(\pos,\C)$. That proves statement~\textit{2}.

To show statement~\textit{3}, we are going to prove that  $\alpha^k p$ is minimal if $p$ is minimal. 
Assume $p$ is minimal and consider an endomorphism $\varphi\colon \alpha^k P_0\to \alpha^k P_0$ such that $(\alpha^k p) \varphi=\alpha^k p$. 
Precomposing with $\alpha$ gives the following diagram
\[
\begin{tikzcd}
P_0\arrow[rr, "\cong"]\arrow[d, swap, "\cong"] & & P_0\arrow[d, "\cong"]
\\
(\alpha^k P_0)\alpha\arrow[d, two heads]\arrow[dr, two heads]\arrow[rr, "\psi=\varphi^{\alpha}"] & & (\alpha^k P_0)\alpha\arrow[d, two heads]\arrow[dl, two heads]
\\
(\alpha^k (X\alpha)) \alpha\arrow[r, pos=.3,"\cong"] & X\alpha &(\alpha^k (X\alpha)) \alpha\arrow[l, pos=.3,"\cong"']
 \end{tikzcd}
\]
where the isomorphisms from bottom to top are given, respectively, by \cref{prop_full_lke}.\textit{4}, \cref{prop_full_lke}.\textit{3}, and the minimality of $p\colon P_0\to X\alpha$.
This implies that $\psi$ is also an isomorphism and, thus, its left Kan extension $\alpha^k\psi=\alpha^k(\varphi^\alpha)$ is an isomorphism.
We can  conclude that $\varphi$ is an isomorphism by \cref{prop_full_lke}.\textit{6}.
\end{proof}

\begin{proof}[Proof of \cref{th_tame_ab}.\textit{4} under the assumption that it is true for finite posets]
Consider a finite full subposet inclusion $\alpha\colon \D\subset \pos$  that admits transfer and  discretizes a tame functor $X\colon\pos\to \C$ 
(see~\cref{dsgdfh}.\textit{3}). 
Let $P\to X\alpha$ be a projective resolution in $\Fun(\D,\C)$ of length smaller or equal to $\text{\rm proj-dim}(\C)+1$ which exists since $\D$ has dimension at most $1$ and we are assuming that the statement is true for finite posets. 
Recall that left Kan extensions in general preserve the property of being projective. 
Moreover, in this case, the functor $\alpha^k$ is also exact, because precomposing with any functor, in particular a transfer, is exact. 
Consequently, $\alpha^k(P)\to \alpha^k(X\alpha)\cong X$ is a projective resolution of $X$ in 
$\tame(\pos, \C)$ of length smaller or equal to 
$\text{\rm proj-dim}(\C)+1$.
\end{proof}

It remains to prove the last three statements of~\Cref{th_tame_ab} for finite posets. 
The strategy is to use the following standard construction.
Let $\D$ be a finite poset, $x$ an element in $\D$, and $0\leq 1$ be the poset with two elements and the indicated relation.
Define ${\mathcal K}^x\colon \Fun(\D,\C)\to \Fun(0\leq 1,\C)$ to be the functor that assigns to a natural transformation $f\colon Y\to X$ in $\Fun(\D,\C)$ the following morphism in  $\Fun(0\leq 1,\C)$:

\[
\begin{tikzcd}[column sep=5em, row sep=2em]
{\mathcal K}^x(Y)\ar[swap]{d}{{\mathcal K}^x(f)}\ar[phantom, "\coloneqq"]{r}  &
\displaystyle \bigoplus_{y\in \mathcal{P}(x)} Y(y)\ar{rr}{  \Sigma_{y\in \mathcal{P}(x)}Y(y\leq x)}
\ar[d, "\bigoplus_{y\in \mathcal{P}(x)}f(y)", swap]
& & \displaystyle  Y(x)\ar{d}{f(x)}
\\
{\mathcal K}^x(X)\ar[phantom, "\coloneqq"]{r}  &
\displaystyle \bigoplus_{y\in \mathcal{P}(x)} X(y)\ar{rr}{  \Sigma_{y\in \mathcal{P}(x)}X(y\leq x)} & & \displaystyle  X(x)
\end{tikzcd}
\]
By composing ${\mathcal K}^x$ with the (co)kernel of the transition morphism in $\mathcal{K}^x(-)$,
we obtain two functors which we denote by $H_0^x$ and $H_1^x$ :
\[
\begin{tikzcd}
 & & \C
 \\
\Fun(\D,\C)\ar{r}{\mathcal{K}^{x}}\ar[bend left=20pt]{urr}{H_1^x} 
\ar[swap, bend right=20pt]{drr}{H_0^x} & \Fun(0\leq 1, \C) \ar[swap]{ru}{\ker} \ar{rd}{\coker}\\
& & \C
\end{tikzcd}
\]

For example, for an object $A$ in $\C$, and an element $z$ in $\D$,  consider the homogenous  free functor $A(z,-)\colon \D\to\C$ (see the beginning of~\Cref{sec_tameness}). 
Then:
\[
H_0^x(A(z,-)) =
\begin{cases}
    A & \text{if } z=x\\
    0  & \text{if } z\not=x
\end{cases}
\ \ \ \ \ \ \ \ \ 
H_1^x(A(z,-))=\begin{cases}
A^{|\{y\in {\mathcal P}(x)\, |\, y\geq z\} |-1} &\text{if } z< x\\
0 & \text{otherwise }
\end{cases}
\]

This in particular implies:

\begin{proposition}\label{sdgdfhfsj}
Let $A$ be an object in an abelian category $\C$. 
Assume $\D$ is a finite poset of dimension at most $1$. 
Then $H_1^x(A(z,-))=0$ for all elements $x$ and  $z$ in $\D$.
\end{proposition}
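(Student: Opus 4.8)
The plan is to deduce the vanishing directly from the explicit computation of $H_1^x(A(z,-))$ displayed just above the statement, reducing everything to a counting fact about covers in a dimension-one poset. Writing $S_z^x \coloneqq \{\, y \in \mathcal{P}(x) : z \le y\,\}$, that formula gives $H_1^x(A(z,-)) \cong A^{\lvert S_z^x\rvert - 1}$ when $z < x$, and $H_1^x(A(z,-)) = 0$ otherwise. Since $A^0 = 0$ in any additive category, the whole proposition reduces to the purely order-theoretic claim that $\lvert S_z^x\rvert = 1$ for every pair $z < x$ in $\D$. If one prefers not to quote the displayed formula, the same reduction is obtained by observing that $\mathcal{K}^x(A(z,-))$ is the fold map $\bigoplus_{y \in S_z^x} A \to A$, $(a_y) \mapsto \sum_y a_y$, whose kernel sits in a split short exact sequence $0 \to A^{\lvert S_z^x\rvert-1} \to A^{\lvert S_z^x\rvert} \to A \to 0$.

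Next I would establish the two inequalities $\lvert S_z^x\rvert \ge 1$ and $\lvert S_z^x\rvert \le 1$. For the lower bound, fix any saturated chain $z = z_0 \lessdot z_1 \lessdot \cdots \lessdot z_k = x$ with $k \ge 1$, which exists since $z < x$ and $\D$ is finite; its penultimate element $z_{k-1}$ is a cover of $x$, hence lies in $\mathcal{P}(x)$, and satisfies $z_{k-1} \ge z$, so $z_{k-1} \in S_z^x$ and $S_z^x \ne \emptyset$. For the upper bound I would argue by contradiction: if $y_1 \ne y_2$ were two distinct elements of $S_z^x$, then appending the covers $y_1 \lessdot x$ and $y_2 \lessdot x$ to saturated chains running from $z$ up to $y_1$ and up to $y_2$ would produce two distinct saturated chains from $z$ to $x$, that is, two distinct simple paths between $z$ and $x$ in the Hasse diagram of $\D$.

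The main obstacle is exactly this upper bound, since it is where the dimension hypothesis must be used. Two distinct simple paths between the same pair of vertices force a cycle in the Hasse diagram, and the content of ``dimension at most $1$'' (the Hasse diagrams being trees, zig-zags, or fences, i.e. forests, as recalled in the introduction) is precisely that no such cycle occurs; equivalently, a dimension-one poset contains no ``diamond'' $z \le y_1, y_2 < x$ with $y_1, y_2$ distinct covers of $x$. Thus $\lvert S_z^x\rvert \le 1$, and combined with the lower bound $\lvert S_z^x\rvert = 1$, whence $H_1^x(A(z,-)) \cong A^0 = 0$; in the remaining cases $z = x$ and $z \not\le x$ the displayed formula already gives $0$. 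I would be careful to pin down, from the paper's actual definition of poset dimension, the precise combinatorial statement being invoked — whether phrased as the forest condition on the Hasse diagram or as the absence of the diamond configuration — since once that is fixed the rest of the argument is routine.
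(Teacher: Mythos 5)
Your overall route is the same as the paper's: the proposition is presented there as an immediate consequence of the displayed formula $H_1^x(A(z,-))\cong A^{|S_z^x|-1}$ for $z<x$ (and $0$ otherwise), where $S_z^x=\{y\in\mathcal{P}(x)\ |\ y\geq z\}$, so everything reduces to the counting claim $|S_z^x|=1$, and your lower bound via the penultimate element of a saturated chain is fine. The gap is in the upper bound. You justify $|S_z^x|\le 1$ by asserting that ``dimension at most $1$'' means the Hasse diagram of $\D$ is a forest, so that two distinct saturated chains from $z$ to $x$ would create a forbidden cycle. That is false for the paper's notion of dimension (\cref{sdgdfhdfgh}): the condition only forces $u$ and $v$ to be comparable when they admit \emph{both} a common lower bound $w$ and a common upper bound $x$. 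The fence poset of \cref{fig_ex_dim_poset}(b) --- which the paper explicitly declares to be of dimension $1$ --- has exactly the relations $b_1<b_3$, $b_1<b_4$, $b_2<b_3$, $b_2<b_4$; its Hasse diagram is a $4$-cycle, yet the dimension condition holds vacuously because $b_1,b_2$ have no common lower bound and $b_3,b_4$ have no common upper bound. So acyclicity of the Hasse diagram is strictly stronger than the hypothesis you are given, and an argument routed through it does not prove the stated proposition. Your parenthetical fallback (no ``diamond'' $z\le y_1,y_2<x$ with $y_1\neq y_2$ distinct covers of $x$) is the correct statement, but it is \emph{not} equivalent to the forest condition (the fence has no diamond yet has a cycle), and you never derive it from the actual definition --- you explicitly defer that check, which is precisely the point where the proof must do its work.

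The repair is two lines, directly from \cref{sdgdfhdfgh}: if $y_1,y_2\in\mathcal{P}(x)$ both satisfy $z\le y_i$, then $z\le y_1\le x$ and $z\le y_2\le x$, so dimension at most $1$ forces $y_1$ and $y_2$ to be comparable; but if, say, $y_1<y_2$, then $y_1<y_2<x$ contradicts $y_1$ being covered by $x$, hence $y_1=y_2$. Substituting this for the cycle argument makes your proof complete and essentially identical to the paper's implicit one.
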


The functors $\mathcal{K}^x$, $H_0^x$, and $H_1^x$ enjoy the following  properties:
\begin{proposition}\label{sfsfdhfgs}
    Let $\C$ be an abelian category and $\D$ a finite poset. Then
\begin{enumerate}
    \item For every $x$ in $\D$, the functor ${\mathcal K}^x\colon \Fun(\D,\C)\to \Fun(0\leq 1,\C)$  is  exact.
    \item  For every $x$ in $\D$, a short  exact sequence $0\to Z\to Y\to X\to 0$ in $\Fun(\D,\C)$
    leads to an exact sequence in $\C$ of the form:
    \[\begin{tikzcd}
 0\rar &     H_1^x(Z) \rar & H_1^x(Y) \rar &H_1^x(X) \ar[out=-30, in=150]{dll}
 \\      
&  H_0^x(Z) \rar & H_0^x(Y) \rar &H_0^x(X) \rar & 0
    \end{tikzcd}\]
\item A natural transformation  $f\colon Y\to X$ is an epimorphism in $\Fun(\D,\C)$ if, and only if, for every $x$ in $\D$,
$H_0^x(f) \colon H_0^x(Y)\to H_0^x(X)$ is an epimorphism in $\C$ .
\item $X\colon\D\to\C$ is the zero functor if, and only if, $H_0^x(X)=0$ for every $x$ in $\D$.
\item For every $x$ in $\D$, the functor $H_0^x\colon\Fun(\D,\C)\to \C$ is a left adjoint.
\end{enumerate}
\end{proposition}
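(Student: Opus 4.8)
The plan is to prove the five claims in a dependency-friendly order: \textit{1} and \textit{2} are foundational, \textit{4} will feed into \textit{3}, and \textit{5} is an explicit adjunction. For \textit{1}, recall that in a functor category over a small index kernels and cokernels are computed objectwise, so every evaluation functor $\mathrm{ev}_d\colon\Fun(\D,\C)\to\C$ is exact. Since $\mathcal{K}^x$ is assembled objectwise in $\Fun(0\le 1,\C)$ from $Y\mapsto Y(x)$ and $Y\mapsto\bigoplus_{y\in\mathcal{P}(x)}Y(y)$ (a finite direct sum of evaluations, hence exact), and exactness in $\Fun(0\le 1,\C)$ is likewise detected objectwise, $\mathcal{K}^x$ is exact. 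For \textit{2}, I would apply the exact functor $\mathcal{K}^x$ to $0\to Z\to Y\to X\to 0$ to get a short exact sequence in $\Fun(0\le 1,\C)$; unpacked, this is a commutative diagram in $\C$ whose two rows (the degree-$0$ and degree-$1$ parts) are short exact and whose vertical maps are the transition morphisms of $\mathcal{K}^x$, with kernels $H_1^x$ and cokernels $H_0^x$. The six-term sequence is then precisely the snake lemma applied to this diagram; in particular its right-hand tail shows that each $H_0^x$ is right exact.

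Next I would settle \textit{4}. The forward implication is immediate, and for the converse I would induct over the finite poset $\D$: if $x$ is minimal then $\mathcal{P}(x)=\varnothing$ and $H_0^x(X)\cong X(x)$, while if $X(y)=0$ for all $y<x$ then $\bigoplus_{y\in\mathcal{P}(x)}X(y)=0$ and again $H_0^x(X)\cong X(x)$; either way $X(x)=0$. Statement \textit{3} then follows formally. Its forward direction is just right-exactness of $H_0^x$ (complete the epimorphism $f$ to a short exact sequence and apply \textit{2}). For the converse, set $C=\coker(f)$ in $\Fun(\D,\C)$; right-exactness gives $H_0^x(C)\cong\coker(H_0^x(f))$, which vanishes for every $x$ by hypothesis, so $C=0$ by \textit{4} and $f$ is an epimorphism.

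For \textit{5} I would produce the right adjoint by hand. Maps out of a cokernel give a natural identification $\mathrm{mor}_{\C}(H_0^x(X),A)\cong\{h\colon X(x)\to A\mid h\circ X(y\le x)=0\ \text{for all } y\in\mathcal{P}(x)\}$, and in a finite poset this parent condition is equivalent to requiring $h\circ X(z\le x)=0$ for all $z<x$. I claim the right-hand side equals $\Nat(X,G^x_A)$, where $G^x_A\colon\D\to\C$ is the skyscraper functor taking the value $A$ at $x$ and $0$ elsewhere (all transition maps zero): any natural transformation $X\to G^x_A$ must vanish outside degree $x$, and naturality along the relations $z<x$ imposes exactly $h\circ X(z\le x)=0$ on its degree-$x$ component $h$, with no further constraint coming from degrees above $x$. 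Hence $H_0^x$ is left adjoint to $A\mapsto G^x_A$.

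I expect the main obstacle to be the converse of the epimorphism criterion \textit{3}, which rests on the poset induction in \textit{4}: one must keep straight that $\mathcal{P}(x)$ consists of immediate predecessors and that the inductive hypothesis at $x$ really controls all of $\bigoplus_{y\in\mathcal{P}(x)}X(y)$. The only genuinely creative step is guessing the right adjoint in \textit{5}; once the skyscraper functor is written down, the verification, including its naturality in both variables, is routine. As a consistency check I would confirm the adjunction against the already-computed values $H_0^x(A(z,-))$, which are $A$ for $z=x$ and $0$ otherwise.
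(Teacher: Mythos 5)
Your proposal is correct, and for statements \textit{1}, \textit{2}, and \textit{5} it coincides with the paper's proof: exactness of $\mathcal{K}^x$ from objectwise exactness of (finite direct sums of) evaluation functors, the six-term sequence from the Snake Lemma applied to $\mathcal{K}^x$ of the short exact sequence, and the skyscraper functor (the paper's $\mathcal{U}_x$, your $G^x_A$) as the right adjoint --- the paper merely exhibits this adjoint, while you additionally carry out the verification, correctly noting that the parent condition $h\circ X(y\le x)=0$ for $y\in\mathcal{P}(x)$ is equivalent to the condition for all $z<x$ because every $z<x$ sits below some maximal, hence covering, element of $\{w\mid z\le w<x\}$. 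The genuine difference is in statements \textit{3} and \textit{4}, where you reverse the paper's logical order. The paper proves \textit{3} directly: assuming $H_0^x(f)$ is an epimorphism for all $x$ but $f$ is not, it picks a \emph{minimal} $z$ at which $f(z)$ fails to be an epimorphism and derives a contradiction from the ladder with exact rows $\bigoplus_{y\in\mathcal{P}(z)}Y(y)\to Y(z)\to H_0^z(Y)\to 0$; statement \textit{4} is then the special case $0\to X$. You instead prove \textit{4} first, by well-founded induction on the finite poset (at a minimal $x$, or once $X$ vanishes strictly below $x$, one has $H_0^x(X)\cong X(x)$), and then deduce \textit{3} formally: right exactness of $H_0^x$, extracted from \textit{2}, identifies $H_0^x(\coker f)$ with $\coker(H_0^x(f))$, which vanishes by hypothesis, so $\coker f=0$ by \textit{4}. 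Both routes rest on the same combinatorial input --- finiteness of $\D$ and the existence of minimal elements --- but yours trades the paper's diagram chase for a reduction of the morphism-level claim to the object-level one; the only point to make explicit is that passing from half-exactness on short exact sequences (statement \textit{2}) to $H_0^x(\coker f)\cong\coker(H_0^x(f))$ for an arbitrary $f$ requires the standard factorization through the image, which is routine.
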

\begin{proof}
\noindent \textit{1}:\quad It is a consequence of the direct sum being exact.
\medskip

\noindent \textit{2}:\quad It follows from the exactness (statement \textit{1}) and the standard Snake Lemma.
\medskip

\noindent \textit{3}:\quad  If $f$ is an epimorphism, then, for every $x$ in $\D$, so is ${\mathcal K}^x(f)$ and consequently so is $H_0^x(f)$. Assume $H_0^x(f)$ is an epimorphism for every $x$ in $\D$.  By contradiction suppose   $f$ is not an epimorphism. Since $\D$ is finite, we can then choose 
a minimal element $z$ in $\D$ for which $f(z)\colon Y(z)\to X(z)$ is not an epimorphism. Consider the following commutative diagram with exact rows: 
\[
\begin{tikzcd}
\bigoplus_{y\in {\mathcal P}(z)} Y(y)\arrow[r]\arrow[d] & Y(z)\arrow[r] \arrow[d, "f(z)"] & H_0^z(Y)\arrow[d, "H_0^z(f)"]\ar{r} & 0\\
\bigoplus_{y\in {\mathcal P}(z)} X(y)\arrow[r] & X(z)\arrow[r]  & H_0^z(X)\ar{r} & 0
\end{tikzcd}
\]
The hypothesis implies that the right vertical morphism is an epimorphism. 
The left vertical morphism is also an epimorphism, by minimality of $z$ among the elements of $\D$ for which $f(z)$ is not an epimorphism.
Thus, the central vertical morphism is also an epimorphism, which is a contradiction. 
\medskip

\noindent \textit{4}:\quad  It is a consequence of statement \textit{3}
by considering the morphism $0\to X$.
\medskip

\noindent \textit{5}:\quad  The right adjoint is given by the functor 
${\mathcal U}_x\colon \C\to\Fun(\D,\C)$ which
\begin{itemize}
    \item to an object $A$ in $\C$ assigns  the unique functor such that:
\[
{\mathcal U}_x(A)(z)=\begin{cases}
    A &\text{ if } z=x\\
    0 &\text{ if } z\not=x
\end{cases}
\]
\item to  $f\colon A\to B$ in $\C$  assigns  the unique natural transformation ${\mathcal U}_x(f)\colon {\mathcal U}_x(A)\to {\mathcal U}_x(B)$ for which $({\mathcal U}_x(f))(x)=f$.\qedhere
\end{itemize}
\end{proof}

\cref{sfsfdhfgs} can be used to describe how to construct (minimal) projective covers in $\Fun(\D,\C)$ when $\D$ is finite.

Let $X\colon\D\to\C$ be a functor. Assume that, for every $z$
in $\D$, we are given a (minimal) projective cover $c_z\colon P_z\to H_0^z(X)$ in $\C$.
For every $z$ in $\D$, let us choose a morphism $s_z\colon P_z\to X(z)$ whose composition with the quotient morphism $X(z)\to H_0^z(X)$ is the cover $c_z$. Such a lift exists since $P_z$ is projective.
This morphism determines a natural transformation $P_z(z,-)\to X$.  
Let $s\colon \bigoplus_{z\in\D}P_z(z,-)\to X$ be their sum. 
\begin{lemma}\label{daDGSFJKJ}
    The natural  transformation $s\colon \bigoplus_{z\in\D}P_z(z,-)\to X$ is a  (minimal) projective cover of $X$ in $\Fun(\D,\C)$.
 \end{lemma}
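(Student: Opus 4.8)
The plan is to verify separately the three assertions packaged in the lemma: that $F\coloneqq\bigoplus_{z\in\D}P_z(z,-)$ is projective, that $s$ is an epimorphism, and that $s$ is minimal whenever every $c_z$ is. The main tool throughout will be \cref{sfsfdhfgs}, together with the computation preceding \cref{sdgdfhfsj} that $H_0^x(A(z,-))$ equals $A$ if $z=x$ and $0$ otherwise. The organizing principle is that $H_0^x$ is a faithful enough collection of ``detectors'' to reduce statements about $\Fun(\D,\C)$ to statements in $\C$.

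For projectivity I would first record the adjunction isomorphism $\Nat(A(x,-),Y)\cong\text{mor}_{\C}(A,Y(x))$, which expresses that $A(x,-)$ is the left Kan extension of $A$ along $\{x\}\subset\D$. Since an epimorphism in $\Fun(\D,\C)$ is a pointwise epimorphism, this adjunction shows that $P_z(z,-)$ is projective whenever $P_z$ is; as $\D$ is finite, the direct sum $F$ is again projective. For surjectivity I would invoke \cref{sfsfdhfgs}.\textit{3} to reduce to checking that $H_0^x(s)$ is an epimorphism for each $x$. Applying $H_0^x$, which preserves the finite direct sum as a left adjoint (\cref{sfsfdhfgs}.\textit{5}), kills the contribution of every summand $P_z(z,-)\to X$ with $z\neq x$ and identifies $H_0^x(F)$ with $P_x$; tracing the construction of $s_x$ (whose composite with the quotient $X(x)\to H_0^x(X)$ is $c_x$) then yields $H_0^x(s)=c_x$, which is an epimorphism because $c_x$ is a cover. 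Hence $s$ is a projective cover.

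For minimality, suppose $\varphi\colon F\to F$ satisfies $s\varphi=s$. Applying $H_0^x$ gives $c_x\,H_0^x(\varphi)=c_x$, and since $H_0^x(F)\cong P_x$ and $c_x$ is a minimal cover, minimality forces $H_0^x(\varphi)$ to be an isomorphism for every $x$. It remains to upgrade this pointwise information to the global statement that $\varphi$ is an isomorphism, which I expect to be the main obstacle. The key observation is that, by the same adjunction, $\Nat(P_z(z,-),P_w(w,-))\cong\text{mor}_{\C}(P_z,P_w(w,z))$ equals $\text{mor}_{\C}(P_z,P_w)$ when $w\le z$ and $0$ otherwise, so that after fixing a linear extension of $\D$ the endomorphism $\varphi$ is represented by a triangular matrix of morphisms whose diagonal entry at $z$ is precisely $H_0^z(\varphi)$.

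Writing $\varphi=\delta+\nu$ with $\delta$ the diagonal (hence isomorphism) part and $\nu$ the strictly triangular part, I would then argue that $\nu$ is nilpotent: its $k$-fold composite is supported on strictly increasing chains $v_0<\dots<v_k$ in $\D$, and finiteness of $\D$ bounds the length of such chains, so $\nu^{N}=0$ for $N$ exceeding the length of the longest chain. Consequently $\varphi=\delta(\id+\delta^{-1}\nu)$ with $\delta^{-1}\nu$ again strictly triangular and hence nilpotent, so $\id+\delta^{-1}\nu$ is invertible and therefore so is $\varphi$. This completes the minimality argument and, with it, the lemma.
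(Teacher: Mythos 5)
Your proof is correct, and its first two parts (projectivity of $\bigoplus_{z\in\D}P_z(z,-)$ via the adjunction, and surjectivity of $s$ via the detection criterion \cref{sfsfdhfgs}.\textit{3} together with $H_0^x(s)=c_x$) coincide with the paper's. Where you genuinely diverge is the final step of the minimality argument. Both proofs begin identically: from $s\varphi=s$ one gets $c_x\,H_0^x(\varphi)=c_x$, so minimality of $c_x$ forces every $H_0^x(\varphi)$ to be an isomorphism. The paper then stays purely formal: since each $H_0^x(\varphi)$ is in particular an epimorphism, \cref{sfsfdhfgs}.\textit{3} makes $\varphi$ an epimorphism; projectivity of the cover yields a section $\psi$ with $\varphi\psi=\id$, which is a monomorphism satisfying $s\psi=s\varphi\psi=s$, so the same argument applied to $\psi$ shows it is also an epimorphism, hence an isomorphism (mono plus epi in an abelian category), and therefore $\varphi=\psi^{-1}$ is one too. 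You instead analyze the endomorphism ring of $\bigoplus_{z\in\D}P_z(z,-)$ directly: the adjunction computation $\Nat(P_z(z,-),P_w(w,-))\cong\mathrm{mor}_{\C}(P_z,P_w(w,z))$, which vanishes unless $w\le z$, exhibits $\varphi$ as a triangular matrix (with respect to a linear extension of $\D$) whose diagonal is $(H_0^z(\varphi))_{z\in\D}$, and whose strictly triangular part $\nu$ is nilpotent because strictly increasing chains in the finite poset $\D$ have bounded length; hence $\varphi=\delta(\id+\delta^{-1}\nu)$ is invertible. Both routes are sound and both ultimately rest on finiteness of $\D$. Yours is more explicit—it actually produces the inverse as a finite geometric series—and it is in the spirit of the radical/nilpotence arguments the paper itself deploys later (\cref{lem:indecomposable} in \cref{sec_gluing}); the paper's is shorter and requires no analysis of the endomorphism ring, at the price of invoking projectivity a second time and the mono-plus-epi characterization of isomorphisms.
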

 \begin{proof}
Since left Kan extensions and direct sums preserve the property of being projective,  the functor $\bigoplus_{z\in\D} P_z(z,-)$ is projective in $\Fun(\D,\C)$. 
Note that $H_0^x(\bigoplus_{z\in\D} P_z(z,-))= P_x$  
and $H_0^x(s)$ coincides with $s_x$ for every $x$ in $\D$. 
Thus according to~\cref{sfsfdhfgs}.\textit{3}, $s$ is an epimorphism. 
The natural transformation $s$ is therefore a projective cover of $X$. 

To show the minimality of $s$, assume that $s_x$ is a lift of a minimal projective cover $c_x\colon P_x\to H_0^x(X)$ in $\C$. 
Let $\varphi\colon \bigoplus_{z\in\D} P_z(z,-)\to \bigoplus_{z\in\D} P_z(z,-)$ be a natural transformation for which $s=s\varphi$. 
Since, for every $x$, the morphism $H_0^x(s)=c_x$ is  minimal  in $\C$, the morphism $H_0^x(\varphi)$
is an isomorphism. 
Consequently $\varphi$ is an epimorphism (see~\cref{sfsfdhfgs}.\textit{3}). 
Let $\psi\colon \bigoplus_{z\in\D} P_z(z,-)\to \bigoplus_{z\in\D} P_z(z,-)$ be such that $\varphi\psi$ is the identity, which exists since $\bigoplus_{z\in\D} P_z(z,-)$ is projective.  
The natural transformation $\psi$ is therefore a monomorphism. 
Moreover, $s\psi=s\varphi\psi= s$. 
By the previous argument, $\psi$ is also an epimorphism, and hence an isomorphism.
\end{proof}

\cref{th_tame_ab}.\textit{2} and .\textit{3} for finite posets are direct consequences of~\cref{daDGSFJKJ}. 
This lemma can also be used to characterize
projective objects in $\Fun(\D,\C)$ for finite $\D$:

\begin{proposition}\label{afashdsfjg}
Let $\C$ be an abelian category and  $\D$ be a finite poset. 
\begin{enumerate} 
\item  A functor $X\colon \D\to\C$  is  projective in $\Fun(\D,\C)$
if, and only if,
$H_0^z(X)$ is projective in $\C$,  for every $z$ in $\D$, and 
$X$ is isomorphic to  
 $\bigoplus_{z\in\D} H_0^z(X)(z,-)$.
\item If, in addition, $\D$ is a poset of dimension at most $1$, then  a functor 
$X\colon \D\to\C$ is projective if, and only if,  $H_1^x(X)=0$ and $H_0^x(X)$
is  projective  in $\C$ for every  $x$ in $\D$.
\end{enumerate}
\end{proposition}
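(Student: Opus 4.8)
The plan is to organize both equivalences around the canonical comparison morphism $s\colon\bigoplus_{z\in\D}H_0^z(X)(z,-)\to X$ supplied by \cref{daDGSFJKJ}, reducing every claim to the behaviour of $X$ under the functors $H_0^z$ and $H_1^x$ recorded in \cref{sfsfdhfgs}. The easy implication of statement \textit{1} is immediate: if each $H_0^z(X)$ is projective and $X\cong\bigoplus_{z\in\D}H_0^z(X)(z,-)$, then $X$ is projective, because each homogeneous free functor $H_0^z(X)(z,-)$ is a left Kan extension of a projective object (hence projective, as left Kan extensions preserve projectives) and finite direct sums of projectives are projective.

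For the converse in statement \textit{1}, suppose $X$ is projective. First I would check that each $H_0^z(X)$ is projective in $\C$: by \cref{sfsfdhfgs}.\textit{5} the functor $H_0^z$ is a left adjoint, and its right adjoint $\mathcal U_z$ sends epimorphisms to objectwise epimorphisms; since a left adjoint whose right adjoint preserves epimorphisms preserves projectives, $H_0^z(X)$ is projective. Crucially, this avoids assuming that $\C$ has enough projectives. The epimorphism $X(z)\to H_0^z(X)$ then splits through a section $s_z\colon H_0^z(X)\to X(z)$, and feeding the data $P_z=H_0^z(X)$, $c_z=\id$ into \cref{daDGSFJKJ} yields a \emph{minimal} projective cover $s$ as above. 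To promote $s$ from an epimorphism to an isomorphism, I would use that $X$ projective makes $s$ split, say $st=\id$; then $ts$ is an endomorphism of the source with $s(ts)=s$, so minimality forces $ts$ to be an isomorphism. Hence $s$ is a split monomorphism, and, being also an epimorphism, it is an isomorphism. This establishes $X\cong\bigoplus_{z\in\D}H_0^z(X)(z,-)$ and finishes statement \textit{1}.

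For statement \textit{2}, assume in addition that $\D$ has dimension at most $1$. Since $H_1^x=\ker\circ\,\mathcal K^x$ is additive and $\D$ is finite, $H_1^x$ commutes with the finite direct sum defining the source of $s$, and \cref{sdgdfhfsj} gives $H_1^x\bigl(\bigoplus_{z}H_0^z(X)(z,-)\bigr)=0$. The ``only if'' direction is then immediate from statement \textit{1}: a projective $X$ is such a direct sum, so $H_1^x(X)=0$ while $H_0^x(X)$ is projective. For the ``if'' direction, assume $H_1^x(X)=0$ and $H_0^x(X)$ projective for all $x$; as before \cref{daDGSFJKJ} (with $c_z=\id$) produces a projective cover $s$, now sitting in a short exact sequence $0\to K\to\bigoplus_z H_0^z(X)(z,-)\to X\to0$. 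Applying the long exact sequence of \cref{sfsfdhfgs}.\textit{2} at each $x$, and using that $H_1^x$ of the source vanishes, that $H_1^x(X)=0$, and that $H_0^x(s)=c_x=\id$ is a monomorphism, I would read off $H_0^x(K)=0$ for every $x$; \cref{sfsfdhfgs}.\textit{4} then forces $K=0$, so $s$ is an isomorphism and $X$ is projective by statement \textit{1}.

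The step I expect to be the crux is the upgrade from epimorphism to isomorphism in statement \textit{1}: constructing the cover is routine, but identifying a projective $X$ \emph{canonically} with $\bigoplus_z H_0^z(X)(z,-)$ is exactly where the minimality built into \cref{daDGSFJKJ} is indispensable. The other delicate point is the verification that $\mathcal U_z$ preserves epimorphisms, which is what lets $H_0^z$ preserve projectives and thereby keeps the whole argument free of any enough-projectives hypothesis.
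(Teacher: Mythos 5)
Your proof is correct and follows essentially the same route as the paper: projectivity of $H_0^z(X)$ via the adjunction of \cref{sfsfdhfgs}.\textit{5}, the minimal cover of \cref{daDGSFJKJ} (with $c_z=\id$) upgraded to an isomorphism by minimality, and for statement \textit{2} the exact sequence of \cref{sfsfdhfgs}.\textit{2} together with \cref{sfsfdhfgs}.\textit{4} and \cref{sdgdfhfsj}. The only difference is that you spell out details the paper leaves implicit — the split-epi/minimality argument, the check that $\mathcal{U}_z$ preserves epimorphisms, and the additivity of $H_1^x$ — which is a faithful elaboration rather than a different approach.
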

\begin{proof}
\noindent \textit{1}:\quad 
Since left Kan extensions and direct sums preserve the property of being projective, if
$H_0^z(X)$ is projective in $\C$ for every $z$ in $\D$, then  $\bigoplus_{z\in\D} H_0^z(X)(z,-)$ is projective in
$\Fun(\D,\C)$. 
This shows the sufficient condition. 

Assume $X$ is projective. 
Since the functor $H_0^x$ is a left adjoint (see~\cref{sfsfdhfgs}.\textit{5}), the object $H_0^x(X)$ is projective in $\C$ for every $x$ in $\D$. Thus the minimal projective cover given by \cref{daDGSFJKJ} gives the desired isomorphism between $X$ \medskip and 
$\bigoplus_{x\in\D} H_0^x(X)(x,-)$.

\noindent \textit{2}:\quad  The necessary condition follows from statement~\textit{1}
and~\cref{sdgdfhfsj}. 
To show the sufficient condition, consider the minimal 
projective cover $s\colon \bigoplus_{x\in\D}H_0^x(X)(x,-)\to X$
given by \cref{daDGSFJKJ}. 
We need to show that its kernel $Z\coloneqq\ker(s)$ is trivial. 
The assumption $H_1^x(X)=0$ and the exact sequence given in~\cref{sfsfdhfgs}.\textit{2}  gives $H_0^x(Z)=0$. 
Thus by \cref{sfsfdhfgs}.\textit{4} we can conclude that $Z=0$.
\end{proof}

We can now characterize projective objects in $\tame(\pos,\C)$.

\begin{corollary}\label{asfddhdhfj}
Let $\C$ be an abelian category and $\pos$ a poset. 
Assume that every functor in $\tame(\pos, \C)$ can be discretized by a poset functor admitting a transfer. 
Then $X$ in  $\tame(\pos,\C)$ is projective if, and only if, there is a finite $\D\subset\pos$  and a sequence $\{P_z\}_{z\in\D}$ of projective objects in $\C$ for which
$X$ is isomorphic to $\bigoplus_{z\in\D} P_z(z,-)$.
\end{corollary}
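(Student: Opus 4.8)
The statement is an equivalence, and I would prove the two directions separately, the backward one being routine and the forward one requiring a transfer of projectivity between the two functor categories.

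For the \emph{if} direction, suppose $X\cong\bigoplus_{z\in\D}P_z(z,-)$ with $\D\subset\pos$ finite and each $P_z$ projective in $\C$. Each homogeneous free functor $P_z(z,-)$ is the left Kan extension of $P_z$ (viewed on the one-point poset $\{z\}$) along $\{z\}\subset\pos$. Since $P_z$ is projective in $\C=\Fun(\{z\},\C)$ and left Kan extensions preserve projectives (being left adjoint to the exact precomposition functor), $P_z(z,-)$ is projective in $\Fun(\pos,\C)$. As it is tame and the inclusion $\tame(\pos,\C)\subset\Fun(\pos,\C)$ preserves epimorphisms (kernels and cokernels coincide, by \cref{th_tame_ab}.\textit{1}), it is projective in $\tame(\pos,\C)$, and a finite direct sum of projectives is projective.

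For the \emph{only if} direction, let $X$ be projective in $\tame(\pos,\C)$. Using \cref{dsgdfh}.\textit{3} together with \cref{prop_full_lke}.\textit{1} (the same reduction as in the proof of \cref{prop_limtame}), I may assume $X$ is discretized by a finite full subposet inclusion $\alpha\colon\D\subset\pos$ admitting a transfer, so that the unit $\alpha^k(X\alpha)\to X$ is an isomorphism by \cref{prop_full_lke}.\textit{4}. The plan is to show that the restriction $X\alpha$ is projective in $\Fun(\D,\C)$, invoke the structural description of finite-poset projectives in \cref{afashdsfjg}.\textit{1}, and transport the resulting decomposition back up along $\alpha^k$.

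The crux is transferring projectivity of $X$ down to $X\alpha$, which I would verify directly via the lifting property. Let $q\colon W\to V$ be an epimorphism in $\Fun(\D,\C)$ and $g\colon X\alpha\to V$ arbitrary. Applying $\alpha^k$, which is exact and hence preserves epimorphisms (\cref{dsgdfh}) and lands in tame functors, produces an epimorphism $\alpha^k q\colon\alpha^k W\to\alpha^k V$ in $\tame(\pos,\C)$ and a morphism $\alpha^k g\colon X\cong\alpha^k(X\alpha)\to\alpha^k V$. Projectivity of $X$ yields a lift $h\colon X\to\alpha^k W$ with $(\alpha^k q)h=\alpha^k g$. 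Restricting along $\alpha$ and using the unit isomorphisms $W\cong(\alpha^k W)\alpha$, $V\cong(\alpha^k V)\alpha$, and $X\alpha\cong(\alpha^k(X\alpha))\alpha$ of \cref{prop_full_lke}.\textit{3}, whose naturality identifies $(\alpha^k q)\alpha$ with $q$ and $(\alpha^k g)\alpha$ with $g$, the morphism $h\alpha$ furnishes the required lift $X\alpha\to W$ of $g$ along $q$. Thus $X\alpha$ is projective in $\Fun(\D,\C)$. This lifting argument is the main obstacle; everything else is bookkeeping with the adjunction unit.

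To conclude, \cref{afashdsfjg}.\textit{1} now gives that each $P_z\coloneqq H_0^z(X\alpha)$ is projective in $\C$ and that $X\alpha\cong\bigoplus_{z\in\D}P_z(z,-)$ in $\Fun(\D,\C)$. Applying $\alpha^k$, which commutes with direct sums and satisfies $\alpha^k(P_z(z,-))\cong P_z(z,-)$ on $\pos$ (because left Kan extensions compose along $\{z\}\subset\D\subset\pos$), I obtain $X\cong\alpha^k(X\alpha)\cong\bigoplus_{z\in\D}P_z(z,-)$ with $\D\subset\pos$ finite and each $P_z$ projective, as desired.
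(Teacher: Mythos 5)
Your proof is correct and follows the route the paper intends: the corollary is stated there without a printed proof, as a direct consequence of \cref{afashdsfjg}.\textit{1} combined with the reduction to a finite full subposet inclusion admitting a transfer (\cref{dsgdfh}.\textit{3} and \cref{prop_full_lke}) and the fact that left Kan extensions preserve projectives and direct sums. The one step the paper leaves implicit --- that projectivity of $X$ in $\tame(\pos,\C)$ descends to $X\alpha$ in $\Fun(\D,\C)$ --- is exactly what your lifting argument supplies, and it is correct; it also has the virtue of requiring no enough-projectives or minimal-cover hypothesis on $\C$, which an alternative argument via splitting a minimal projective cover would need.
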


We finish this section with:

\begin{proof}[Proof of \Cref{th_tame_ab}.\textit{4}  for finite posets]
Assume $\pos$ is finite. 
If $\text{\rm proj-dim}(\C)=\infty$, the statement is true.
Assume then $\text{\rm proj-dim}(\C)$ is finite. 
Consider a minimal projective cover of a functor $X\colon\pos\to\C$ discussed in \cref{daDGSFJKJ}, $s\colon \bigoplus_{z\in\pos}P_z(z,-)\to X$. 
Note that, for every $x$ in $\pos$, we have $H_1^x(\ker(s))=0$ as a consequence of~\cref{sdgdfhfsj} and the exact sequence given in~\cref{sfsfdhfgs}.\textit{2}. 
The statement would be then proven if we showed that every functor $Y\colon \pos\to\C$, such that $H_1^x(Y)=0$ for every $x$ in $\pos$, has a projective resolution of length at most $m_Y\coloneqq\max\{\text{\rm proj-dim}(H_0^x(Y))\, |\, x\in\pos\}$. 
We do that by induction on $m_Y$. 

If $m_Y=0$, then $H_0^x(Y)$ is projective for every $x$ and hence, according to~\cref{afashdsfjg}.\textit{2}, $Y$ is projective. 
The claim is therefore true in this case. 
Let $m_Y>0$ and assume that the claim is true for all functors $X\colon\pos\to\C$ such that $m_X<m_Y$ and $H_1^x(X)=0$ for all $x$ in $\pos$. 
Consider a projective cover $s\colon \bigoplus_{z\in\pos}P_z(z,-)\to Y$ discussed in~\cref{daDGSFJKJ}. 
For every $x$ in $\pos$, there is an exact sequence (see~\cref{sfsfdhfgs}.\textit{2}):
\[
0\to H_0^x(\ker(s))\to
H_0^x\left(\bigoplus_{z\in\pos}P_z(z,-)\right)=P_x\to 
H_0^x(Y)\to 0.
\]
This means that $m_{\ker(s)}< m_Y$. 
Since also $H_1^x(\ker(s))=0$ for every $x$, by the inductive assumption the functor $\ker(s)$ has a projective resolution of length at most $m_Y-1$. 
By concatenating this resolution with $s$ we obtained a resolution of $Y$ of length at most $m_Y$. 
\end{proof}

\section{Model structure on tame functors}
\label{subsec:tame_mod}
In this section, we explain how to extend a model structure on $\C$ to a model structure on $\tame(\pos,\C)$ provided that every functor in $\tame(\pos,\C)$ can be discretized along a poset functor admitting a transfer (cf.\  \Cref{th_tame_mod}). 
This is a generalization of the known result of \cite{dwyerspalinski} for $\Fun(\D, \C)$ with $\D$ finite, which we extend in~\cref{def_finite_cofibration} to include minimality.
It is also a new result with respect to \cite{realisations_posets} and \cite{bcw}, where the indexing poset $\pos$ is required to be an upper semilattice and $[0,\infty)$, respectively.

\begin{theorem}\label{def_finite_cofibration}
Let $\D$ be a finite poset and $\C$ a model category. 
Then the following choices of weak equivalences, fibrations, and cofibrations 
in  $\Fun\left(\D,\C\right)$ form a model structure.
A morphism $\varphi\colon X\to Y$ in $\Fun\left(\D,\C\right)$ 
is a
\begin{itemize}
    \item \define{weak equivalence}/\define{fibration} if,
    for every $x$ in $\D$, the morphism
    $\varphi(x)\colon X(x)\to Y(x)$ is a weak equivalence/fibration in $\C$;
    \item \define{cofibration}  if, for every $x\in \D$, the mediating morphism
\begin{equation}\label{mediating_morph_cofibration}
\widehat{\varphi}(x)\colon P=\colim\left(\colim_{\D<x} Y \leftarrow \colim_{\D < x} X \rightarrow X(x)\right) \to Y(x)
\end{equation}
in the pushout diagram 
\begin{equation*}\label{D_finite_cofib}
\begin{tikzcd}[column sep=2em, row sep=3ex, ampersand replacement=\&]
		\colim_{\D < x} X \ar[r] \ar[d, "\colim_{\D < x} \varphi"'] \arrow[dr, phantom, "\scalebox{1.5}{\color{black}$\ulcorner$}", pos=.9]
		\& X(x) \ar[d] \ar[ddr, bend left=35]
		\& [-1em]
		\\
		\colim_{\D < x} Y \ar[r] \ar[drr, bend right=16]
		\& P\ar[dr, "\widehat{\varphi}(x)", pos=0.3]
		\&
		\\
		[-1ex]
		\&
		\& Y(x)
\end{tikzcd}
\end{equation*}
is a cofibration in $\C$.
\end{itemize}

Furthermore, if $\C$ satisfies the minimal cofibrant factorization axiom, then so does $\Fun\left(\D,\C\right)$.
\end{theorem}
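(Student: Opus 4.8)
The plan is to prove the ``furthermore'' clause by importing the minimal cofibrant factorization axiom of $\C$ one element of $\D$ at a time, in direct analogy with the latching construction of cofibrations used in \cref{def_finite_cofibration} and with the degreewise constructions of \cref{subsec:chain}. First I would construct a minimal cofibrant factorization of a given $\varphi\colon X\to Y$ by induction along a linear extension $x_1,\dots,x_n$ of the finite poset $\D$ (so that $x_i<x_j$ forces $i<j$). Suppose that $W$, together with $c\colon X\to W$ and $\pi\colon W\to Y$, has already been built on the strictly lower set $\D<x$, with $\pi c=\varphi$ there. Writing $L^W=\colim_{\D<x}W$, $L^X=\colim_{\D<x}X$, and $L^Y=\colim_{\D<x}Y$, I would form the pushout $P_x=\colim\!\left(L^W\leftarrow L^X\to X(x)\right)$, which is exactly the object $P$ appearing in \eqref{mediating_morph_cofibration}, together with the mediating morphism $g_x\colon P_x\to Y(x)$ induced by $\varphi(x)\colon X(x)\to Y(x)$ and by the composite $L^W\xrightarrow{\colim\pi}L^Y\to Y(x)$; these agree on $L^X$ precisely because $\pi c=\varphi$ below $x$. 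Applying the minimal cofibrant factorization axiom of $\C$ to $g_x$ gives $P_x\xrightarrow{j_x}W(x)\xrightarrow{q_x}Y(x)$ with $j_x$ a cofibration, $q_x$ a fibration and weak equivalence, and the factorization minimal. I would then set $\pi(x)=q_x$, define the structure maps $W(y\le x)$ and the component $c(x)$ as the evident composites through $j_x$, and verify functoriality and naturality from the universal property of $P_x$. By construction the mediating morphism $\widehat{c}(x)$ of \eqref{mediating_morph_cofibration} is precisely $j_x$, so $c$ is a cofibration, whereas $\pi$ is degreewise a fibration and a weak equivalence; hence $\pi c$ is a cofibrant factorization of $\varphi$.

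The remaining, and harder, task is minimality. Let $\Psi\colon W\to W$ satisfy $\Psi c=c$ and $\pi\Psi=\pi$; since isomorphisms in $\Fun(\D,\C)$ are detected objectwise, it suffices to prove that $\Psi(x)$ is an isomorphism for every $x$, which I would do by induction along the same linear extension. Assuming $\Psi(y)$ is an isomorphism for all $y<x$, the morphism $\colim_{\D<x}\Psi$ is an isomorphism of $L^W$; being the identity on the $X(x)$-leg (because $\Psi c=c$) and compatible with the pushout maps, it induces an automorphism $\bar\Psi_x$ of $P_x$ satisfying $g_x\bar\Psi_x=g_x$. Naturality of $\Psi$ together with $\pi\Psi=\pi$ then yields the two identities $\Psi(x)\,j_x=j_x\,\bar\Psi_x$ and $q_x\,\Psi(x)=q_x$.

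To conclude, I would first record the model-categorical analogue of \cref{min_cover_iso}: precomposing the cofibration of a minimal cofibrant factorization with a source automorphism that fixes the factored map again yields a minimal cofibrant factorization, so $\left(j_x\bar\Psi_x,\,q_x\right)$ is a second minimal cofibrant factorization of $g_x$. The identities above exhibit $\Psi(x)$ as a morphism of factorizations from $(j_x,q_x)$ to $(j_x\bar\Psi_x,q_x)$. Using (MC4) to lift $j_x$ against the trivial fibration $q_x$ in the commuting square built from these two factorizations of $g_x$, I obtain a morphism in the opposite direction; the two composites are then strict endomorphisms of the respective minimal factorizations, hence isomorphisms by minimality, so $\Psi(x)$ is an isomorphism. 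This closes the induction and shows that $\Psi$ is an isomorphism, so $\pi c$ is minimal. The main obstacle is exactly this twist: $\Psi(x)$ does not commute strictly with $j_x$ but only intertwines it with the automorphism $\bar\Psi_x$ of the latching object $P_x$. The crux is therefore to recognize $\bar\Psi_x$ as an automorphism over $Y(x)$, to see that twisting a minimal factorization by such an automorphism preserves minimality, and to package the uniqueness of minimal cofibrant factorizations (via the lifting axiom) so that any morphism between two of them is forced to be an isomorphism; the existence half, by contrast, is a routine latching induction.
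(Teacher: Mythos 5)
Your proposal is correct and takes essentially the same route as the paper: the paper also runs the Dwyer--Spali\'nski inductive pushout construction, taking a minimal cofibrant factorization in $\C$ of the mediating morphism $P\to Y(x)$ at each stage (your $P_x\xrightarrow{j_x}W(x)\xrightarrow{q_x}Y(x)$), and then asserts minimality of the resulting factorization, deferring the verification to \cite[Prop.~5.14]{barbara_thesis}. The part you work out in detail --- the twist $\Psi(x)j_x=j_x\bar\Psi_x$ by an automorphism of the pushout, the model-categorical analogue of \cref{min_cover_iso}, and the (MC4) lifting argument forcing $\Psi(x)$ to be an isomorphism --- is exactly the content the paper leaves to that reference, and your treatment of it is sound.
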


\begin{proof}
For the proof of the model structure part see~\cite{dwyerspalinski}. 

To construct minimal cofibrant factorizations our strategy is the same as the one used in~\cite{dwyerspalinski} to construct cofibrant factorizations, except that at each step we take minimal cofibrant factorizations in $\C$. 
Explicitly, consider $\varphi\colon X\to Y$ in $\Fun\left(\D,\C\right)$.
Let $\mathcal{M}\subset \D$ be the set of minima of $\D$.
We claim that the minimal cofibrant factorization of $\varphi$ is constructed inductively by performing the following steps: 
\begin{itemize}
    \item[(i)] For all $m\in \mathcal{M}$, take the minimal cofibrant factorization of $\varphi(m)$;
    \item[(ii)] For all $x\in \D\setminus\mathcal{M}$, first, take the minimal cofibrant factorization $\colim_{\D<x}X\to W\to \colim_{\D<x}Y$ and then take the minimal cofibrant factorization $P\to C(x)\to Y(x)$ of the mediating morphism of the pushout $P=\colim(X(x)\leftarrow\colim_{\D<x}X\to W)$:
\[
\begin{tikzcd}[ampersand replacement=\&]
    \colim_{\D < x} X \arrow[r]\ar{d} \arrow[dr, phantom, "\scalebox{1.5}{\color{black}$\ulcorner$}", pos=.9] 
    \& W\arrow{rr}\ar{d} 
    \&  
    \&  \colim_{\D < x} Y\ar{d}
    \\
    X(x) \ar{r}\ar[bend right=25pt, "\varphi(x)"]{rrr} \& P\ar{r} \& C(x)\ar{r} \& Y(x)
\end{tikzcd}
\]
The wanted factorization consists, for every $x$, of the cofibration given by the composition $X(x)\to P \to C(x)$ and the fibration and weak equivalence $ C(x)\to Y(x)$.
\end{itemize}
We already know (see~\cite{dwyerspalinski}) that this construction gives a factorization into a cofibration and a fibration that is also a weak equivalence. 
The minimality of this construction follows by the universal property of the pushout (see~\cite[Prop. 5.14]{barbara_thesis} for more details). 
\end{proof}

Since we define fibrations, rather than cofibrations, objectwise, the above-defined model structure is usually referred to as a \define{projective model structure}.
\medskip

The goal of this section is to extend \cref{def_finite_cofibration} to: 

\begin{theorem}\label{th_tame_mod}
Let $\C$ be a model category. 
Assume that each functor in $\tame(\pos, \C)$ can be discretized along a poset functor admitting a transfer.
Then the following choices of morphisms in $\tame(\pos, \C)$ define a 
model category structure. 
A morphism  $\varphi\colon X\to Y$ in  $\tame\left(\pos,\C \right)$
is:
\begin{itemize}
    \item A \define{weak equivalence/fibration} if, for every $x$ in $\pos$,  the morphism $\varphi(x)\colon X(x)\to Y(x)$ is a weak equivalence/fibration  in $\C$;
    \item  A \define{cofibration} if  there is a finite full subposet $\alpha\colon \D\subset \pos$ discretizing both $X$ and $Y$ and for which  
    the restriction $\varphi^\alpha\colon X\alpha\to Y\alpha$ is a cofibration in $\Fun\left(\D,\C\right)$ as described in \cref{def_finite_cofibration}.
\end{itemize} 

Furthermore, if $\C$ satisfies the minimal cofibrant factorization axiom, then so does $\tame\left(\pos,\C\right)$.
\end{theorem}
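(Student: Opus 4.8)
The plan is to prove everything by reducing to the finite case of \cref{def_finite_cofibration} and transporting the resulting model structure along left Kan extension. The engine of the reduction is the following dictionary, valid for any finite full subposet inclusion $\alpha\colon \D\subset \pos$ admitting a transfer. By \cref{prop_full_lke}.\textit{3} and \cref{prop_full_lke}.\textit{4}, the functors $\alpha^k$ and $(-)^\alpha$ are mutually inverse equivalences between $\Fun(\D,\C)$ and the full subcategory of $\tame(\pos,\C)$ of functors discretized by $\alpha$. Crucially, since $\alpha$ admits a transfer, $\alpha^k$ coincides with precomposition by $\alpha^!$ (\cref{dsgdfh}.\textit{1}), so for $\psi$ in $\Fun(\D,\C)$ and $q$ in $\pos$ the morphism $(\alpha^k\psi)(q)$ equals $\psi(\alpha^!(q))$, which by \cref{sdgadfgh} is either some $\psi(d)$ with $d\in\D$ or the identity of $\varnothing$. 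Because identities are weak equivalences and fibrations in any model category, it follows that $\psi$ is an objectwise weak equivalence (resp. fibration) on $\D$ if, and only if, $\alpha^k\psi$ is a weak equivalence (resp. fibration) on $\pos$; and that $\alpha^k$ carries cofibrations of $\Fun(\D,\C)$ to cofibrations of $\tame(\pos,\C)$ (witnessed by $\D$ itself, using $(\alpha^k\psi)^\alpha\cong\psi$). Throughout I will use that any finite family of tame functors admits a common transfer-admitting discretization, available under our hypothesis as in the proof of \cref{cor_closedcolim}.

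With this dictionary, (MC1) is exactly \cref{cor_closedcolim}, and (MC2) is immediate because weak equivalences are detected objectwise on $\pos$ and the two-out-of-three property holds in $\C$. For (MC3) I treat weak equivalences and fibrations objectwise, where retract-closedness is inherited from $\C$. For a retract $\varphi$ of a cofibration $g$, I choose a discretization $\D_g$ witnessing that $g$ is a cofibration; since the counit of $\alpha^k\dashv(-)^\alpha$ is natural, ``being discretized by $\D_g$'' is stable under retracts (a retract of the isomorphism $\alpha^k((-)^{\alpha_g})\to \id$ is again an isomorphism), so the source and target of $\varphi$ are also discretized by $\D_g$. Restricting the retract diagram to $\D_g$ then exhibits $\varphi^{\alpha_g}$ as a retract of the cofibration $g^{\alpha_g}$ in $\Fun(\D_g,\C)$, hence a cofibration, so $\varphi$ is a cofibration witnessed by $\D_g$.

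The key axioms are (MC4) and (MC5). For (MC4), given a lifting square with a cofibration $\varphi$ on the left and a fibration $p$ on the right, one of them a weak equivalence, I take a discretization $\D_\varphi$ witnessing $\varphi$; the source and target of $\varphi$ are discretized by $\D_\varphi$, but those of $p$ need not be. I restrict the whole square to $\D_\varphi$, where $\varphi^{\alpha_\varphi}$ is a cofibration and $p^{\alpha_\varphi}$ a fibration (and the appropriate one a weak equivalence), and solve the lifting problem there by \cref{def_finite_cofibration}. Applying $\alpha^k$ to the resulting lift and postcomposing with the counit onto the target of $p$ yields a morphism in $\tame(\pos,\C)$; a short chase using naturality of the counit, together with the fact that the counits on the source and target of $\varphi$ are isomorphisms, confirms that it solves the original square. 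For (MC5), I discretize $\varphi\colon X\to Y$ along a common transfer-admitting $\alpha\colon\D\subset\pos$, factor $\varphi^\alpha$ in $\Fun(\D,\C)$ by \cref{def_finite_cofibration} as a cofibration followed by a trivial fibration (and, dually, as a trivial cofibration followed by a fibration), and apply $\alpha^k$. The dictionary guarantees that $\alpha^k$ sends each factor to a morphism of the required type, and the middle object is tame because it is a left Kan extension.

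Finally, for the minimal cofibrant factorization axiom I invoke the last clause of \cref{def_finite_cofibration}: if $\C$ satisfies it then so does $\Fun(\D,\C)$. Given $\varphi\colon X\to Y$ discretized by a common transfer-admitting $\alpha$, I take the minimal cofibrant factorization of $\varphi^\alpha$ in $\Fun(\D,\C)$ and apply $\alpha^k$; the result is a cofibrant factorization of $\varphi$ by (MC5). To see it is minimal, any endomorphism of the factorization restricts along $\alpha$ to an endomorphism of the minimal factorization in $\Fun(\D,\C)$, hence an isomorphism, and \cref{prop_full_lke}.\textit{6} promotes this to an isomorphism upstairs. The main obstacle, and the reason the transfer hypothesis is indispensable, is precisely the dictionary: only because $\alpha^k$ is a precomposition does it preserve the objectwise classes of fibrations and weak equivalences, since a genuine left Kan extension would replace values by colimits that need not preserve these classes. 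The secondary delicate point is that in the lifting and factorization arguments the objects are not all discretized by a single subposet, which I handle by working over the discretization of the cofibration involved and repairing the discrepancy with the counit.
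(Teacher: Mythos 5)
Your proof is correct and follows essentially the same route as the paper's: reduce each axiom to the finite model structure of \cref{def_finite_cofibration} along a discretization, solve the lifting problem downstairs and transport the lift back through the adjunction $\alpha^k \dashv (-)^\alpha$ using the counit isomorphisms (no transfer needed for (MC4)), and use the identification of $\alpha^k$ with precomposition by the transfer to carry (minimal) cofibrant factorizations upstairs, with \cref{prop_full_lke}.\textit{6} supplying minimality. The only differences are cosmetic: your retract argument for (MC3) makes explicit what the paper dismisses as clear, and the common transfer-admitting discretization that you defer to \cref{cor_closedcolim} is obtained in the paper by applying the hypothesis to the tame coproduct $X \coprod Y$.
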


\begin{proof}
Property (MC1) of a model category is satisfied by \cref{cor_closedcolim}.   
Properties (MC2) and (MC3) are clear from the definitions.
To show (MC4), let $\varphi$ be a cofibration in $\tame\left(\pos,\C\right)$. 
Let $\alpha\colon \D\subset \pos$ be a finite full subposet discretizing both $X$ and $Y$ and for which the restriction $\varphi^\alpha\colon X\alpha\to Y\alpha$ is a cofibration in $\Fun\left(\D,\C\right)$. 
Consider the following commutative diagrams:
\[
\begin{tikzcd}
\alpha^k(X\alpha)\ar{r}{\cong}\ar{d}[swap]{\alpha^k(\varphi^\alpha)} & X\ar{d}[swap]{\varphi} \ar{r} & E\ar{d}{\psi}\\
\alpha^k(Y\alpha)\ar{r}{\cong} & Y\ar{r}\ar[dotted]{ur} & B
\end{tikzcd}
\ \ \ \ \ \ \ \ \ \ 
\begin{tikzcd}
X\alpha\ar[swap]{d}{\varphi^\alpha}\ar{r} & E\alpha\ar{d}{\psi^{\alpha}}\\
Y\alpha\ar{r}\ar[dotted]{ur}{s} & B\alpha
\end{tikzcd}
\]
where
\begin{itemize}
    \item $\psi$ is a fibration and either $\varphi$ or $\psi$ is a weak equivalence;
    \item the solid horizontal arrows in the right diagram correspond via adjunction to the  morphisms represented by the compositions of the solid horizontal arrows in the left diagram;
    \item $s$ is a lift given by the (MC4) axiom
    for the model structure on  $\Fun\left(\D,\C\right)$ described in \cref{def_finite_cofibration};
    \item the dotted arrow in the left diagram corresponds via adjunction to $s$.
\end{itemize}
The desired lift is then given by the dotted arrow in the left diagram. 
Note that, in the argument showing the lifting axiom (MC4), the fact that all tame functors can be discretized along a poset functor admitting a transfer is not used. 
Transfer is needed to ensure the factorization axiom (MC5) and the minimality statement. 

While precomposing preserves objectwise weak equivalences and fibrations, taking the left Kan extension in general does not. 
Nevertheless, if $ \alpha\colon \D\to \pos$ is a poset functor admitting a transfer, then the left Kan extension along $\alpha$ 
also preserves objectwise weak equivalences and fibrations because it can be expressed as a precomposition with the transfer.
To show (MC5) and the minimality statement,  consider a  morphism $\varphi\colon X\to Y$ in $\tame\left(\pos,\C\right)$.  Since tame functors are closed under colimits, the coproduct
$X\coprod Y$ is tame. We can then choose a finite full subposet $\alpha\colon \D\subset \pos$ that admits a transfer and discretizes this coproduct, and hence discretizes both $X$ and $Y$. Consider  a factorization
$X\alpha\to Z\to Y\alpha$ of $\varphi^\alpha$ given by the model structure 
on  $\Fun\left(\D,\C\right)$ (see~\cref{def_finite_cofibration}).
Then the desired factorization of $\varphi$ is given by the left Kan extensions $\alpha^k(X\alpha\to Z \to Y\alpha)$. 
This shows (MC5). 
If the initial factorization is 
a minimal cofibrant factorization of $\varphi^{\alpha}$, then, according to~\cref{prop_full_lke}.\textit{6}, so is $\alpha^k(X\alpha\to Z \to Y\alpha)$, which shows the minimality statement. 
\end{proof}

We conclude this section with:

\begin{proposition}\label{char_cofib}
Let $\C$ be a model category.
Assume that each functor in $\tame(\pos,\C)$ can be discretized by a poset functor admitting a transfer.
The following statements about a natural transformation $\varphi\colon X\to Y$ in  $\tame(\pos,\C)$ are equivalent:
\begin{enumerate}
    \item $\varphi$ is a cofibration.
    \item For every finite full subposet $\alpha\colon\D\subset\pos$  admitting transfer and discretising both $X$and $Y$, the restriction $\varphi^\alpha\colon X\alpha\to Y\alpha $ is a cofibration
    in $\Fun(\D,\C)$.
\end{enumerate}
\end{proposition}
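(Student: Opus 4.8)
The plan is to show that the cofibration property of $\varphi$ is independent of the chosen discretizing subposet, after which both implications follow quickly. The implication $(2)\Rightarrow(1)$ is the easier one. By \cref{cor_closedcolim} the coproduct $X\coprod Y$ is again tame, so by hypothesis and \cref{dsgdfh}.\textit{3} it is discretized by a finite full subposet inclusion $\alpha\colon\D\subset\pos$ that admits a transfer; as in the proof of \cref{th_tame_mod}, such a $\D$ discretizes both $X$ and $Y$. Statement \textit{2} applied to this particular $\D$ gives that $\varphi^\alpha$ is a cofibration in $\Fun(\D,\C)$, which is exactly the defining condition (\cref{th_tame_mod}) for $\varphi$ to be a cofibration, so \textit{1} holds.

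For $(1)\Rightarrow(2)$ I would isolate the following independence lemma, which is the real content of the proposition: if $\alpha\colon\D\subset\pos$ and $\alpha'\colon\D'\subset\pos$ are inclusions of finite full subposets with $\D\subseteq\D'$, and both discretize $X$ and $Y$, then $\varphi^\alpha$ is a cofibration in $\Fun(\D,\C)$ if, and only if, $\varphi^{\alpha'}$ is a cofibration in $\Fun(\D',\C)$. Granting this, the implication is immediate: if $\varphi$ is a cofibration there is some finite full subposet $\D_0$ discretizing both with $\varphi^{\D_0}$ a cofibration; given any finite full subposet $\D$ admitting a transfer and discretizing both, the full subposet $\D''$ generated by $\D_0\cup\D$ is finite and, by \cref{prop_full_lke}.\textit{1}, discretizes both. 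Two applications of the lemma (to $\D_0\subseteq\D''$ and then to $\D\subseteq\D''$) transport cofibrancy from $\D_0$ to $\D$, yielding \textit{2}. The transfer hypothesis on $\D$ is in fact not needed here; the lemma shows that the notion is independent of every common discretization.

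To prove the lemma I would argue through latching objects. Writing $\iota\colon\D\hookrightarrow\D'$ for the inclusion, the colimit formula for left Kan extensions recalled at the start of \cref{sec_tameness} shows that, since both $\D$ and $\D'$ discretize $X$, the restriction $X\alpha'$ is the left Kan extension $\iota^k(X\alpha)$ along $\iota$: both give $X(x)\cong\colim_{\D\le x}X\alpha$ for every $x\in\D'$. The same holds for $Y$, so $\varphi^{\alpha'}=\iota^k(\varphi^\alpha)$. The key computation is then a cofinality argument identifying the latching objects: for every $x\in\D'$ one has $\colim_{\D'<x}X\alpha'\cong\colim_{\D<x}X\alpha$ (and similarly for $Y$), obtained by exhibiting the projection $(x',d)\mapsto d$ from the indexing category of the iterated colimit onto $\D<x$ as a final functor.

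With this in hand the two cases separate cleanly. For $x\in\D$ the latching objects and the values $X(x)$, $Y(x)$ all agree for $\D$ and $\D'$, so the mediating morphisms of \eqref{mediating_morph_cofibration} coincide and the cofibration condition at $x$ is literally the same in both functor categories. For $x\in\D'\setminus\D$ one has $\D<x=\D\le x$, so the latching map $\colim_{\D'<x}X\alpha'\to X(x)$ is an isomorphism; since the pushout of an isomorphism is an isomorphism, the mediating morphism $\widehat{\varphi}(x)$ is itself an isomorphism, hence a cofibration in $\C$, and imposes no condition. Consequently $\varphi^{\alpha'}$ is a cofibration precisely when the condition holds at all $x\in\D$, that is, precisely when $\varphi^\alpha$ is a cofibration, which proves the lemma. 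I expect the main obstacle to be exactly this latching-object identification, namely setting up the iterated colimit and checking that the projection onto $\D<x$ is final (nonempty and connected comma categories); the remainder is bookkeeping with the already-established properties of discretizations.
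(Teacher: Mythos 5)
Your proof is correct, but it takes a genuinely different route from the paper's. For $(1)\Rightarrow(2)$ the paper never compares two discretizations: it invokes the characterization of cofibrations as the morphisms with the left lifting property against fibrations that are also weak equivalences, and it uses the transfer hypothesis on $\D$ in an essential way --- since $\alpha$ admits a transfer, $\alpha^k$ is precomposition with the transfer (\cref{dsgdfh}), hence sends a fibration-and-weak-equivalence $\psi$ in $\Fun(\D,\C)$ to one in $\tame(\pos,\C)$; the lift of $\varphi\cong\alpha^k(\varphi^\alpha)$ against $\alpha^k(\psi)$ provided by (MC4) in $\tame(\pos,\C)$ then restricts along $\alpha$ to the required lift, showing $\varphi^\alpha$ is a cofibration. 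Your argument instead proves an independence-of-discretization lemma by identifying latching objects across nested discretizations (the finality of the projection from the Grothendieck construction onto $\D<x$ checks out, and at $x\in\D'\setminus\D$ the condition is indeed vacuous because the latching maps of both $X\alpha'$ and $Y\alpha'$ are isomorphisms there, and isomorphisms are cofibrations), and then transports cofibrancy through the common enlargement generated by $\D_0\cup\D$ using \cref{prop_full_lke}.\textit{1}. This costs more colimit bookkeeping but buys a strictly stronger conclusion: statement \textit{2} holds for \emph{every} common finite full discretization, whether or not it admits a transfer --- something the paper's lifting argument cannot deliver, since without the transfer it has no way to push fibrations-and-weak-equivalences from $\Fun(\D,\C)$ into $\tame(\pos,\C)$. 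Your treatment of $(2)\Rightarrow(1)$ matches the paper's (which dismisses it as clear from the definition) but is more careful, since you verify via the coproduct trick from the proof of \cref{th_tame_mod} that the universally quantified statement \textit{2} is not vacuously true.
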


\begin{proof}
That statement~\textit{2} implies statement~\textit{1} is clear from the definition. For the other implication, assume  $\varphi\colon X\to Y$ is a cofibration in $\tame(\pos,\C)$. Recall that cofibrations are all, and only, those morphisms with the left lifting property with respect to fibrations and weak equivalences (see~\cite{dwyerspalinski}).
Let $\alpha\colon \D\subset \C$ be a full subposet inclusion admitting transfer and discretizing both $X$ and $Y$. 
Consider the following commutative diagrams:
\[
\begin{tikzcd}
X\alpha\ar[swap,"\varphi^\alpha"]{d}\ar{r} & E\ar{d}{\psi}\\
Y\alpha\ar{r}\ar[dotted]{ur} & B
\end{tikzcd}\ \ \ \ \ \ \ \ \ \ \ \ 
\begin{tikzcd}
X\ar{d}[swap]{\varphi} & \alpha^k(X\alpha)\ar{l}[swap]{\cong}\ar{r}\ar{d}[swap]{\alpha^k(\varphi^\alpha)} & \alpha^k(E)\ar{d}{\alpha^k(\psi)}\\
Y & \alpha^k(Y\alpha)\ar{l}[swap]{\cong}\ar{r}\ar[dotted]{ur} & \alpha^k(B)
\end{tikzcd}
\]
where
\begin{itemize}
    \item $\psi$ is a fibration and a weak equivalence;
    \item the right square of the right diagram is the left Kan extension applied to the left diagram;
    \item the dotted arrow in the right diagram is the lift given by the assumption that $\varphi$ is a cofibration;
    \item the dotted arrow in the left diagram is the restriction of the dotted arrow in the right diagram along $\alpha$.
\end{itemize}
The existence of the dotted arrow in the left diagram shows that $\varphi^\alpha$ is a cofibration. 
\end{proof}

\section{Chain complexes of tame functors vs tame functors of chain complexes}\label{sec_chainVStame}
Consider a  functor  $X\colon \pos\to\ch(\C)$ in $\tame(\pos,\ch(\C))$. 
If $\D\subset  \pos$ is a finite subposet that discretizes  $X$, then, in every degree $n$, the subposet $\D$ also discretizes the functor $X_n\colon \pos\to\C$,  and, consequently, $X_n$  is  tame for every degree $n$.
For every $n$, the boundary morphisms  $\{X(x)_{n+1}\to X(x)_{n}\}_{x\in \pos}$ induce a natural transformation $X_{n+1}\to X_{n}$. 
These natural transformations form a chain complex and hence an object in $\ch(\tame(\pos, \C))$ which we denote by $I(X)$. 
In an analogous way, we can assign to a morphism in $\tame(\pos,\ch(\C))$ a morphism in  $ \ch(\tame(\pos, \C))$.
This defines a fully faithful and essentially injective functor which we denote by the symbol $I\colon \tame(\pos,\ch(\C))\subset \ch(\tame(\pos, \C))$.
Note that $I$ may fail to be essentially surjective. 
Indeed, one can observe that, by definition of $I$, the objects in $\ch(\tame(\pos, \C))$ that are in the image of $I$ are those chain complexes $X$ such that there exists a finite subposet $\D\subset\pos$ discretizing $X_n$ in every degree $n$. 
The existence of such a common $\D$ may fail if $X$ is unbounded.
Although $I$ is not essentially surjective, it still enjoys the following properties. 

\begin{proposition}\label{prop_Ipreservesstuff}
Assume   $\C$ is an abelian category satisfying the minimal cover axiom and $\pos$ is a poset such that every functor in 
$\tame(\pos,\ch(\C))$ can be discretized by a poset functor admitting a transfer. 
Then, with respect to the abelian and model structures on $\tame(\pos,\ch(\C))$ and $\ch(\tame(\pos, \C))$
given by~\cref{th_tame_ab} and~\cref{th_tame_mod}, the functor $I\colon \tame(\pos,\ch(\C))\to \ch(\tame(\pos, \C))$ has the following properties:
\begin{enumerate}
    \item A sequence in $\tame(\pos,\ch(\C))$ is exact if, and only if, applying $I$ to it returns an exact sequence in $\ch(\tame(\pos, \C))$.
    \item An object $X$ in $\tame(\pos,\ch(\C))$  is projective if, and only if, the object  $I(X)$ is projective in $\ch(\tame(\pos, \C))$. 
    \item A morphism $\phi$  is a weak equivalence/fibration/cofibration in   $\tame(\pos,\ch(\C))$
    if, and only if,  $I(\phi)$ is  a weak equivalence/fibration/cofibration in
    $\ch(\tame(\pos, \C))$.
\end{enumerate}
\end{proposition}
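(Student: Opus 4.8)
The plan is to exploit that $I$ is, concretely, a reindexing that exchanges the chain-complex and poset directions: by construction $I(X)_n=X_n$ and the chain complex $(I(X)(x))_\bullet$ equals $X(x)$, so for every $x\in\pos$ and every degree $n$ one has $I(X)_n(x)=X(x)_n$. For statement \textit{1} I would first recall that, by \cref{th_tame_ab}.\textit{1} (via \cref{cor_closedcolim}), kernels and cokernels in $\tame(\pos,\ch(\C))$ are computed objectwise in $\ch(\C)$, hence degreewise-and-objectwise in $\C$, whereas kernels and cokernels in $\ch(\tame(\pos,\C))$ are computed degreewise in $\tame(\pos,\C)$, hence objectwise-and-degreewise in $\C$. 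Thus exactness of a sequence in either category is equivalent to exactness of the induced sequence in $\C$ for every pair $(x,n)$; since $I$ preserves the $(x,n)$-component, the two conditions literally coincide. In particular $I$ is exact and reflects exactness, a fact I will reuse below.

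For statement \textit{2} I would treat the two implications asymmetrically. For the forward direction I use the explicit description of projectives: by \cref{asfddhdhfj} together with \cref{prop:projective_characterization}, a projective $X$ in $\tame(\pos,\ch(\C))$ is a finite-type direct sum of functors $D^n(A)(z,-)$ with $A$ projective in $\C$. The key computation is $I\bigl(D^n(A)(z,-)\bigr)\cong D^n\bigl(A(z,-)\bigr)$; since $A(z,-)$ is projective in $\tame(\pos,\C)$ (again \cref{asfddhdhfj}) and $I$ is additive and commutes with the finite-type direct sums involved, \cref{prop:projective_characterization} applied to the abelian category $\tame(\pos,\C)$ shows $I(X)$ is projective. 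For the converse I avoid the failure of essential surjectivity of $I$ by an abstract lifting argument: given an epimorphism $q\colon W\to Y$ and a morphism $f\colon X\to Y$ in $\tame(\pos,\ch(\C))$, statement \textit{1} makes $I(q)$ an epimorphism, projectivity of $I(X)$ provides a lift $g$ of $I(f)$ through $I(q)$, and full faithfulness of $I$ transports $g$ back to a lift of $f$ through $q$; hence $X$ is projective.

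For the weak-equivalence and fibration parts of statement \textit{3} I would unwind the definitions to the common $(x,n)$-level. A morphism $\varphi$ is a weak equivalence (resp.\ fibration) in $\tame(\pos,\ch(\C))$ iff each $\varphi(x)$ is a quasi-isomorphism (resp.\ an epimorphism in positive degrees) in $\ch(\C)$, by \cref{th_tame_mod} and \cref{prop:ch_model}. On the other side, $I(\varphi)$ is a weak equivalence (resp.\ fibration) in $\ch(\tame(\pos,\C))$ iff $H_n(I(\varphi))$ is an isomorphism in $\tame(\pos,\C)$ for all $n$ (resp.\ $\varphi_n$ is an epimorphism in $\tame(\pos,\C)$ for $n\ge 1$). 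Since homology, isomorphisms, and epimorphisms in $\tame(\pos,\C)$ are all detected objectwise, and $H_n(I(X))(x)=H_n(X(x))$, both conditions reduce to the same objectwise-and-degreewise statement in $\C$.

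The cofibration part of statement \textit{3} is the delicate one, and where I expect the main obstacle. Here I would not argue objectwise but reduce to a finite subposet: using \cref{cor_closedcolim} and \cref{dsgdfh}.\textit{3}, choose a finite full subposet $\alpha\colon\D\subset\pos$ admitting a transfer and discretizing both $X$ and $Y$. By \cref{char_cofib}, $\varphi$ is a cofibration in $\tame(\pos,\ch(\C))$ iff $\varphi^\alpha$ is a cofibration in $\Fun(\D,\ch(\C))$ with the projective model structure of \cref{def_finite_cofibration}. The crucial observation is that, under the canonical identification $\Fun(\D,\ch(\C))\cong\ch(\Fun(\D,\C))$, this projective model structure and the model structure of \cref{prop:ch_model} on the abelian category $\Fun(\D,\C)$ have the \emph{same} weak equivalences (objectwise quasi-isomorphisms) and the \emph{same} fibrations (objectwise epimorphisms in positive degrees); since in any model category the cofibrations are exactly the maps with the left lifting property against the acyclic fibrations, the two structures must share the same cofibrations. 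Hence $\varphi^\alpha$ is a cofibration iff each $(\varphi^\alpha)_n=(\varphi_n)^\alpha$ is a monomorphism with projective cokernel in $\Fun(\D,\C)$. Finally I transport this back across $\alpha$: because $(-)^\alpha$ and $\alpha^k$ are exact and mutually inverse on $\D$-discretized functors and both preserve the relevant projectives (\cref{asfddhdhfj} and \cref{afashdsfjg}), $(\varphi_n)^\alpha$ is a monomorphism with projective cokernel in $\Fun(\D,\C)$ iff $\varphi_n$ is so in $\tame(\pos,\C)$, which by \cref{prop:ch_model} for $\tame(\pos,\C)$ is precisely the condition that $I(\varphi)$ be a cofibration in $\ch(\tame(\pos,\C))$. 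The real work is thus this comparison of two a priori different model structures at the finite level, together with the transfer-compatibility of monomorphisms and projective cokernels, rather than any single long computation.
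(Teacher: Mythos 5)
Your proposal is correct, and on statements \textit{1} and the weak-equivalence/fibration part of \textit{3} it essentially matches the paper's proof (reduce everything to the common objectwise-and-degreewise level in $\C$). On the two substantive claims, however, you take a genuinely different route. For statement \textit{2}, the paper proves the hard direction ($X$ projective $\Rightarrow I(X)$ projective) by an abstract lifting argument: discretize along a transfer-admitting $\alpha$, apply $\alpha^k((-)\alpha)$ to the given epimorphism in $\ch(\tame(\pos,\C))$ so as to land back in the image of $I$, lift there using projectivity of $X$, and compose; your proof instead uses the structure theory (\cref{asfddhdhfj} together with \cref{prop:projective_characterization}) to write $X$ as a finite-type sum of functors $D^n(A)(z,-)$, computes $I(D^n(A)(z,-))\cong D^n(A(z,-))$, and recognizes the result as projective. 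Your easy direction coincides with the paper's. For the cofibration part of statement \textit{3}, the paper again runs two lifting arguments (fullness of $I$ for one implication, discretize-and-Kan-extend for the other), whereas you characterize cofibrations on both sides explicitly: \cref{char_cofib}, the identification $\Fun(\D,\ch(\C))\cong\ch(\Fun(\D,\C))$, and the observation that two model structures with the same weak equivalences and fibrations must share their cofibrations reduce everything to ``degreewise monomorphism with projective cokernel,'' which you then transport across $\alpha$. Your route has more moving parts, but it yields as a byproduct an explicit description of cofibrations in $\tame(\pos,\ch(\C))$ that the paper never states; the paper's route is shorter and avoids any structure theorems.

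One step in your cofibration argument needs more care than your citations suggest: the claim that $\varphi_n$ has projective cokernel in $\tame(\pos,\C)$ implies $(\varphi_n)^\alpha$ has projective cokernel in $\Fun(\D,\C)$. Restriction along $\alpha$ does \emph{not} preserve projectivity in general: for $w\notin\D$ the restriction of the projective $P(w,-)$ can fail to be projective on $\D$ (take $\D=\{a,b,e\}$ with $a,b<e$ incomparable and $w<a$, $w<b$; then $H_0^e$ of the restriction vanishes while the restriction is nonzero at $e$, so \cref{afashdsfjg} fails for it). The implication does hold in your situation, but only because $\coker(\varphi_n)\cong\alpha^k\bigl(\coker((\varphi_n)^\alpha)\bigr)$ is itself $\D$-discretized, and then one needs a short lifting argument rather than \cref{asfddhdhfj}/\cref{afashdsfjg} as cited: given an epimorphism $p$ and a morphism $f$ in $\Fun(\D,\C)$, apply the exact functor $\alpha^k$, lift against $\alpha^k p$ using projectivity of $\coker(\varphi_n)$ in $\tame(\pos,\C)$, and restrict back along $\alpha$ using the unit isomorphism of \cref{prop_full_lke}.\textit{3}. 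This is a repairable imprecision, not a gap in the approach.
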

\begin{proof}
\textit{1}.\quad A sequence $0\to X\to Y\to Z\to 0$ is exact in $\tame(\pos,\ch(\C))$ if, and only if, $0\to X_n(x)\to Y_n(x)\to Z_n(x)\to 0$ is exact in $\C$ for every $x$ in $\pos$ and every natural number $n$. 
But this is also equivalent to exactness of $0\to I(X)\to I(Y)\to I(Z)\to 0$ in $\ch(\tame(\pos, \C))$. 

\noindent
\textit{2}.\quad  Since $I$ preserves epimorphisms (by statement \textit{1}) and is a full embedding, if $I(X)$ is projective in $\ch(\tame(\pos, \C))$, then $X$ is projective in $\tame(\pos, \ch(\C))$. 
For the reverse implication, consider a projective object $X$ in $\tame(\pos, \ch(\C))$, a morphism $I(X)\to Y$ and an epimorphism $Z\to Y$ in $\ch(\tame(\pos, \C))$. 
Let  $\alpha \colon \D \subset\pos$ be a finite full subposet inclusion that discretizes $X$. Then  the natural transformation  $\alpha^k (X\alpha)\to X$ is an isomorphism (see \cref{prop_full_lke}) and consequently so is
$\alpha^k (I(X)\alpha)\to I(X)$, where $\alpha^k(I(X)\alpha)$ denotes the chain complex obtained by applying first precomposition by $\alpha$ and then $\alpha^k$ degree-wise to $X$ (in degree $n$ it is given by $\alpha^k(X_n\alpha)$).
 
Both left Kan extensions and precomposition preserve epimorphisms, hence the epimorphism $Z\to Y$ in $\ch(\tame(\pos, \C))$ is sent to an epimorphism $\alpha
^k(Z\alpha)\to \alpha^k(Y\alpha)$ in $\tame(\pos, \ch(\C))$. 
This gives rise to the following diagram: 
\[
\begin{tikzcd}
& & \alpha^k(Z\alpha)\ar[dd]\ar[dr] & \\
& & & Z\ar[dd]\\
\alpha^k(I(X)\alpha)\ar[rr]\ar[dashed, uurr, bend left=25pt]\ar[swap,"\cong", dr] & & \alpha^k(Y\alpha)\ar[dr] & \\
& I(X)\ar[rr]\ar[dashed, uurr, crossing over, bend left=25pt] & & Y
\end{tikzcd}
\]
where the top dashed arrow is constructed using the projectiveness of 
$X$ in $\tame(\pos, \ch(\C))$, and the other dashed arrow,  obtained by composition, is the desired lift in $\ch(\tame(\pos, \C))$. 

\noindent
\textit{3}.\quad The functor $I$ preserves fibrations and weak equivalences because they are defined pointwise.
Since $I$ in addition is a full inclusion, 
a cofibration in $\ch(\tame(\pos, \C)$ is also a cofibration in $\tame(\pos, \ch(\C))$. 
Assume that $\varphi\colon X\to Y$ is a cofibration in $\tame(\pos, \ch(\C))$. 
To show that $I(\varphi)\colon I(X)\to I(Y)$ is a cofibration in $\ch(\tame(\pos, \C))$, we take $\alpha $ discretizing both $X$ and $Y$ (it exists by definition of cofibration, see \cref{th_tame_mod}) so to have $X\cong \alpha^k(X\alpha)$ and $Y\cong \alpha^k(Y\alpha)$, and consider the following diagram
\[
\begin{tikzcd}
I(X)\ar["\cong", r]\ar[bend left=20pt, rrr]\ar["I(\varphi)"', d] 
& \alpha^k(I(X)\alpha)\ar[r]\ar["\alpha^k(\varphi^\alpha)"',d]
& \alpha^k(E\alpha) \ar[r]\ar["\alpha^k(\psi^\alpha)",d] 
& E\ar["\psi",d]
\\
I(Y)\ar["\cong", r]\ar[bend right=15pt, rrr] 
& \alpha^k(I(Y)\alpha)\ar[r]\ar[dashed, ur]
&  \alpha^k(B\alpha) \ar[r] 
& B
\end{tikzcd}
\]
Here, $\psi$ is a fibration and weak equivalence. So, to see that $I(\varphi)$ is a cofibration, we just need to show that a lift $I(Y)\to E$ exists making the outer square commute. 
By definition, $\alpha^k(\varphi^\alpha)$ is a cofibration in $\tame(\pos, \ch(\C))$ because $\varphi$ is so, and $\alpha^k(\psi^\alpha)$ is a fibration and weak equivalence because $\psi$ is so, and both precomposition and left Kan extension preserve fibrations and weak equivalences. 
Thus, the central diagram in $\tame(\pos, \ch(\C))$ admits the lift represented by the dashed arrow. 
The desired lift in $\ch(\tame(\pos, \C))$ is then obtained by composition. 
\end{proof}

\section{Posets of dimension at most 1}\label{sec:realizations}
To ensure that all tame functors indexed by a poset $\pos$  can be discretized by poset functors admitting transfers
 different hypotheses can be imposed on $\pos$. 
For example, being an upper semilattice, as investigated in~\cite{realisations_posets}.  
This section aims to introduce another class of examples of indexing posets that guarantee the existence of such discretizations. These are posets of dimension at most $1$.

\begin{definition}\label{sdgdfhdfgh}
Let $\pos$ be a poset. 
If no two different elements in $\pos$ are comparable, then we write $\dim(\pos)=0$ and call  $\pos$ a poset of \define{dimension $0$}.
We also use the term discrete poset to describe a poset of dimension $0$. If 
 relations $w\leq u \leq x$ and $w\leq v \leq x$ in $\pos$ imply that $u$ and $v$
are comparable, then we write $\dim(\pos)\leq 1$ and call  $\pos$ a poset of \define{dimension at most $1$}. If $\dim(\pos)\leq 1$ and $\pos$ is not discrete, then we say that $\pos$ is of \define{dimension $1$} and  write $\dim(\pos)=1$.
\end{definition}

\begin{figure}[h!]
\centering
\begin{subfigure}[h]{.26\linewidth}
\centering
\adjustbox{scale=.7}{
\begin{tikzcd}
\textcolor{white}{a} & & \\
	a_1 \arrow[r] & a_2 \arrow[r]& a_4 \\
	& a_3 \arrow[ur]& & 
\end{tikzcd}
}
\caption{}
\label{poset_dim1a}
\end{subfigure}
\begin{subfigure}[h]{.26\linewidth}
\centering
\adjustbox{scale=.7}{
\begin{tikzcd}
&[-2em] & & [-2em] b_4 \\
[-4ex]
& b_3 & & \\
[2ex]
b_1 \arrow[ur] \arrow[uurrr] & & b_2 \arrow[uur] \arrow[ul, crossing over] &
\end{tikzcd}
}
\caption{}
\label{poset_dim1b}
\end{subfigure}
\begin{subfigure}[h]{.29\linewidth}
\centering
\adjustbox{scale=.7}{
\begin{tikzcd}
& c_4 & \\
c_2 \arrow[ur]& & c_3 \arrow[ul] \\
& c_1 \arrow[ur] \arrow[ul] & &
\end{tikzcd}
}
\caption{}
\label{poset_dim2}
\end{subfigure}
\caption{Hasse diagram of three posets. 
The zig-zag poset (a) and the fence poset (b) are of dimension $1$ but  (c) is not.
}
\label{fig_ex_dim_poset}
\end{figure}

It is clear from this definition that a poset is of dimension at most $1$  if, and only if, for every two comparable elements there exists a unique monotone path between them. 
In particular,
if $\pos$ is a poset of dimension at most 1, then for every full subposet $\mathcal{S}$, $\dim (\mathcal{S})\le \dim (\pos)$.  
Examples of posets of dimension  $1$, in addition to the two depicted in \cref{fig_ex_dim_poset}, include $\mathbb{N}$,
$\mathbb{R}$, trees, and, in particular, zig-zags. 
The poset $\mathbb{R}^2$ and the posets in \cref{fig:gluing-posets} are examples of posets whose dimensions are not at most $1$.
\medskip 

For a poset $\pos$ and an element $x\in\pos$, the symbol $\mathcal{P}(x)$ denotes the set of all elements $y$ in $\pos$ covered by $x$, where an element $y$ in $\pos$ is covered by $x$ if $y<x$ and there is no $z$ such that $y<z<x$. 
\medskip

Let now $\pos$ be a poset of dimension at  most $1$, and $\R(\pos)$ be the disjoint union of $\pos$, and ${\mathcal{I}(\pos)}\coloneqq\coprod_{x\in \pos}\coprod_{y\in\mathcal{P}(x) } (-1,0)$,
where $(-1,0)$ is the interval of negative real numbers strictly bigger than $-1$.
Elements of $\mathcal{I}(\pos)$
are denoted as triples $(x,y,t)$ where $x$ covers $y$ in $\pos$, and $t$ is in $(-1,0)$.
Note that $\mathcal{I}(\pos)$ is non-empty
if, and only if, there is $x$ in $\pos$ for which
$\mathcal{P}(x)$ is non-empty. 
The symbols $\pi_0,\pi_{-1}\colon\R(\pos)\to\pos$ and $T\colon \R(\pos)\to (-1,0]$  denote the  functions:
\[\pi_0(z):=\begin{cases}
    z& \text{ if } z\in\pos\\
    x&\text{ if } z=(x,y,t)\in\mathcal{I}(\pos)
\end{cases}
\ \ \ \ \ \ \ \ \ \ 
\pi_{-1}(z):=\begin{cases}
    z& \text{ if } z\in\pos\\
    y&\text{ if } z=(x,y,t)\in \mathcal{I}(\pos)
\end{cases}\]
\[
T(z) := 
\begin{cases}
    0& \text{ if } z\in\pos\\
    t&\text{ if } z=(x,y,t)\in\mathcal{I}(\pos)
\end{cases}
\]

We are going to consider the following relation $z\leq z'$ on the set $\R(\pos)$:
\begin{itemize}
\item if $z,z'$ are in $\pos$, then $z\leq z'$ in $\R(\pos)$
if, and only if, $z\leq z'$ in $\pos$;
\item if $z=(x,y,t)$  and 
$z'$ is in $\pos$, then
 $z\leq z'$ in $\R(\pos)$ if, and only if,
 $x\leq z'$ in $\pos$;
\item if $z$ is in  $\pos$ and $z'= (x',y',t')$, then 
$z\leq z'$ in $\R(\pos)$ if, and only if,
$z\leq y'$ in $\pos$;
\item if $z=(x,y,t)$ and $z'= (x',y',t')$, then $z\leq z'$ in $\R(\pos)$
if, and only if, either $x=x'$, $y=y'$, and
$t\leq t'$ or $x<x'$ and $y<y'$.
\end{itemize}

The introduced relation on $\R(\pos)$ is reflexive, antisymmetric, and transitive. 
The obtained poset $(\R(\pos), \leq)$ is called the \define{realization} of $\pos$.
This construction is a particular case of the more general realization of an arbitrary poset (not only of dimension at most $1$) introduced in~\cite{realisations_posets}. 
Note that the functions $\pi_0,\pi_{-1}$ are poset functors. Moreover, $z\leq z'$ in $\R(\pos)$ if, and only if,  
either $\pi_0(z)\leq \pi_{-1}(z')$, or $\pi_0(z)=\pi_0(z')$, $\pi_{-1}(z)=\pi_{-1}(z')$ and $T(z)\leq T(z')$.
In particular, for $x$ and $y$ in $\pos$, the relation $x\leq y$ holds in $\pos$ if, and only if, the analogous relation holds in
$\R(\pos)$. 

As an example of realization, consider the poset $\mathbb{N}$. 
Then $\R(\mathbb{N})$ is isomorphic to the poset of non-negative real numbers  $[0,\infty)$. 
Indeed, the map $\R(\mathbb{N})\to [0,\infty)$ that sends 
$z$ to $\pi_0(z)+T(z)$ is an isomorphism of posets.

\begin{proposition}\label{prop_real_dim1}
If $\pos$ be a poset of dimension at most $1$, then 
$\dim(\R(\pos)) = \dim(\pos)$.
\end{proposition}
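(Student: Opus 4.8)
The plan is to split the equality into the bound $\dim(\R(\pos)) \le 1$ together with a matching of the ``dimension $0$ versus dimension $1$'' dichotomy, and to dispatch the dichotomy first since it is the easy half. If $\dim(\pos) = 0$, then no two elements of $\pos$ are comparable, so $\mathcal{P}(x) = \varnothing$ for every $x$, whence $\mathcal{I}(\pos) = \varnothing$ and $\R(\pos) = \pos$ is discrete. Conversely, if $\pos$ is not discrete, then, since $x \le y$ holds in $\pos$ if and only if it holds in $\R(\pos)$, the copy of $\pos$ sitting inside $\R(\pos)$ already contains two distinct comparable elements, so $\R(\pos)$ is not discrete. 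Thus, once $\dim(\R(\pos)) \le 1$ is known, we read off $\dim(\R(\pos)) = 0$ exactly when $\dim(\pos) = 0$ and $\dim(\R(\pos)) = 1$ exactly when $\dim(\pos) = 1$.

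It then remains to prove $\dim(\R(\pos)) \le 1$, so I would fix $w \le u \le x$ and $w \le v \le x$ in $\R(\pos)$ and aim to show that $u$ and $v$ are comparable. The first step is to transport the four relations into $\pos$ along the monotone projections $\pi_0, \pi_{-1} \colon \R(\pos) \to \pos$, using the two elementary facts that $\pi_{-1}(z) \le \pi_0(z)$ for every $z$ (with equality on $\pos$, and strict inequality $y < x$ on a triple $(x,y,t)$) and that $x$ covering $y$ means no element lies strictly between $y$ and $x$. Applying monotonicity to the hypotheses shows that each of $\pi_0(u), \pi_{-1}(u), \pi_0(v), \pi_{-1}(v)$ lies between $\pi_{-1}(w)$ and $\pi_0(x)$; hence, invoking $\dim(\pos) \le 1$ with these common lower and upper bounds, these four elements are pairwise comparable and so form a chain in $\pos$.

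With this chain in hand I would finish by contradiction, using the characterization that $z \le z'$ in $\R(\pos)$ iff $\pi_0(z) \le \pi_{-1}(z')$, or $z$ and $z'$ lie on a common edge with $T(z) \le T(z')$. If both $u \not\le v$ and $v \not\le u$, then, because the relevant projections are comparable, one obtains the strict inequalities $\pi_{-1}(v) < \pi_0(u)$ and $\pi_{-1}(u) < \pi_0(v)$, together with failure of the same-edge condition in each direction. The main obstacle, and the only place requiring genuine work, is extracting a contradiction from this configuration by a case analysis on whether $u$ and $v$ lie in $\pos$ or in $\mathcal{I}(\pos)$: in each case the two strict inequalities wedge an element of the chain strictly between a covering pair. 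For instance, if $u \in \pos$ and $v = (c,d,r)$, the inequalities read $d < u < c$, contradicting that $c$ covers $d$. The delicate sub-case is when both $u = (a,b,s)$ and $v = (c,d,r)$ lie on edges: comparing the bottom vertices $b$ and $d$ forces $b = d$, since otherwise $b < d < a$ or $d < b < c$ violates a covering relation, and then comparing the tops $a$ and $c$ forces $a = c$, placing $u$ and $v$ on one edge; there $T(u)$ and $T(v)$ are comparable real numbers, contradicting the failure of the same-edge condition and completing the argument.
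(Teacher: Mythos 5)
Your proposal is correct and takes essentially the same route as the paper: project the relations to $\pos$ along $\pi_0$ and $\pi_{-1}$, use the dimension-at-most-$1$ hypothesis to make the projections comparable, and then case-analyze on whether $u$ and $v$ lie in $\pos$ or in $\mathcal{I}(\pos)$, exploiting the covering relations. The differences are organizational rather than substantive: you argue by contradiction and obtain all four projections as a chain in one stroke from the common bounds $\pi_{-1}(w)$ and $\pi_0(x)$, whereas the paper argues directly (exhibiting which of $u\le v$ or $v\le u$ holds) and invokes the dimension hypothesis a second time inside the case where both $u$ and $v$ lie in $\mathcal{I}(\pos)$; you also make explicit the dimension-$0$ versus dimension-$1$ bookkeeping that the paper treats implicitly.
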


\begin{proof}
If $\pos$ is discrete, then $\mathcal{I}(\pos)$ is empty and $\pos=\R(\pos)$. 
Let $\dim(\pos)=1$. 
To show $\dim(\R(\pos))=1$, 
consider the relations $w\leq u \leq z$ and $w\leq v \leq  z$ in $\R(\pos)$. 
We need to prove that $u$ and $v$ are comparable. 
By applying the poset functors  $\pi_0$ and $\pi_{-1}$ to the above relations and using the fact that $\dim(\pos)=1$, we obtain that $\pi_0(v)$ is comparable to  $\pi_0(u)$ and $\pi_{-1}(v)$ is comparable to $\pi_{-1}(u)$. 
Assume $\pi_0(v)\leq \pi_0(u)$. We now consider several cases:
\begin{itemize}
    \item   $u$ is in $\pos$. Then
    the relation  $\pi_0(v)\leq \pi_0(u)$ implies $v\leq u$.
    \item $u$ is in $\mathcal{I}(\pos)$  and $v$ is in $\pos$.
    If $\pi_0(v)= \pi_0(u)$, then $u\leq v$. 
    If $\pi_0(v)< \pi_0(u)$, then $\pi_{-1}(u)\geq \pi_{-1}(v)=\pi_0(v)$, and hence $u\geq v$.
    \item both $u$ and $v$ are in  $\mathcal{I}(\pos)$, i.e. $\pi_{0}(u)$ covers $\pi_{-1}(u)$ and $\pi_{0}(v)$ covers $\pi_{-1}(v)$.  In  this case the following relations have to hold:
    \[
\begin{tikzcd}[row sep=1pt, column sep = 5pt]
 & \pi_{-1}(u)\ar[draw=none, sloped ]{dr}[description]{\leq}\\
 \pi_{-1}(v)\ar[draw=none, sloped]{dr}[description]{\leq} \ar[draw=none, sloped]{ur}[description]{\leq} & & \pi_{0}(u)\ar[draw=none]{r}[description]{\leq} & \pi_0(z)\\
 & \pi_{0}(v)\ar[draw=none, sloped]{ur}[description]{\leq}
\end{tikzcd}
\]
Using again $\dim(\pos)=1$, we get 
that $\pi_{0}(v)$ and $\pi_{-1}(u)$ are comparable.
Consequently, either 
$\pi_{-1}(v)\leq \pi_{-1}(u)\leq \pi_{0}(v)\leq \pi_{0}(u)$
or $\pi_{-1}(v)\leq \pi_{0}(v)\leq \pi_{-1}(u)\leq \pi_{0}(u)$.
In the latter case, $v\le u$ in $\R(\pos)$.
In the former, there are different possibilities. 
If all the inequalities are equalities, then $u=v$ in $\pos$, but this is not the case by assumption. 
If either the first or the last inequality is strict, then $\pi_{-1}(u)=\pi_0(v)$, which implies that $v\le u$. 
If the middle inequality is strict, we have $\pi_{-1}(u)=\pi_{-1}(v)<\pi_0(u)=\pi_0(v)$, implying that $u$ and $v$ are comparable. \qedhere
\end{itemize}
\end{proof}

For a  subset $V\subset (-1,0)$, denote by $\R(\pos, V)\subset \R(\pos)$ the full subposet given by the disjoint union of $\pos$ and 
$\mathcal{I}(\pos,V)\coloneqq\coprod_{x\in \pos}\coprod_{y\in\mathcal{P}(x) } V$.
Thus, 
\begin{align*}
    \mathcal{I}(\pos,V) &= \{(x,y,t)\in \mathcal{I}(\pos)\ |\ t\in V\}, \\
    \R(\pos, V) &= \{z\in \R(\pos)\ |\ T(z)=0\text{ or } T(z)\in V\}.
\end{align*}

\begin{figure}[ht]
\centering
\adjustbox{scale=0.6}{
\begin{tikzcd}[column sep=2em,row sep=.8ex,ampersand replacement=\&]
\& \bullet \& \& \& [-2.4em] \& [-1.4em]\& [-2.4em]\bullet \& [-2.4em] \& [-1.4em]\& [-2.4em] \& \& \bullet \& \\
\& \& \& \& \& \bullet \arrow[ur,shorten >=-2pt,shorten <=-2pt] \& \& \bullet \arrow[ul,shorten >=-2pt,shorten <=-2pt] \& \& \& \& \& \\
\& \& \textcolor{white}{i}\arrow[r, hook] \& \textcolor{white}{i} \& \& \& \& \& \&\textcolor{white}{i} \arrow[r,hook] \& \textcolor{white}{i}\& \& \\
\& \& \& \& \bullet \arrow[uur] \& \& \& \& \bullet \arrow[uul] \& \& \& \& \\
\bullet \arrow[uuuur] \& \& \bullet \arrow[uuuul] \& \bullet \arrow[ur,shorten >=-2pt,shorten <=-2pt]\& \& \& \& \& \& \bullet \arrow[ul,shorten >=-2pt,shorten <=-2pt]\& \bullet  \arrow[uuuur,dash, thick, shorten >=-5pt,shorten <=-5pt] \& \& \bullet \arrow[uuuul,dash, thick,shorten <=-5pt,shorten >=-5pt]
\end{tikzcd}
}
\caption{From left to right: a poset $\pos$ of dimension 1, $\R(\pos, V)$ with $V=\{\frac{1}{4}, \frac{3}{4}\}$, and the realization $\R(\pos)$. In each poset, arrows point to greater elements.}
\label{fig_realisation}
\end{figure}

Since, by \cref{prop_real_dim1}, $\R(\pos)$ has dimension at most 1, $\R(\pos, V)$ has also dimension at most 1. 
Let $\alpha_V\colon \R(\pos, V)\subset \R(\pos)$ be the full inclusion. 
If $V$ is finite, define  $\alpha_V^!\colon \R(\pos)\to \R(\pos, V)$ to be the function given by the formula:
\[
\alpha_V^!(z) = \begin{cases}
    z & \text{ if } z\in\pos\\
    y & \text{ if }  z=(x,y,t)\in\mathcal{I}(\pos)\text{ and } \{v\in V\ |\ v\leq t\}=\emptyset\\
    (x,y, \text{max}\{v\in V\ |\ v\leq t\})  & \text{ if } z=(x,y,t)\in\mathcal{I}(\pos)
    \text{ and } \{v\in V\ |\ v\leq t\}\not=\emptyset
\end{cases}
\]
Note that finiteness of $V$ is needed in the definition of  $\alpha_V^!$ since it involves taking \medskip maxima.

The composition $\alpha_V^!\alpha_V\colon \R(\pos, V)\to \R(\pos, V)$ is the identity and, for any $z$ in $\R(\pos)$,
we have $\alpha_V \alpha_V^!(z)\leq z$.
This means  that $(\alpha_V^!)_{\ast}\colon \R(\pos)_{\ast}\to \R(\pos, V)_{\ast}$ is the transfer of $\alpha_V$.

\begin{proposition}\label{afdfhsfg}
    Let $\C$ be a category closed under finite colimits and $\pos$ be a finite poset of dimension at most $1$. 
    Then, for every functor $X$ in  $\tame(\R(\pos), \C)$, there
    is a finite $V\subset (-1,0)$ for which the inclusion
    $\R(\pos, V)\subset \R(\pos)$ discretizes $X$. 
    In particular, all functors in $\tame(\R(\pos), \C)$ can be discretized by functors admitting a transfer. 
\end{proposition}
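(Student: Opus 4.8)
The plan is to reduce the statement to a finiteness observation about $\R(\pos)$ and then invoke the transfer constructed just before the statement. Since $X$ is in $\tame(\R(\pos),\C)$, by definition it is discretized by some poset functor $\alpha\colon\D\to\R(\pos)$ with $\D$ finite. By \cref{prop_full_lke}.\textit{1}, $X$ is then discretized by any finite full subposet of $\R(\pos)$ containing the image $\alpha(\D)$. So it suffices to produce a finite $V\subset(-1,0)$ for which $\R(\pos,V)$ is such a subposet, i.e.\ for which $\alpha(\D)\subset\R(\pos,V)$.

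To build $V$, I would collect the interval coordinates occurring among the finitely many elements of $\alpha(\D)$, setting $V\coloneqq\{T(z)\mid z\in\alpha(\D)\cap\mathcal{I}(\pos)\}$. As $\alpha(\D)$ is finite, $V$ is a finite subset of $(-1,0)$, and since $\pos$ is finite, $\R(\pos,V)$ is a finite full subposet of $\R(\pos)$. Each element of $\alpha(\D)$ either lies in $\pos\subset\R(\pos,V)$, or has the form $(x,y,t)$ with $t\in V$ and hence lies in $\mathcal{I}(\pos,V)\subset\R(\pos,V)$; thus $\alpha(\D)\subset\R(\pos,V)$, and \cref{prop_full_lke}.\textit{1} gives that $\R(\pos,V)$ discretizes $X$. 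This proves the first assertion.

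For the final clause, I would recall that the inclusion $\alpha_V\colon\R(\pos,V)\subset\R(\pos)$ admits a transfer: the function $\alpha_V^!$ defined just before the statement satisfies $\alpha_V^!\alpha_V=\id$ and $\alpha_V\alpha_V^!(z)\le z$ for every $z$ in $\R(\pos)$, so by the criterion of \cref{ineq_transfer} the pointed extension $(\alpha_V^!)_\ast$ is the transfer of $\alpha_V$. Therefore $X$ is discretized by $\alpha_V$, a functor admitting a transfer, which is exactly the claim.

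I do not expect a genuine obstacle here: the argument is essentially bookkeeping, since the real work has already been front-loaded into \cref{prop_full_lke} and the transfer construction. The one point needing care is the observation that a finite subset of $\R(\pos)$ can meet only finitely many interval fibers, so that its coordinates form a finite set $V$ — and this finiteness is precisely what makes the formula for $\alpha_V^!$ (which takes maxima over $V$) well defined. Everything else is an immediate application of the results cited above.
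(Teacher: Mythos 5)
Your proposal is correct and follows essentially the same route as the paper: both pass to a finite full subposet containing the image of the discretizing functor (via \cref{prop_full_lke}.\textit{1}), take $V$ to be the set of nonzero $T$-coordinates of its elements (your $V$ equals the paper's $T(\D)\setminus\{0\}$), note that finiteness of $\pos$ and $V$ makes $\R(\pos,V)$ finite, and invoke the transfer of $\alpha_V$ constructed before the statement. The only difference is that you spell out the reduction from an arbitrary discretizing functor $\alpha\colon\D\to\R(\pos)$ to a full subposet inclusion, which the paper leaves implicit.
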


\begin{proof}
Let $X$ in $\tame(\R(\pos), \C)$ be discretized by the inclusion $\D\subset \R(\pos)$, with $\D$ finite. 
Let $V = T(\D)\setminus\{0\}$. 
Then $\D\subset \R(\pos, V)$. 
The assumption that $\pos$ is finite implies  $\R(\pos, V)$ is finite and consequently $X$ is discretized by 
$\alpha_V\colon \R(\pos, V)\subset \R(\pos)$ (see \cref{prop_full_lke}.\textit{1}).  
Since $\alpha_V$ admits transfer, the second statement also holds.
\end{proof}

\subsection{Realizations of some infinite posets of dimension at most 1}
In this section, we are going to extend \cref{afdfhsfg} to certain infinite posets. 
\medskip 

A poset $\pos$ is of \define{finite type} if $\pos\leq x$ is finite for every $x$ in $\pos$.
For $U\subset \pos$, let us denote by $\text{\rm suplim}(U)$ the subset of $\pos$ consisting of all the elements in $\pos$ that bound from above $U$ and are minimal with that respect.
A subset $\D\subset \pos$ is called \define{closed} if $\text{\rm suplim}(U)\subset \D$  for every non-empty subset $U\subset \D$. 
The intersection of any family of closed subsets of $\pos$ is closed. 
Consequently, for any $\D\subset \pos$, there is a unique minimal closed subset $\overline{\D}\subset\pos$ (the intersection of all the closed subsets containing $\D$) such that 
$\D\subset \overline{\D}$. 
For posets of dimension at most $1$, elements of the set $\overline{\D}$ can be described explicitly:

\begin{proposition}
    If $\pos$ is a poset of dimension at most $1$, then, for every subset $\D\subset \pos$:
   \[
   \overline{\D}=\bigcup_{\substack{U\subset \D \\ U \text{ is  finite}}}\text{\rm suplim}(U)
   \]
\end{proposition}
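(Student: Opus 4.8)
The plan is to prove the two inclusions separately. Write $S\coloneqq\bigcup_{U}\text{\rm suplim}(U)$ for the right-hand side, where $U$ ranges over the \emph{non-empty} finite subsets of $\D$; this matches the definition of a closed set, which only constrains non-empty subsets (and indeed $\text{\rm suplim}(\varnothing)$, being the set of minimal elements of $\pos$, has no reason to lie in $\overline{\D}$). First I would record the two easy facts $\D\subseteq S$ and $S\subseteq\overline{\D}$. For $\D\subseteq S$: for each $d\in\D$ the singleton $\{d\}$ has $d$ as its unique minimal upper bound, so $d\in\text{\rm suplim}(\{d\})\subseteq S$. For $S\subseteq\overline{\D}$: since $\overline{\D}$ is closed and contains $\D$, every non-empty finite $U\subseteq\D\subseteq\overline{\D}$ satisfies $\text{\rm suplim}(U)\subseteq\overline{\D}$ by the very definition of closedness.

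The substance is the reverse inclusion $\overline{\D}\subseteq S$, which I would get by showing that $S$ is itself closed: since $\overline{\D}$ is the smallest closed set containing $\D$ and we already have $\D\subseteq S$, this gives $\overline{\D}\subseteq S$. So fix a non-empty $V\subseteq S$ and a minimal upper bound $w\in\text{\rm suplim}(V)$; the goal is to exhibit a non-empty finite $W\subseteq\D$ with $w\in\text{\rm suplim}(W)$. Each $v\in V$ lies in $\text{\rm suplim}(U_v)$ for some non-empty finite $U_v\subseteq\D$, and then $U_v\subseteq(\pos\leq v)\subseteq(\pos\leq w)$. This is the point where finiteness is used: because $\pos\leq w$ is finite, the set $V$ is finite, and therefore $W\coloneqq\bigcup_{v\in V}U_v$ is a non-empty finite subset of $\D$. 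Clearly $w$ is an upper bound of $W$, since $u\leq v\leq w$ for every $u\in U_v$.

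It remains to check that $w$ is a \emph{minimal} upper bound of $W$, and this is exactly where the dimension hypothesis is indispensable. Let $w'\leq w$ be any upper bound of $W$; I claim $w'$ is automatically an upper bound of $V$, which forces $w'=w$ by minimality of $w$ over $V$. Fix $v\in V$ and pick any $u\in U_v$ (possible as $U_v\neq\varnothing$). Then $u\leq v\leq w$ and $u\leq w'\leq w$, so $\dim(\pos)\leq 1$ makes $v$ and $w'$ comparable. If $v\leq w'$ we are done; if instead $w'\leq v$, then $w'$ is an upper bound of $U_v$ lying below the minimal upper bound $v$, so minimality of $v$ forces $w'=v$ and again $v\leq w'$. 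Hence $v\leq w'$ for every $v\in V$, proving the claim; thus $w\in\text{\rm suplim}(W)\subseteq S$, and $S$ is closed.

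The main obstacle is precisely this last minimality verification together with its reduction to a finite witness. The reduction rests on finite type to keep $V$, and hence $W$, finite — and this cannot be dispensed with: for $\pos=\mathbb{R}$ and $\D=[0,1)$ one has $\overline{\D}=[0,1]$ while every minimal upper bound of a finite subset of $[0,1)$ lies in $[0,1)$, so $S=[0,1)\subsetneq\overline{\D}$. The comparability of $v$ and $w'$ obtained from the down-set structure around $w$ is the genuinely load-bearing use of the dimension-at-most-one condition, and I would present it as the heart of the argument.
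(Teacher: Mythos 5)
Your proof follows the same route as the paper's: sandwich $\D\subseteq S\subseteq\overline{\D}$, then show that $S$ itself is closed, so that minimality of $\overline{\D}$ forces $\overline{\D}\subseteq S$. The heart of your argument --- given an upper bound $w'\leq w$ of $W=\bigcup_{v\in V}U_v$, use $u\leq v\leq w$ and $u\leq w'\leq w$ to get comparability of $v$ and $w'$ from $\dim(\pos)\leq 1$, then rule out $w'<v$ by minimality of $v$ over $U_v$ --- is exactly the paper's key step, which the paper runs as a proof by contradiction: an upper bound $z<x$ of $\bigcup_{t\in T}U_t$ is shown to be an upper bound of $T$, contradicting $x\in\text{\rm suplim}(T)$. (Your restriction of the union to non-empty $U$ is also the right reading; the paper's argument implicitly needs each $U_t$ non-empty.)

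The one genuine divergence is how an arbitrary non-empty $V\subseteq S$ is handled in the closedness check. You reduce to a finite witness by invoking finiteness of $\pos\leq w$, i.e.\ the finite type hypothesis --- which is \emph{not} among the hypotheses of this proposition (it is defined in the same section, but assumed only in later statements such as \cref{afdhfgsfdf} and \cref{adfgsdfjdghn}). Read literally, your argument therefore proves a different statement. However, your counterexample is valid: for $\pos=\mathbb{R}$, $\D=[0,1)$ one has $\overline{\D}=[0,1]$ while the right-hand side is $[0,1)$, so with ``closed'' quantified over \emph{all} non-empty subsets, as the paper defines it, the proposition is false without finite type. The paper's own proof sidesteps this only by checking closure of its set $D$ for \emph{finite} subsets $T\subseteq D$; under the stated definition of closedness that is a gap in the paper --- it establishes only closure under $\text{\rm suplim}$ of finite subsets. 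So the defect you located lies in the paper's formulation rather than in your reasoning: either finite type must be added (your fix --- and then the paper's proof also becomes complete, since any subset of $S$ admitting a minimal upper bound $w$ lies in $\pos\leq w$ and is automatically finite, making the two arguments coincide), or ``closed'' must be weakened to mean closed under $\text{\rm suplim}$ of finite subsets, which is what the paper's proof actually shows. Since every downstream use of this proposition in the paper assumes finite type, your version is the one that is actually needed.
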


\begin{proof}
    Let $D\coloneqq\bigcup_{\substack{U\subset \D \\ U \text{ is  finite}}}\text{\rm suplim}(U)$. 
    Clearly $\D\subset D\subset \overline{\D}$. 
    To show the proposition we need to prove therefore that $D$ is closed. 
    Let $T\subset D$ be a finite subset. 
    For each $t$ in $T$, let $U_t\subset D$ be such that $t$ is in
    $\text{\rm suplim}(U_t)$. 
    We claim $\text{\rm suplim}(T)\subset \text{\rm suplim}(\bigcup_{t\in T}U_t)$. 
    Assume to contrary that there is $x$ in $\text{\rm suplim}(T)\setminus \text{\rm suplim}(\bigcup_{t\in T}U_t)$.
    Since $y \leq x$ for every $y$ in $\bigcup_{t\in T}U_t$, there has to be $z$ such that $y\leq z<x$ for every $y$ in $\bigcup_{t\in T}U_t$. 
    If $y$ is in $U_t$, then the relations $y\leq t\leq x$ and the fact that $\pos$ is of dimension at most $1$ imply that $z$ is comparable to every $t$ in $T$. 
    The relation $z<t$ would contradict the fact that $t$ is in $\text{\rm suplim}(U_t)$. 
    We thus have $t\leq z$ for all $t$ in $T$. 
    This however contradicts the fact that $x$ is in $\text{\rm suplim}(T)$.
\end{proof}

This proposition gives:

\begin{corollary}\label{adfghfjdghkjfh}
     Let $\pos$ be a poset of dimension at most $1$. 
     Then the following statements are equivalent:
     \begin{enumerate}
         \item For all finite  $\D\subset\pos $, the set  $\overline{\D}$ is finite.
         \item For all finite  $\D\subset\pos $, the set $ \text{\rm suplim}(\D)$ is finite.
     \end{enumerate}
\end{corollary}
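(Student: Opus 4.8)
The plan is to derive both implications directly from the explicit description
$\overline{\D}=\bigcup_{U\subset \D,\ U\text{ finite}}\text{\rm suplim}(U)$
supplied by the preceding proposition for posets of dimension at most $1$. This formula is the only substantive ingredient; once it is in hand, each implication collapses to an elementary finiteness bookkeeping argument about finite sets and finite unions. I therefore do not expect any real obstacle in the corollary itself — the genuine content sits in the proposition that precedes it, and the corollary is essentially a repackaging of that formula.

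First I would show that statement~\textit{1} implies statement~\textit{2}. Fix a finite subset $\D\subset\pos$. Choosing $U=\D$ in the displayed union exhibits $\text{\rm suplim}(\D)$ as one of the terms contributing to $\overline{\D}$, whence $\text{\rm suplim}(\D)\subset\overline{\D}$. By hypothesis $\overline{\D}$ is finite, so its subset $\text{\rm suplim}(\D)$ is finite as well, which is exactly statement~\textit{2}.

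For the converse, I would assume statement~\textit{2} and again fix a finite $\D\subset\pos$. Since $\D$ is finite, it possesses only finitely many subsets $U$, each of which is automatically finite. The proposition then presents $\overline{\D}$ as a \emph{finite} union $\bigcup_{U\subset\D}\text{\rm suplim}(U)$, and statement~\textit{2} guarantees that every set $\text{\rm suplim}(U)$ occurring in this union is finite (note that even the degenerate case $U=\varnothing$ is covered, since $\varnothing$ is a finite subset of $\pos$). As a finite union of finite sets is finite, $\overline{\D}$ is finite, establishing statement~\textit{1} and completing the equivalence.
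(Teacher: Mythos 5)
Your proof is correct and is essentially the paper's own argument: the paper states this corollary without a separate proof, as an immediate consequence of the preceding proposition, and your two implications (taking $U=\D$ to get $\text{\rm suplim}(\D)\subset\overline{\D}$ for one direction, and writing $\overline{\D}$ as a finite union of finite sets $\text{\rm suplim}(U)$ for the other) are precisely that consequence spelled out. One cosmetic remark: since the paper's notion of closedness quantifies only over \emph{non-empty} subsets, the term $U=\varnothing$ should be understood as excluded from the union in the proposition (otherwise $\text{\rm suplim}(\varnothing)$ would consist of all minimal elements of $\pos$), so your parenthetical about the degenerate case is unnecessary rather than load-bearing.
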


The reason we are interested in closed subsets is given by the following proposition.

\begin{proposition}\label{adgsgfjdgh}
    Let $\pos$ be a poset. 
    Assume that $\D\subset \pos$ is a closed full subposet satisfying the following condition: for every $x$ in $\pos$ for which $\D\leq x$ is non-empty, there is a unique $s$ in $\text{\rm suplim}(\D\leq x)$ such that $s\leq x$.
    Then $\D\subset \pos$ admits a transfer.
\end{proposition}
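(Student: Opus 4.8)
The plan is to construct explicitly the right adjoint $\alpha^!\colon \pos_\ast\to\D_\ast$ to $\alpha_\ast$, where $\alpha\colon \D\subset\pos$ denotes the inclusion, and to verify it is the transfer via the criterion of \cref{ineq_transfer}. Since $\alpha$ is a full subposet inclusion, the set $\alpha\le x$ coincides with $\D\le x=\{d\in\D\mid d\le x\}$, and \cref{sdgadfgh} already tells us that a transfer, if it exists, is forced to send each $x\in\pos$ with $\D\le x$ non-empty to the terminal object (that is, the maximum) of $\D\le x$. Hence the entire content of the statement is the assertion that the hypotheses guarantee that $\D\le x$ possesses a maximum whenever it is non-empty.

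The key step is to produce this maximum. Fix $x\in\pos$ with $\D\le x$ non-empty and set $U\coloneqq \D\le x$, a non-empty subset of $\D$. Because $\D$ is closed, $\text{\rm suplim}(U)\subset\D$. By hypothesis there is a unique $s\in\text{\rm suplim}(U)$ with $s\le x$. I claim that $s=\max U$. Indeed, $s\in\text{\rm suplim}(U)\subset\D$ together with $s\le x$ gives $s\in\D\le x=U$; on the other hand $s$, being an element of $\text{\rm suplim}(U)$, bounds $U$ from above. Thus $s$ is an element of $U$ lying above every element of $U$, i.e.\ the maximum of $U$ (and the uniqueness asserted in the hypothesis is then simply the uniqueness of a maximum).

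It remains to assemble this into a transfer. I would define $\alpha^!\colon\pos_\ast\to\D_\ast$ by $\alpha^!(-\infty)=-\infty$, by $\alpha^!(x)=-\infty$ when $\D\le x$ is empty, and by $\alpha^!(x)=\max(\D\le x)$ otherwise, which is well defined by the previous paragraph. Monotonicity follows because $q\le q'$ yields $\D\le q\subset\D\le q'$, so the corresponding maxima compare correctly, the cases involving $-\infty$ being immediate. Finally I would check the two inequalities of \cref{ineq_transfer}: for $d\in\D$ one has $d\in\D\le d$, whence $d\le\max(\D\le d)=\alpha^!\alpha_\ast(d)$; and for $q\in\pos$ with $\D\le q$ non-empty, $\alpha_\ast\alpha^!(q)=\max(\D\le q)\le q$ by construction, with the empty and $-\infty$ cases trivial. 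By \cref{ineq_transfer}, $\alpha^!$ is right adjoint to $\alpha_\ast$, so $\alpha\colon\D\subset\pos$ admits a transfer.

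The only genuine obstacle is the identification carried out in the second paragraph: translating the condition on $\text{\rm suplim}(\D\le x)$ into the existence of an honest maximum of $\D\le x$. Closedness is exactly what places the minimal upper bound $s$ back inside $\D$, hence inside $\D\le x$, and the clause $s\le x$ is exactly what forces $s$ below $x$; together they upgrade a \emph{minimal upper bound of $\D\le x$} into the \emph{maximum of $\D\le x$}. Once this terminal object is in hand, the rest is the formal verification already packaged in \cref{ineq_transfer} and \cref{sdgadfgh}.
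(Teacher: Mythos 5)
Your proposal is correct and takes essentially the same route as the paper: both construct the identical candidate $\alpha^!$ (your $\max(\D\le x)$ is exactly the paper's unique element of $\text{\rm suplim}(\D\le x)$ lying below $x$), verify it is a poset functor, and conclude via the criterion of \cref{ineq_transfer}. Your preliminary observation that closedness plus the uniqueness hypothesis upgrades this element to an honest maximum of $\D\le x$ is a clean way of making explicit what the paper uses implicitly, but it does not change the argument.
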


\begin{proof}
Define a function $\alpha^!\colon \pos_{\ast}\to \D_\ast$ as follows.
Set $\alpha^!(-\infty)=-\infty$.
Let $x$ be in $\pos$. 
If $\D\leq x$ is empty, define  $\alpha^!(x)\coloneqq-\infty$.
If $\D\leq x$ is non-empty, define  $\alpha^!(x)$ to be the unique element in $\text{\rm suplim}(\D\leq x)\subset \D$ for which
$\alpha^!(x)\leq x$. We claim that $\alpha^!$ is a poset functor. Let $x\leq y$ in $\pos$.
Assume $\D\leq x$ is non-empty. 
Since  $(\D\leq x)\subset (\D\leq y)$,  then
$\D\leq y $ is also non-empty. 
The relations $\alpha^!(x)\leq x\leq y$ imply 
$\alpha^!(x)$ belongs to $\D\leq y$, and consequently $\alpha^!(x)\leq\alpha^!(y)$.  
If $\D\leq x$ is empty, then  $\alpha^!(x)=-\infty \leq\alpha^!(y)$ also holds. 

Note that  $\alpha^!(x)=x$ for all $x$ in $\D$ and $\alpha^!(x)\leq x$ for all $x$ in $\pos$.
The functor $\alpha^!$ satisfies, therefore, the assumptions of \cref{ineq_transfer} and hence it is the transfer of $\D\subset \pos$.
\end{proof}

\begin{corollary}\label{afdhfgsfdf}
Assume $\pos$ is a poset of dimension at most $1$ and of finite type. 
Then every closed full subposet inclusion $\D\subset \pos$  admits a transfer.
\end{corollary}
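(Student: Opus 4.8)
The plan is to reduce the whole statement to \cref{adgsgfjdgh} and then simply verify its hypotheses. That proposition already takes as input a \emph{closed} full subposet $\D\subset\pos$ satisfying the condition that, for every $x$ in $\pos$ with $\D\leq x$ non-empty, there is a unique $s$ in $\text{\rm suplim}(\D\leq x)$ with $s\leq x$; from this it produces the transfer. Since closedness of $\D$ is assumed here, the only thing left to establish is this existence-and-uniqueness condition, using the two standing hypotheses: finite type will give existence, and $\dim(\pos)\leq 1$ will give uniqueness.

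For existence, fix $x$ in $\pos$ with $\D\leq x$ non-empty and consider the set $B$ of all upper bounds of $\D\leq x$ that are themselves $\leq x$. This set is non-empty, since $x$ is such an upper bound, and it is contained in $\pos\leq x$, which is finite by the finite-type assumption; hence $B$ has a minimal element $s$. The step I expect to require the most care is promoting the minimality of $s$ \emph{inside} $B$ to minimality among \emph{all} upper bounds of $\D\leq x$ in $\pos$, which is exactly what membership in $\text{\rm suplim}(\D\leq x)$ demands. This works because any upper bound $s'<s$ satisfies $s'<s\leq x$, so $s'\leq x$ and therefore $s'\in B$, contradicting the minimality of $s$ in $B$. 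Thus $s$ is a minimal upper bound, i.e. $s\in\text{\rm suplim}(\D\leq x)$, and $s\leq x$.

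For uniqueness, suppose $s_1,s_2\in\text{\rm suplim}(\D\leq x)$ both satisfy $s_i\leq x$. Choosing any $w$ in the non-empty set $\D\leq x$, the relations $w\leq s_1\leq x$ and $w\leq s_2\leq x$ together with $\dim(\pos)\leq 1$ force $s_1$ and $s_2$ to be comparable; say $s_1\leq s_2$. Since $s_2$ is a minimal upper bound of $\D\leq x$ and $s_1$ is an upper bound lying below it, we conclude $s_1=s_2$. With both existence and uniqueness verified, the hypotheses of \cref{adgsgfjdgh} hold, and the corollary follows at once. (Closedness of $\D$, which is assumed, also ensures $\text{\rm suplim}(\D\leq x)\subset\D$, so the element $s$ genuinely lies in $\D$, as needed for the transfer produced by \cref{adgsgfjdgh} to take values in $\D_\ast$.)
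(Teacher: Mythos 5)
Your proof is correct and takes essentially the same route as the paper's: both reduce the statement to \cref{adgsgfjdgh}, use the finite-type hypothesis ($\pos\leq x$ finite) to produce a minimal upper bound $s$ of $\D\leq x$ with $s\leq x$, and use the dimension-at-most-$1$ hypothesis to show that any two elements of $\text{\rm suplim}(\D\leq x)$ lying below $x$ are comparable and hence, by minimality, equal. Your auxiliary set $B$ merely spells out a step the paper leaves implicit, namely why a minimal element of the bounded-by-$x$ upper bounds is minimal among all upper bounds.
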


\begin{proof}
The proposition is clear if $\D$ is empty. 
Let $\D\subset\pos$ be a non-empty closed full subposet.
We are going to show that it satisfies the assumptions of \cref{adgsgfjdgh}. 
Let $x$ in $\pos$ be such that $\D\leq x$ is non-empty. 
Recall that $\text{\rm suplim}(\D\leq x)$ consists of the minimal elements  $y$ in $\pos$  that bound from above $\D\leq x$.  
Since  $x$ bounds from above $\D\leq x$, and  $\pos\leq x$ is finite, there is $s$ in $\text{\rm suplim}(\D\leq x)$ for which $s\leq x$.
 
Assume that for $s$ and $s'$ in $\text{\rm suplim}(\D\leq x)$, both the relations $s\leq x$ and $s'\leq x$ hold.
This means that there is $d$ in $\D$ such that $d\leq s\leq x$ and $d\leq s'\leq x$. 
The dimension assumption then implies that $s$ and $s'$ are comparable. 
Minimality gives the equality $s=s'$. 
\end{proof}

\begin{corollary}\label{sdgfghjfgjkjkl}
Let $\C$ be a category closed under finite colimits.
Assume $\pos$ is a poset of dimension at most $1$ and of finite type such that, for every finite $\D\subset \pos$, the closed set $\overline{\D}$ is also finite. 
Then all functors in $\tame(\pos, \C)$ can be discretized by functors admitting a transfer. 
\end{corollary}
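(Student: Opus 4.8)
The plan is to combine three ingredients that are already in place: the freedom to enlarge a discretizing subposet (\cref{prop_full_lke}.\textit{1}), the fact that closed full subposets of such a $\pos$ admit transfers (\cref{afdhfgsfdf}), and the standing hypothesis that closures of finite sets remain finite. The whole point is that an arbitrary finite discretizing subposet need not admit a transfer, but its closure does, and the hypothesis keeps that closure finite.

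First I would fix an arbitrary functor $X$ in $\tame(\pos,\C)$. By definition of tameness it is discretized by some poset functor $\alpha\colon\D\to\pos$ with $\D$ finite. Passing to the finite full subposet $\alpha(\D)\subset\pos$ and invoking \cref{prop_full_lke}.\textit{1}, I may assume that $X$ is already discretized by a finite full subposet inclusion, which I again call $\D\subset\pos$.

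The key step is then to replace $\D$ by its closure $\overline{\D}$. By the hypothesis on $\pos$, the set $\overline{\D}$ is finite. Since $\D\subset\overline{\D}$ and $\overline{\D}$ is a finite full subposet of $\pos$, \cref{prop_full_lke}.\textit{1} guarantees that $X$ is still discretized by the inclusion $\overline{\D}\subset\pos$. Finally, $\overline{\D}$ is by construction closed, and $\pos$ is of dimension at most $1$ and of finite type, so \cref{afdhfgsfdf} applies and the inclusion $\overline{\D}\subset\pos$ admits a transfer. This exhibits $X$ as discretized by a functor admitting a transfer, which is exactly the claim.

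The main obstacle, or rather the reason the extra hypothesis is indispensable, is precisely the finiteness of $\overline{\D}$. The transfer in \cref{afdhfgsfdf} is produced only for \emph{closed} full subposets, whereas an arbitrary finite discretization need not be closed; the natural remedy is to pass to the closure, but without assuming that closures of finite sets stay finite, $\overline{\D}$ could be infinite and the reduction to \cref{afdhfgsfdf} would collapse. Everything else in the argument is a routine bookkeeping of which earlier result supplies which property.
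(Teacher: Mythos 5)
Your proof is correct and follows essentially the same route as the paper's (one-line) proof: discretize $X$ by a finite subposet, pass to its closure $\overline{\D}$, which is finite by hypothesis and still discretizes $X$ by \cref{prop_full_lke}.\textit{1}, and then apply \cref{afdhfgsfdf} to get the transfer. You merely spell out the reduction to a full subposet inclusion, which the paper leaves implicit.
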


\begin{proof}
If $X$ in $\tame(\pos, \C)$ is discretized by $\D\subset \pos$, then it is also discretized by $\overline{\D}\subset \pos$, which, according to~\cref{afdhfgsfdf}, admits a transfer. 
\end{proof}

Let $\pos$ be a poset of dimension at most $1$. Choose $\D\subset \pos$ and a  subset  $V\subset (-1,0)$.
Define $\R(\pos, \D, V)\subset \R(\pos)$ to be the full subposet given by the disjoint union of $\D\subset \pos$
and $\mathcal{I}(\pos, \D, V)\coloneqq\coprod_{x\in \D}\coprod_{y\in\mathcal{P}(x) } V$. 
Thus: 
\begin{align*}
\mathcal{I}(\pos, \D, V) &= \{(x,y,t)\in \mathcal{I}(\pos)\ |\ x\in \D\text{ and }t\in V\},
\\
\R(\pos,\D, V) &= \{z\in \R(\pos)\ |\ \pi_0(z)\in \D \text{ and either }T(z)=0\text{ or } T(z)\in V\}.
\end{align*}
Note that since $\R(\pos)$ is of dimension at most 1 (see \cref{prop_real_dim1}), then so is $\R(\pos, \D, V)$. 
Note also that $\R(\pos,\D, V)$ is a full subposet of $\R(\pos, V)$.

\begin{proposition}\label{sfgdgjgfjkhjfm}
    Let $\pos$ be a poset of dimension at most $1$  and $V\subset (-1,0)$ a subset. 
    If $\D\subset \pos$ is closed, then $\R(\pos,\D, V)$ is closed in $\R(\pos, V)$.
\end{proposition}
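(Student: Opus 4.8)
The plan is to unwind the definition of closedness for the ambient poset $\R(\pos, V)$: I must show that for every non-empty $U \subset \R(\pos, \D, V)$, each element $w$ of $\text{suplim}(U)$ computed in $\R(\pos, V)$, i.e.\ each minimal upper bound of $U$ in $\R(\pos, V)$, again lies in $\R(\pos, \D, V)$, which by definition means $\pi_0(w) \in \D$. The only hypothesis available is that $\D$ is closed in $\pos$, so the argument must eventually funnel into a $\text{suplim}$ computation inside $\pos$. The starting observation is that $\pi_0(U) \subset \D$ is non-empty, directly from the description of $\R(\pos, \D, V)$ as those $z$ with $\pi_0(z) \in \D$. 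I would then split into two cases according to whether $w$ is a vertex (an element of $\pos$) or an interior point $(a, b, s)$, using throughout the explicit order characterization $z \le z'$ iff $\pi_0(z) \le \pi_{-1}(z')$, or ($\pi_0(z) = \pi_0(z')$, $\pi_{-1}(z) = \pi_{-1}(z')$, and $T(z) \le T(z')$).

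In the vertex case $w \in \pos$, I would first read off from the characterization that for any $u$ one has $u \le w$ precisely when $\pi_0(u) \le w$ in $\pos$; hence $w$ is an upper bound of $\pi_0(U)$ in $\pos$. To see it is a minimal one, I would note that any $\pos$-upper bound $w'$ of $\pi_0(U)$ with $w' \le w$ is automatically an $\R(\pos, V)$-upper bound of $U$ lying below $w$, so minimality of $w$ forces $w' = w$. Thus $w \in \text{suplim}(\pi_0(U))$ in $\pos$, and closedness of $\D$ gives $w \in \D = \R(\pos, \D, V) \cap \pos$.

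In the interior case $w = (a, b, s)$, with $b \in \mathcal{P}(a)$ and $s \in V$, the characterization shows that each $u \in U$ satisfies either $\pi_0(u) \le b$ or $u = (a, b, t)$ with $t \le s$, the second option forcing $\pi_0(u) = a$. If even one $u$ falls into the second class, then $a = \pi_0(u) \in \D$ because $u \in \R(\pos, \D, V)$, so $\pi_0(w) = a \in \D$ and we are done. Otherwise every $u$ has $\pi_0(u) \le b$, so the vertex $b$, which always lies in $\R(\pos, V)$, is an upper bound of $U$ with $b < w$, contradicting the minimality of $w$; this rules the remaining subcase out entirely.

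The routine but delicate part is the order bookkeeping: verifying that the candidate smaller bounds $w'$ and $b$ genuinely sit in $\R(\pos, V)$ and are strictly below $w$, and reading off the correct disjunct of the order relation in each configuration. No separate appeal to $\dim(\pos) \le 1$ is needed beyond what is already encoded in the order characterization of $\R(\pos)$. The one point I would state carefully is that $\text{suplim}$ in the definition of ``closed'' is taken in the base poset $\pos$ when applied to $\D$, but in $\R(\pos, V)$ when applied to $\R(\pos, \D, V)$; the whole proof is precisely the translation between these two computations.
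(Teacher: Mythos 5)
Your proof is correct and takes essentially the same route as the paper's: project everything along $\pi_0$ and invoke the closedness of $\D$ in $\pos$. Your vertex/interior case analysis is in substance the justification of the paper's terse one-line claim that $\pi_0(s)$ is a \emph{minimal} upper bound of $\pi_0(U)$ (with your interior case even shortcutting, since there $\pi_0(w)$ lands in $\pi_0(U)\subset\D$ directly), so the two arguments coincide.
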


\begin{proof}
Consider a subset $U\subset \R(\pos,\D, V)$ and a minimal element $s$ in $\R(\pos, V)$ that bounds from above $U$. 
It follows that $\pi_0(s)$ bounds from above $\pi_0(S)$ in $\pos$. 
Moreover, $\pi_0(s)$ has to be a minimal such bounding element. 
Thus $\pi_0(s)$ belongs to $\D$, since $\D$ is closed, and consequently $s$ is in $\R(\pos,\D, V)$.
\end{proof}

\begin{corollary}
Assume $\pos$ is a poset of dimension at most $1$ and of finite type, $\D\subset \pos$ is closed, and $V\subset (-1,0)$ is a finite subset. 
Then the inclusion $\R(\pos,\D, V)\subset \R(\pos)$ admits a transfer.
\end{corollary}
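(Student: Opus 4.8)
The plan is to factor the inclusion $\R(\pos,\D,V)\subset\R(\pos)$ through $\R(\pos,V)$ and to produce a transfer for each of the two inclusions
\[
\R(\pos,\D,V)\ \subset\ \R(\pos,V)\ \subset\ \R(\pos),
\]
and then to compose them. For the outer inclusion $\R(\pos,V)\subset\R(\pos)$, I would simply invoke the construction of $\alpha_V^!$ given just before \cref{afdfhsfg}: since $V$ is finite, the function $\alpha_V^!$ is well defined (this is where finiteness of $V$ is used, to take maxima), it satisfies $\alpha_V^!\alpha_V=\id$ and $\alpha_V\alpha_V^!(z)\le z$, and hence $(\alpha_V^!)_\ast$ is the transfer of $\alpha_V$. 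Note that this half of the argument does not need $\pos$ to be finite or of finite type, only $V$ to be finite.

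For the inner inclusion, the idea is to apply \cref{afdhfgsfdf} with $\R(\pos,V)$ playing the role of the ambient poset. This requires checking three things. First, $\R(\pos,V)$ has dimension at most $1$: this follows from \cref{prop_real_dim1} together with the fact that a full subposet of a poset of dimension at most $1$ again has dimension at most $1$. Second, $\R(\pos,\D,V)$ is closed in $\R(\pos,V)$: this is exactly \cref{sfgdgjgfjkhjfm}, using the hypothesis that $\D$ is closed in $\pos$. Third, and this is the only point requiring a genuine computation, $\R(\pos,V)$ is of finite type. Here I would fix $z\in\R(\pos,V)$ and bound $\R(\pos,V)\le z$ by cases according to whether $z$ lies in $\pos$ or in $\mathcal{I}(\pos,V)$; in each case the elements below $z$ consist of finitely many points of $\pos$ (since $\pos$ is of finite type) together with interval points $(x,y,t)$ whose first entry $x$ lies in the finite set $\pos\le\pi_0(z)$, whose second entry $y$ lies in the finite set $\mathcal{P}(x)\subset\pos\le x$, and whose parameter $t$ lies in the finite set $V$. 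With these three facts in hand, \cref{afdhfgsfdf} produces a transfer, say $\beta^!$, for the inclusion $\beta\colon\R(\pos,\D,V)\subset\R(\pos,V)$.

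Finally, I would combine the two transfers. Writing $\gamma\colon\R(\pos,V)\subset\R(\pos)$ for the outer inclusion, the composite $\gamma\beta$ is the inclusion $\R(\pos,\D,V)\subset\R(\pos)$, and since adjoining a global minimum is functorial we have $(\gamma\beta)_\ast=\gamma_\ast\beta_\ast$. As the right adjoint of a composite is the composite of the right adjoints in the reverse order, the functor $\beta^!\,(\alpha_V^!)_\ast$ is right adjoint to $\gamma_\ast\beta_\ast$, and is therefore, by \cref{def:transfer}, the transfer of $\R(\pos,\D,V)\subset\R(\pos)$. The only non-formal step is the finite-type verification for $\R(\pos,V)$; everything else is either quoted from the earlier results or a routine manipulation of adjunctions, so I expect that bookkeeping to be the main, though modest, obstacle.
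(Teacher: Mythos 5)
Your proposal is correct and takes essentially the same route as the paper: factor the inclusion as $\R(\pos,\D,V)\subset\R(\pos,V)\subset\R(\pos)$, get a transfer for the inner inclusion from \cref{sfgdgjgfjkhjfm} and \cref{afdhfgsfdf}, get one for the outer inclusion from the construction of $\alpha_V^!$ (using finiteness of $V$), and compose. If anything, your explicit case-by-case verification that $\R(\pos,V)$ is of finite type is more careful than the paper's, which asserts this follows from $\R(\pos,V)$ being a full subposet of $\R(\pos)$ --- a justification that cannot be taken literally, since $\R(\pos)$ itself is not of finite type (any interval point has infinitely many elements below it), so the finiteness really does rest on $\pos$ being of finite type and $V$ being finite, exactly as you argue.
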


\begin{proof}
By \cref{prop_real_dim1}, the poset $\R(\pos)$ has dimension at most 1. 
Since  $\R(\pos, V)$  is a full subposet of $\R(\pos)$, it is of 
finite type and dimension at most $1$.  
We can then use \cref{sfgdgjgfjkhjfm} and \cref{afdhfgsfdf} to conclude that $\R(\pos,\D, V)\subset \R(\pos, V)$ admits a transfer. 
Since $\R(\pos, V)\subset \R(\pos)$ also admits a transfer (here we use the fact that $V$ is finite, see the discussion before \cref{afdfhsfg},
so does the composition $\R(\pos,\D, V)\subset \R(\pos)$.
\end{proof}

We are now ready to generalize \cref{afdfhsfg}:

\begin{theorem}\label{adfgsdfjdghn}
    Let $\C$ be a category closed under finite colimits and $\pos$ be a finite type poset of dimension at most $1$ such that, for every finite $\D\subset \pos$, the set $ \text{\rm suplim}(\D)$ is also finite.
    Then, for every functor $X$ in  $\tame(\R(\pos), \C)$, there is a finite $V\subset (-1,0)$ and a finite closed $\D\subset \pos$ for which the inclusion $\R(\pos,\D, V)\subset \R(\pos)$ discretizes $X$. 
    In particular, all functors in $\tame(\R(\pos), \C)$ can be discretized by functors admitting a transfer. 
\end{theorem}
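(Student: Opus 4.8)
The plan is to reduce the statement to the finite case already handled in \cref{afdfhsfg} by carving out of the infinite realization $\R(\pos)$ a finite full subposet of the form $\R(\pos, \D, V)$ that still contains a given discretizing set. Since $X$ is tame, it is discretized by some finite full subposet inclusion $\mathcal{E}\subset \R(\pos)$. From $\mathcal{E}$ I would extract the two finite pieces of combinatorial data that cut out a slab: the finite set of interpolation heights $V\coloneqq T(\mathcal{E})\setminus\{0\}\subset(-1,0)$, and the finite set of base points $\D_0\coloneqq\pi_0(\mathcal{E})\subset\pos$. The obvious candidate $\R(\pos, \D_0, V)$ already contains $\mathcal{E}$, but it need not be closed in $\R(\pos, V)$, and closedness of the base is precisely the hypothesis required by the corollary immediately preceding this theorem to produce a transfer. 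Flagging the base-closedness as the crux is the whole point of the argument.

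The main obstacle, and the only place where the standing hypotheses on $\pos$ are used, is repairing this defect while retaining finiteness. To make the base closed I would replace $\D_0$ by its closure $\D\coloneqq\overline{\D_0}\subset\pos$, which is closed by construction. The danger is that taking closures could enlarge a finite set to an infinite one; here the assumption that $\text{\rm suplim}(U)$ is finite for every finite $U\subset\pos$ saves the day, since by \cref{adfghfjdghkjfh} it is equivalent to the finiteness of $\overline{\D_0}$ for every finite $\D_0$. Hence $\D=\overline{\D_0}$ is a finite closed full subposet of $\pos$. This is the substantive step: without it the whole reduction to finite posets would break down.

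The remaining steps are bookkeeping. First, $\R(\pos, \D, V)$ is finite, because $\D$ and $V$ are finite and each covering set $\mathcal{P}(x)$ for $x\in\D$ is finite, as $\pos$ is of finite type (so $\pos\leq x$, hence $\mathcal{P}(x)\subset \pos< x$, is finite); thus $\mathcal{I}(\pos,\D,V)=\coprod_{x\in\D}\coprod_{y\in\mathcal{P}(x)}V$ is finite. Second, $\mathcal{E}\subset\R(\pos,\D,V)$: every $z\in\mathcal{E}$ satisfies $\pi_0(z)\in\D_0\subset\D$ and $T(z)\in V\cup\{0\}$, so $z$ lies in the slab. Since $\mathcal{E}$ discretizes $X$ and $\R(\pos,\D,V)$ is a finite full subposet containing $\mathcal{E}$, \cref{prop_full_lke}.\textit{1} shows that $\R(\pos,\D,V)$ discretizes $X$ as well, which is the explicit discretization claimed. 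Finally, because $\D$ is closed, $V$ is finite, and $\pos$ is of finite type and of dimension at most $1$, the corollary immediately preceding this theorem yields that the inclusion $\R(\pos,\D,V)\subset\R(\pos)$ admits a transfer, giving the concluding ``in particular'' clause.
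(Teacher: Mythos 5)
Your proposal is correct and follows essentially the same route as the paper's own proof: discretize $X$ by a finite $\mathcal{U}\subset\R(\pos)$, set $V=T(\mathcal{U})\setminus\{0\}$ and $\D=\overline{\pi_0(\mathcal{U})}$ (finite by \cref{adfghfjdghkjfh}, closed by construction), note $\mathcal{U}\subset\R(\pos,\D,V)$ so that \cref{prop_full_lke}.\textit{1} gives the discretization, and invoke the preceding corollary for the transfer. Your additional verification that $\R(\pos,\D,V)$ is finite (via finiteness of each $\mathcal{P}(x)$ from the finite type hypothesis) is a detail the paper leaves implicit, but it is the same argument.
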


\begin{proof}
Let $X$ in $\tame(\R(\pos), \C)$ be discretized by a finite ${{\mathcal U}}\subset \R(\pos)$.
Set $\D\coloneqq\overline{\pi_0({\mathcal U})}$ and $V\coloneqq T({\mathcal U})\setminus\{0\}$.
Then $\D\subset \pos$ is a finite (see \cref{adfghfjdghkjfh}) and closed  subset, $V$ is  finite, and ${\mathcal U}\subset \R(\pos,\D, V)$. 
Since $\R(\pos,\D, V)$ is finite, it also discretizes $X$.
\end{proof}

\section{Chain complexes, tameness, and dimension 1}\label{subs_unify}
In this section, we summarize what was presented in previous sections under additional assumptions.
Namely, we assume that $\pos$ is a finite type poset of dimension at most $1$ such that, for every finite $\D\subset \pos$, the set $ \text{\rm suplim}(\D)$ is also finite. 
We assume also that $\C$ is an abelian category of projective dimension 0, for example, the category of vector spaces over a field  $\mathbb F$. 
Since all objects in $\C$ are projective, the category $\C$ satisfies the minimal cover axiom (see \cref{subs_abmin}).  
Using $\pos$ and $\C$, under these assumptions, we can form the following categories with various structures:

\noindent\rule{\textwidth}{0.5pt}

\vspace{1mm}
\noindent
\fbox{$\R(\pos)$:} \quad the realization of the poset $\pos$ (see~\cref{sdgdfhdfgh}), which has dimension at most~1 (see~\cref{prop_real_dim1})
\\
\noindent\rule{\textwidth}{0.5pt}

\vspace{1mm}
\noindent
\fbox{$\tame(\R(\pos), \C)$:}\quad the category of tame functors $X\colon \R(\pos)\to\C$. It  enjoys the following properties:
    \begin{itemize}
        \item All its objects can be discretized by a poset functor admitting a transfer (see~\cref{adfgsdfjdghn}).
        \item It is an abelian category satisfying the minimal cover axiom, and it has projective dimension at most 1 (see~\cref{th_tame_ab}).
        \item An object in $\tame(\R(\pos), \C)$ is projective if, and only if, it is isomorphic to a functor of the form
        $\bigoplus_{x\in\D} P_x(x,-)$ for some sequence  $\{P_x\}_{x\in\D}$ of objects in $\C$ indexed by a finite subset  $\D\subset \R(\pos)$
        (see~\cref{asfddhdhfj}). \vspace{-3mm}
    \end{itemize}
\noindent\rule{\textwidth}{0.5pt}

\vspace{1mm}
\noindent    
\fbox{$\ch(\C)$:}\quad the category of non-negative chain complexes in the abelian category $\C$. It enjoys the following properties:
\begin{itemize}
    \item It is an abelian category satisfying the minimal cover axiom (see~\cref{prop:minimal_cover}).
    \item An object $X$ in $\ch(\C)$ is projective if, and only if, $H_n(X)=0$ for all $n>0$, or equivalently if, and only if, $X$ is isomorphic to a functor of the form $\bigoplus _{i\in I}D(A_i)$ for some sequence of objects $\{A_i\}_{i\in I}$ in $\C$
    (see~\cref{prop:projective_characterization}).
     \item It is a model category (see \Cref{prop:ch_model}) satisfying the minimal cofibrant factorization axiom (see~\cref{cor:ch-min-cofbnt-replmnt}) with all objects being cofibrant.
     \vspace{-3mm}
\end{itemize}
\noindent\rule{\textwidth}{0.5pt}

\vspace{1mm}
\noindent    
\fbox{$\ch(\tame(\R(\pos), \C))$:}\quad the category of non-negative chain complexes in the abelian category $\tame(\R(\pos), \C)$. 
It enjoys the following properties:
\begin{itemize}
    \item It is an abelian category satisfying the minimal cover axiom (see~\cref{prop:minimal_cover} and \cref{th_tame_ab}).
    \item An object $X$ in $\ch(\tame(\R(\pos), \C))$ is projective if, and only if, 
    $H_n(X)=0$ for all $n>0$, and $H_0(X)$ and $X_n$ are projective in $\tame(\R(\pos), \C)$
    for all $n\geq 0$, 
    or equivalently if, and only if, $X$ is isomorphic to a functor of the form 
    $\bigoplus _{i\in I}D(A_i)$ where $\{A_i\}_{i\in I}$ is a sequence of projective objects in $\tame(\R(\pos), \C)$
    (see~\cref{prop:projective_characterization});
    \item It is a model category (see \Cref{prop:ch_model}) satisfying the minimal cofibrant factorization axiom (see~\cref{cor:ch-min-cofbnt-replmnt}).
\item Every cofibrant object $X$ in $\ch(\tame(\R(\pos), \C))$ is isomorphic to $\bigoplus_{n\geq 0}S^n(P[n]) \oplus 
D^{n+1}(Y_n)$, where $P[n]$ is a minimal resolution of $H_n(X)$ in $\tame(\R(\pos), \C)$ (necessarily of length at most 1) and $Y_n$ is a projective object in $\tame(\R(\pos), \C)$  for every $n\geq 0$ (see~\cref{adgfgjhjk} and recall that, by \cref{th_tame_ab}.\textit{4}, the category $\tame(\R(\pos), \C)$ is  abelian  of projective dimension at most 1).
\vspace{-3mm}
 \end{itemize}
\noindent\rule{\textwidth}{0.5pt}

\vspace{1mm}
\noindent    
\fbox{$\tame(\R(\pos),\ch(\C))$:}\quad  the category of tame functors $X\colon \R(\pos)\to\ch(\C)$. It enjoys the following properties:
\begin{itemize}
    \item All its objects can be discretized by a poset functor admitting a transfer (see~\cref{adfgsdfjdghn});
    \item It is an abelian category satisfying the minimal cover axiom(see~\cref{th_tame_ab} and \cref{prop:minimal_cover}).
    \item An object in $\tame(\R(\pos),\ch(\C))$ is projective if, and only if, it is isomorphic to a functor of the form
    $\bigoplus_{x\in\D} P_x(x,-)$ for some sequence  $\{P_x\}_{x\in\D}$ of projective objects in $\ch(\C)$ indexed by a finite subset  $\D\subset \R(\pos)$ (see~\cref{asfddhdhfj}).
    \item It is a model category satisfying the minimal cofibrant factorization axiom (see~\cref{cor:ch-min-cofbnt-replmnt} and~\cref{th_tame_mod}).
    \item Every cofibrant object in $\tame(\pos, \ch(\C))$ is isomorphic to a direct sum of functors whose values are chain complexes non-zero in at most two consecutive degrees with differentials being monomorphisms (use the structure \cref{adgfgjhjk} for cofibrant objects in $\ch(\tame(\pos, \C))$ and \cref{prop_Ipreservesstuff} to see that an object in $\tame(\pos, \ch(\C))$ is cofibrant if, and only if, $I(X)$ is cofibrant in $\ch(\tame(\pos, \C))$).
\end{itemize}
\noindent\rule{\textwidth}{0.5pt}
\vspace{1mm}

We summarize this discussion as follows:

\begin{theorem}\label{thm_indecomposables}
Assume: 
\begin{itemize}
    \item $\C$ is an abelian category where every object is projective;
    \item $\pos$ is a finite type poset of dimension at most $1$ such that, for every finite $\D\subset \pos$, the set $ \text{\rm suplim}(\D)$ is also finite. 
\end{itemize}
Then every cofibrant object in $\tame(\R(\pos),\ch(\C))$ is isomorphic to a direct sum of indecomposable cofibrant objects. Moreover, an object $X$ in $\tame(\R(\pos),\ch(\C))$ is indecomposable cofibrant if, and only if, there is a natural number $n$ such that 
one of the following conditions holds:
\begin{enumerate}
    \item $H_\ast X=0$, in which case  $X$ is isomorphic to $D^{n+1}(A)$ 
     for $A$ an indecomposable projective object in $\tame(\R(\pos), \C)$;
    \item $H_nX\neq 0$, in which case $H_nX$ is indecomposable in $\tame(\R(\pos), \C)$ and 
    $X$ is isomorphic to $S^n(P)$ where $P$
    is the minimal projective resolution of $H_nX$ in $\tame(\R(\pos), \C)$. 
\end{enumerate}
\end{theorem}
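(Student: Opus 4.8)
The plan is to prove the statement by transporting it, through the embedding $I$ of \cref{prop_Ipreservesstuff}, to the already-understood category $\ch(\A)$, where I set $\A \coloneqq \tame(\R(\pos),\C)$. First I would record the relevant properties of $\A$. By \cref{adfgsdfjdghn} every functor in $\A$ is discretized by a poset functor admitting a transfer, so \cref{th_tame_ab} applies: $\A$ is abelian, satisfies the minimal cover axiom, and — since $\R(\pos)$ has dimension at most $1$ (\cref{prop_real_dim1}) and $\text{proj-dim}(\C)=0$ — has $\text{proj-dim}(\A)\le 1$. The decisive consequence is that \emph{every} object of $\A$ has projective dimension at most $1$; hence for \emph{every} object $Z$ of $\ch(\A)$ each homology $H_n(Z)$ automatically satisfies the hypothesis of \cref{adgfgjhjk}. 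Thus both \cref{adgfgjhjk} and the classification of indecomposable cofibrant chain complexes over an abelian category with the minimal cover axiom apply to all cofibrant objects of $\ch(\A)$ with no further assumption on homology.

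Next I would set up the comparison with $\tame(\R(\pos),\ch(\C))$. By \cref{prop_Ipreservesstuff}, the functor $I\colon \tame(\R(\pos),\ch(\C))\to\ch(\A)$ is a fully faithful exact functor that preserves and reflects projectivity and cofibrations; in particular $X$ is cofibrant if and only if $I(X)$ is. Because kernels and cokernels in $\A$ are computed pointwise, $I$ commutes with homology, so $H_n(I(X))$ coincides with the pointwise homology $H_n(X)$ as an object of $\A$. Full faithfulness yields a ring isomorphism $\End(X)\cong\End(I(X))$, and since both categories are abelian, hence idempotent complete, this shows $X$ is indecomposable exactly when $I(X)$ is. It therefore suffices to classify indecomposable cofibrant objects of $\ch(\A)$ lying in the essential image of $I$, and to check that the decompositions produced there descend along $I$.

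I would then run the two halves of the statement. For the existence of a decomposition, take a cofibrant $X$; then $I(X)\cong\bigoplus_{n\ge0}S^n(P[n])\oplus D^{n+1}(Y_n)$ by \cref{adgfgjhjk}, where $P[n]$ is the minimal resolution of $H_n(X)$ (of length at most $1$) and $Y_n$ is projective in $\A$. Each summand is nonzero in at most two consecutive degrees with projective components, hence bounded and so in the essential image of $I$; since $\tame(\R(\pos),\ch(\C))$ is abelian (\cref{th_tame_ab}) and thus idempotent complete, the splitting lifts back along $I$. Decomposing each coefficient $H_n(X)$ and each projective $Y_n$ into indecomposables in $\A$, and using that minimal resolutions are additive and send indecomposables to indecomposables (\cref{minprojecover_iff_ind}), refines this into a sum of indecomposable cofibrant summands of the two advertised shapes. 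For the classification I would apply the $\ch(\A)$-version of the indecomposability proposition to $I(X)$: either $H_\ast(X)=0$, forcing $I(X)\cong D^{n+1}(A)$ with $A$ an indecomposable projective of $\A$, or the homology is concentrated in a single degree $n$ with $H_n(X)$ indecomposable and $I(X)\cong S^n(P)$ for $P$ the minimal resolution of $H_n(X)$. Transporting back along the fully faithful $I$ — using $H_n(I(X))=H_n(X)$ and that $D^{n+1}(A)$ and $S^n(P)$ are bounded, hence in its image — gives exactly cases (1) and (2).

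The main obstacle is the bookkeeping forced by $I$ failing to be essentially surjective: one must check that every summand produced in $\ch(\A)$ is bounded, so as to lie in the image of $I$, and that the resulting orthogonal idempotents split in $\tame(\R(\pos),\ch(\C))$, for which idempotent completeness of this abelian category is precisely what is needed; a secondary point is that refining the coefficients $H_n(X)$ and $Y_n$ into indecomposables in $\A$ (ultimately into indecomposable objects of $\C$) is where decomposability is invoked. Conceptually, however, the heart of the argument — and the only place the hypotheses genuinely enter — is the inequality $\text{proj-dim}(\A)\le 1$: it is what confines each $P[n]$ to two consecutive degrees and makes \cref{adgfgjhjk} applicable across the board. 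This is exactly the feature that breaks when $\pos$ has dimension larger than $1$ or $\C$ has higher projective dimension, as the example of \cref{diag_the_counterexample} shows.
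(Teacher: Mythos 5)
Your proposal is correct and takes essentially the same route as the paper: the paper's own proof is exactly the reduction, via the embedding $I$ of \cref{prop_Ipreservesstuff}, of cofibrant objects of $\tame(\R(\pos),\ch(\C))$ to cofibrant objects of $\ch(\tame(\R(\pos),\C))$, where the structure theorem \cref{adgfgjhjk} and the classification of indecomposable cofibrant chain complexes apply because $\tame(\R(\pos),\C)$ is abelian with the minimal cover axiom and has projective dimension at most $1$ (\cref{th_tame_ab}, \cref{prop_real_dim1}, \cref{adfgsdfjdghn}). The only difference is that you make explicit the bookkeeping the paper leaves implicit, namely that the summands produced in $\ch(\tame(\R(\pos),\C))$ are bounded and hence lie in the essential image of $I$, and that full faithfulness plus idempotent completeness transport both the splittings and indecomposability back to $\tame(\R(\pos),\ch(\C))$.
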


\section{Generating indecomposables by gluing}\label{sec_gluing}
In this section, we exhibit a general method to construct indecomposables in a functor category.
We give conditions under which it is possible to extend the indexing poset of an indecomposable by gluing another object to it while preserving indecomposability. 
The results of this section hold more in general than the ones in the previous sections: no condition is imposed on the indexing poset, and the landing category $\C$ is only assumed to be abelian.
\medskip

Recall that the \define{radical} of a  functor $X\colon \D\to \C$, indexed by  a finite poset $\D$,
is the  subfunctor $\rad X\subset X$ defined at each $x$ in $\D$ as $\text{im}\left(\bigoplus_{y<x} X(y)\to X(x)\right)$.
The following is a well-known result characterizing indecomposable functors (see, for example, \cite[Section 3.4]{jacobson}).

\begin{lemma}\label{lem:indecomposable}
Let $\C$ be an abelian category and $\D$ be a finite poset.
The following statements are equivalent about a functor $X\colon \D\to\C$:
\begin{itemize}
    \item $X$  is indecomposable.
    \item The monoid $\End (X)$ of endomorphisms on $X$  does not contain any non-trivial idempotent elements.
    \item Every endomorphism of $X$ is either an isomorphism or factors through the subfunctor $\rad X\subset  X$.
    \item Every endomorphism of $X$ is either an isomorphism or is nilpotent.
\end{itemize}
\end{lemma}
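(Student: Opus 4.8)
The plan is to recast the four conditions as properties of the endomorphism ring $R\coloneqq\End(X)$; label them (i) indecomposability, (ii) absence of nontrivial idempotents, (iii) every endomorphism is an isomorphism or factors through $\rad X$, and (iv) every endomorphism is an isomorphism or nilpotent. The equivalence (i)$\Leftrightarrow$(ii) is the standard correspondence between decompositions and idempotents: a splitting $X\cong X_1\oplus X_2$ gives the idempotent projection onto $X_1$, and conversely an idempotent $e\in R$ produces $X\cong\image(e)\oplus\image(\id-e)$ in the abelian category $\Fun(\D,\C)$, the decomposition being nontrivial exactly when $e\notin\{0,\id\}$. To bring in (iii), note that every endomorphism preserves the radical (the assignment $X\mapsto\rad X$ is functorial), so passing to the quotient gives a ring homomorphism $\rho\colon R\to\End(X/\rad X)$ whose kernel $J\coloneqq\ker\rho$ is exactly the set of endomorphisms factoring through $\rad X\hookrightarrow X$; thus (iii) says that $R$ is the union of its units and the ideal $J$. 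Moreover $X/\rad X$ is the semisimple functor determined degreewise by the objects $H_0^x(X)$, so that $\rho(f)$ is detected by the family $\{H_0^x(f)\}_{x\in\D}$ of \cref{sfsfdhfgs}.

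The central tool is the radical filtration $X\supseteq\rad X\supseteq\rad^2X\supseteq\cdots$. Since $\D$ is finite, the heights of its elements are bounded, so $\rad^kX(x)=0$ as soon as $k$ exceeds the height of $x$, and the filtration reaches $0$ after finitely many steps (no hypothesis on $\C$ is needed here). Any $f\in J$ satisfies $f(\rad^kX)\subseteq\rad^{k+1}X$, so a high enough power of $f$ vanishes; hence $J$ is a nil ideal, and in particular $J\subseteq\operatorname{rad}(R)$. This yields (iii)$\Rightarrow$(iv) at once, since a non-isomorphism then lies in $J$ and is therefore nilpotent. The same filtration gives the superfluousness of $\rad X$: no nonzero direct summand of $X$ can lie inside $\rad X$; consequently a nontrivial idempotent cannot factor through $\rad X$, as its image would be such a summand, which proves (iii)$\Rightarrow$(ii). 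The opposite easy implication (iv)$\Rightarrow$(ii) is immediate, since a nontrivial idempotent is neither an isomorphism nor nilpotent.

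What remains is to return from indecomposability to (iii) and (iv), which closes the cycle. For (ii)$\Rightarrow$(iv) I would run the Fitting argument (which requires $X$ to be of finite length): given $f\in R$, stabilise the kernels and images of the powers $f^n$ to split $X\cong\ker(f^n)\oplus\image(f^n)$, so that (ii) forces one summand to vanish and $f$ is an isomorphism or nilpotent. The genuinely delicate step, and the one I expect to be the main obstacle, is (ii)$\Rightarrow$(iii): one must show that a non-isomorphism $f$ actually factors through $\rad X$, equivalently that the induced map $\rho(f)$ on the semisimple quotient $X/\rad X$ vanishes, i.e.\ that $\operatorname{rad}(R)\subseteq J$. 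Here I would combine the local structure of $R$ coming from (iv) (every element is a unit or lies in the maximal ideal of nilpotents) with the semisimplicity of $X/\rad X$ and the fact, forced by the indecomposability of $X$, that endomorphisms act on each top space $H_0^x(X)$ through the residue field of $R$, so that $\rho$ factors through $R/\operatorname{rad}(R)$; this is precisely the ring-theoretic content of the local-ring/Krull--Schmidt theory recorded in \cite[Section~3.4]{jacobson}. Matching the Jacobson radical $\operatorname{rad}(R)$ with the functorial radical $\rad X$ is exactly where the finite length of $X$ and the semisimplicity of the top genuinely enter.
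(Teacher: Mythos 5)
The paper does not prove this lemma at all --- it records it as well known, citing \cite[Section 3.4]{jacobson} --- so your attempt has to stand on its own merits rather than be compared to an argument in the text. The parts you actually carry out are correct, and correct in the stated generality: (i)$\Leftrightarrow$(ii) holds because idempotents split in any abelian category, and your radical-filtration argument is sound --- finiteness of $\D$ forces $\rad^k X=0$ for $k$ large, and any endomorphism landing in $\rad X$ shifts the filtration, so the ideal $J$ of endomorphisms factoring through $\rad X$ is nil --- which gives (iii)$\Rightarrow$(iv) and (iii),(iv)$\Rightarrow$(ii).

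But the cycle is never closed: both implications out of (ii) are missing, and the gap cannot be filled. For (ii)$\Rightarrow$(iv) you invoke Fitting's lemma while conceding it needs $X$ of finite length, a hypothesis the lemma does not grant: over the one-point poset with $\C$ the category of abelian groups, $X=\mathbb{Z}$ satisfies (ii), yet multiplication by $2$ is neither invertible nor nilpotent. For (ii)$\Rightarrow$(iii), which you yourself flag as the main obstacle, you offer only a plan ending in an appeal to the same Jacobson reference the paper cites. In fact no argument can establish it at this level of generality: take $\D$ the one-point poset, $\C$ the category of abelian groups, and $X=\mathbb{Z}/4\mathbb{Z}$. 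Then $\rad X=\image\bigl(\bigoplus_{y<x}X(y)\to X(x)\bigr)=0$ because there is no $y<x$; the object $X$ is indecomposable of finite length, and every endomorphism, multiplication by $n$, is an isomorphism ($n$ odd) or nilpotent ($n$ even), so (i), (ii), (iv) all hold --- while multiplication by $2$ is neither an isomorphism nor the zero map, so (iii) fails. What your sketch elides --- identifying the Jacobson radical of $\End(X)$ with the ideal $J$ cut out by the poset radical, via the asserted ``semisimplicity'' of $X/\rad X$ --- is precisely what is false for general $\C$: the objects $H_0^x(X)$ need not be semisimple, and the poset radical need not see the internal radicals of the values of $X$. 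The equivalence of (iii) and (iv) with indecomposability is really a statement about special $\C$ (e.g.\ $\C=\vect$, where $\Fun(\D,\vect)$ is the category of modules over the incidence algebra of $\D$ and the poset radical coincides with the Jacobson radical); your closing remark correctly locates this matching of radicals as the crux, but it is a restriction on $\C$ that no completion of your argument, as written against the lemma's stated hypotheses, could repair.
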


For a functor $X\colon \D\to \C$ and a subposet $\A\subseteq \D$, we denote by
$X_\A$ the restriction of $X$ to $\A$. 

\begin{theorem}\label{th_gluing}
Let $\D$ be a finite poset and $\A,\B\subset \D$ be its  subposets such that $\D =\A\cup \B$. Consider   a functor
 $X\colon \D \to \C$   and the unique morphism 
 $\beta\colon i^k X_{\A\cap \B}\to X_\B$  given by the universality of the left Kan extension along the inclusion $i\colon \A\cap \B\to \B$.
Assume  $X_\A$ is indecomposable. Then
the following statements are equivalent:
\begin{enumerate}
    \item $X$ is indecomposable.
    \item The morphism $\rad X_\B\subset  X_\B$ induces an isomorphism between $\hom (\coker \beta, \rad X_\B)$ and $\hom (\coker \beta, X_\B)$.
    \item The kernel of the restriction homomorphism $\End (X_\B)\to \End(X_{\A\cap\B})$ consists of nilpotent elements.
\end{enumerate}
\end{theorem}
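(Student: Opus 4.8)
The plan is to prove the cycle of implications (1)$\Rightarrow$(2)$\Rightarrow$(3)$\Rightarrow$(1). The starting point, used throughout, is a reformulation of condition (2). Writing $K\coloneqq\ker(\End(X_\B)\to\End(X_{\A\cap\B}))$ for the ideal appearing in (3), I would first observe that $\beta$ is the morphism adjoint to $\id_{X_{\A\cap\B}}$ under the left Kan extension adjunction along $i\colon\A\cap\B\to\B$; consequently, for $\psi\in\End(X_\B)$ the composite $\psi\beta$ is adjoint to the restriction $\psi|_{\A\cap\B}$. Hence $\psi\in K$ if and only if $\psi\beta=0$, i.e.\ if and only if $\psi$ factors (uniquely) as $\bar\psi\pi$ through the quotient $\pi\colon X_\B\to\coker\beta$. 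This produces a natural isomorphism of abelian groups $K\cong\hom(\coker\beta,X_\B)$, $\psi\mapsto\bar\psi$, with $\image\psi=\image\bar\psi$. Since $\rad X_\B\hookrightarrow X_\B$ is a monomorphism, the induced map $\hom(\coker\beta,\rad X_\B)\to\hom(\coker\beta,X_\B)$ is always injective, and it is surjective exactly when every $\bar\psi$ factors through $\rad X_\B$. Thus condition (2) is equivalent to the statement that every $\psi\in K$ satisfies $\image\psi\subseteq\rad X_\B$.

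For (2)$\Rightarrow$(3) I would use the radical filtration $X_\B\supseteq\rad X_\B\supseteq\rad^2 X_\B\supseteq\cdots$, which terminates at $0$ after finitely many steps because $\B$ is finite. Every endomorphism of $X_\B$ preserves this filtration by naturality. If $\image\psi\subseteq\rad X_\B$, then corestricting $\psi$ to $\rad X_\B$ and using that natural transformations send $\rad^k$ into $\rad^k$ gives $\psi(\rad^k X_\B)\subseteq\rad^{k+1}X_\B$, whence inductively $\psi^{k}(X_\B)\subseteq\rad^{k}X_\B$ and therefore $\psi^N=0$ for $N$ larger than the length of the filtration. So every element of $K$ is nilpotent, which is (3).

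For (3)$\Rightarrow$(1) I would argue via idempotents, using \cref{lem:indecomposable}. Let $\phi\in\End(X)$ be idempotent. Its restriction $\phi_\A$ is an idempotent of $\End(X_\A)$, and since $X_\A$ is indecomposable \cref{lem:indecomposable} forces $\phi_\A\in\{0,\id\}$; replacing $\phi$ by $\id-\phi$ if necessary, I may assume $\phi_\A=0$. Then $\phi|_{\A\cap\B}=0$, so the restriction $\phi_\B$ lies in $K$; being an idempotent in a nil ideal, $\phi_\B=0$. As $\D=\A\cup\B$, the endomorphism $\phi$ now vanishes on every object, so $\phi=0$. Hence $\End(X)$ has no nontrivial idempotents, and \cref{lem:indecomposable} gives that $X$ is indecomposable.

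The remaining implication (1)$\Rightarrow$(2) is where I expect the main difficulty. Equivalently, I must show that if $X$ is indecomposable then every $\psi\in K$ induces the zero endomorphism on the top $X_\B/\rad X_\B$, and I would attack the contrapositive: from a $\psi\in K$ whose image escapes $\rad X_\B$ --- equivalently a nonzero induced map on some $H_0^{x}(X_\B)$ with $x\in\B\setminus\A$ --- I would build a nontrivial idempotent of $X$, contradicting indecomposability. The naive candidate, the extension that is $0$ on $\A$ and $\psi$ on $\B$, is \emph{not} in general a natural transformation on $\D$: it fails precisely on the relations joining $\A\setminus\B$ to $\B\setminus\A$. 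The core of the argument is therefore to promote the local, top-level datum on $\B$ to a global idempotent that respects these crossing relations, and it is exactly here that the factorization through $\coker\beta$ and the indecomposability of $X_\A$ must be combined: indecomposability of $X_\A$ pins the $\A$-component of any compatible endomorphism to $0$ or $\id$, while $\coker\beta$ records the part of $X_\B$ genuinely unseen from $\A\cap\B$, on which the idempotent is to be supported. Making this promotion precise, and checking that the resulting idempotent is neither $0$ nor $\id$, is the technical heart of the proof.
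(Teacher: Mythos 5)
Your preliminary identification $K\coloneqq\ker\bigl(\End(X_\B)\to\End(X_{\A\cap\B})\bigr)\cong\hom(\coker\beta,X_\B)$ is correct and matches what the paper uses implicitly, and two of your three implications are sound. Your (3)$\Rightarrow$(1) coincides with the paper's proof. Your (2)$\Rightarrow$(3), via the radical filtration, is actually \emph{more} careful than the paper's: at that point the paper appeals to \cref{lem:indecomposable}, although $X_\B$ is not assumed indecomposable, and the fact genuinely needed there is exactly the general statement you prove, namely that over a finite poset any endomorphism whose image lies in the radical is nilpotent.

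The genuine gap is (1)$\Rightarrow$(2): you only describe a plan (pass to the contrapositive and manufacture a nontrivial idempotent) and never carry it out, and the plan points in the wrong direction. The paper does not build an idempotent for this implication; it uses the other dichotomy in \cref{lem:indecomposable}. Given $\phi\colon\coker\beta\to X_\B$, let $\overline{\phi}\in\End(X_\B)$ be $\phi$ precomposed with the quotient $X_\B\to\coker\beta$; it vanishes on $\A\cap\B$, so it is extended by zero over $\A$ to an endomorphism $\psi$ of $X$. By \cref{lem:indecomposable}, $\psi$ is either an isomorphism --- impossible, since $\psi_\A=0$ while $X_\A$ is indecomposable, hence nonzero --- or it factors through $\rad X\subset X$; restricting back to $\B$ and using that the quotient is an epimorphism shows $\phi$ factors through $\rad X_\B$, which is statement (2). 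That said, the obstruction you flagged --- that the zero extension need not be a natural transformation on $\D$ --- is exactly the point the paper's proof glosses over: the extension is natural precisely when every relation joining $\A\setminus\B$ to $\B\setminus\A$ (in either direction) factors through an element of $\A\cap\B$. This holds in all of the paper's applications (see \cref{fig:gluing-posets}) but is not imposed in the statement, and without it the theorem is false: for $\D=\{a<b\}$, $\A=\{a\}$, $\B=\{b\}$, and $X$ the functor $\field\xrightarrow{\id}\field$, both $X$ and $X_\A$ are indecomposable, yet $\A\cap\B=\emptyset$ forces $\coker\beta=X_\B$ and $K=\End(X_\B)=\field$, so (2) and (3) fail. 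So the resolution of your difficulty is not a cleverer ``promotion'' argument --- none can exist in this generality --- but to make the implicit gluing hypothesis explicit, after which the short extension-by-zero argument above completes the proof.
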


\begin{proof}
{\em 1}$\Rightarrow${\em 2}:\quad  
Let $X$ be indecomposable. 
For a natural transformation $\phi\colon \coker \beta\to X_\B$, consider the endomorphism  $\overline{\phi}\colon X_\B\to X_\B$ given by the composition of the quotient $X_\B\to \coker \beta$ and $\phi$.
Note that the restriction of $\overline{\phi}$ to $\A\cap \B$ is the $0$ natural transformation. Thus, there is $\psi\colon X\to X$ whose restrictions to $\A$ and $\B$ are, respectively, the 
$0$ natural transformation and $\overline{\phi}$. 
Since $X$ is indecomposable, by \cref{lem:indecomposable}, $\psi$ is either an isomorphism or factors through $\rad X\subset X$.
The first case is not possible since $X_\A$ is indecomposable, hence non-zero and yet  $\psi$ restricted to $\A$ is the $0$ natural transformation. The natural transformation $\psi$ factors therefore through $\rad X\subset X$ and consequently
so does its restriction $\overline{\phi}$ to $\B$. This means that 
$\phi\colon \coker \beta\to X_\B$ also factors through $\rad X_\B\subset X_\B$ which is what we needed to show.
\smallskip

\noindent
{\em 2}$\Rightarrow${\em 3}:\quad  
Let $\psi\colon X_\B\to X_\B$ be an endomorphism whose restriction to $\A\cap\B$ is the $0$ natural transformation. We can therefore form the following commutative diagram:
\[
\begin{tikzcd}[ampersand replacement=\&]
i^k X_{\A\cap \B} \arrow[r, "\beta"] \arrow[d, "0"'] 
\& X_\B \arrow[d, "\psi"] 
\\
i^k X_{\A\cap \B}  \arrow[r, "\beta"] 
\& X_\B
\end{tikzcd}\]
The morphisms $\psi$ factors therefore 
through a morphism of the form $\coker \beta\to X_\B$, which by assumption factors through $\rad X_\B\subset X_\B$. Consequently, by \cref{lem:indecomposable} and since $\psi$ is not an isomorphism, $\psi$ is nilpotent.
\smallskip

\noindent
{\em 3}$\Rightarrow${\em 1}:\quad 
Consider an idempotent $\psi\colon X\to X$. According to \cref{lem:indecomposable}, we need to prove that either $\psi=0$ or $\psi=\text{id}$. 
Since $X_\A$ is indecomposable, there are two cases: either $\psi_\A=0$ or $\psi_\A=\text{id}$. In the first case, $\psi_\B$
is both idempotent and nilpotent
and hence it has to be $0$, and thus so is $\psi$.
In the second case, $\text{id}-\psi_\B$
is both idempotent and nilpotent, and as before
$\text{id}-\psi_\B$  has to be $0$, and thus 
$\psi=\text{id}$.
\end{proof}

As a direct consequence, we obtain: 

\begin{corollary}\label{asdvadfvdfv}
Under the assumptions of \Cref{th_gluing}:
\begin{enumerate}
\item If $\hom (\coker \beta, X_\B)=0$, then
$X$ is indecomposable.
\item If the restriction homomorphism $\End(X_\B)\to \End(X_{\A\cap \B})$ is an inclusion, then $X$ is indecomposable.
\end{enumerate}
\end{corollary}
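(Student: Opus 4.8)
The plan is to obtain both statements as immediate specializations of the equivalences established in \Cref{th_gluing}: part \textit{1} will follow by verifying condition \textit{2} of that theorem, and part \textit{2} by verifying condition \textit{3}. In both cases the hypothesis of \Cref{th_gluing} (that $X_\A$ is indecomposable) is already in force, so the only task is to produce the relevant condition from the extra assumption.

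For the first statement, I would start from the observation that the canonical inclusion $\rad X_\B \subset X_\B$ is a monomorphism in $\C$, so that the left-exact functor $\hom(\coker\beta, -)$ sends it to an injective homomorphism $\hom(\coker\beta, \rad X_\B) \to \hom(\coker\beta, X_\B)$. Under the hypothesis $\hom(\coker\beta, X_\B) = 0$ the codomain is trivial, which forces the domain to be trivial as well; the induced comparison map is thus an isomorphism between two zero groups. This is precisely condition \textit{2} of \Cref{th_gluing}, and so $X$ is indecomposable.

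For the second statement, I would note that saying the restriction homomorphism $\End(X_\B) \to \End(X_{\A\cap\B})$ is an inclusion means exactly that its kernel is the zero submonoid. Since the zero endomorphism is nilpotent, this kernel trivially consists of nilpotent elements, which is condition \textit{3} of \Cref{th_gluing}; hence $X$ is again indecomposable.

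Both derivations are short enough that I do not expect a genuine obstacle; the only subtlety worth spelling out is the appeal to left-exactness of $\hom(\coker\beta, -)$ in the first part, which is what guarantees that the relevant comparison map is automatically injective, so that the vanishing hypothesis suffices to upgrade it to an isomorphism rather than having to check surjectivity separately.
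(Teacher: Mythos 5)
Your proposal is correct and takes essentially the same route as the paper, which offers no separate proof and presents the corollary as a direct consequence of \cref{th_gluing}: part \textit{1} follows by verifying condition \textit{2} of that theorem (left-exactness of $\hom(\coker\beta,-)$ makes the comparison map an injection into the zero group, hence an isomorphism of zero objects), and part \textit{2} by verifying condition \textit{3} (an injective restriction homomorphism has kernel $\{0\}$, whose only element is nilpotent). Nothing is missing; your explicit appeal to left-exactness is precisely the detail the paper leaves implicit.
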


It turns out that the assumptions of both of the statements in~\Cref{asdvadfvdfv} are equivalent. 
To show this we need a lemma.

\begin{lemma}\label{hom_coker_zero}
There exists at most one $\nu$ making the following diagram commute if, and only if, 
$\hom\left(\coker\beta, X\right) =0$:
\begin{equation}\label{D_unique_extention}
\begin{tikzcd}[ampersand replacement=\&]
Y \arrow[r, "\beta"] \arrow[d, "\eta"'] 
\& X \arrow[d, dotted, "\nu"] 
\\
Y \arrow[r, "\beta"] 
\& X
\end{tikzcd}  
\, .
\end{equation}
\end{lemma}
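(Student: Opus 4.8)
The plan is to recast uniqueness of $\nu$ as the vanishing of the group of endomorphisms of $X$ that are annihilated by precomposition with $\beta$, and then to identify that group with $\hom(\coker\beta, X)$ through the universal property of the cokernel. First I would observe that if $\nu_1$ and $\nu_2$ both fit into Diagram~\eqref{D_unique_extention} for one and the same $\eta$, then $\nu_1\beta = \beta\eta = \nu_2\beta$, so their difference $\mu \coloneqq \nu_1 - \nu_2$ satisfies $\mu\beta = 0$; conversely, if $\nu$ is any such lift and $\mu\colon X \to X$ satisfies $\mu\beta = 0$, then $\nu + \mu$ is again a lift. Hence, whenever it is nonempty, the set of $\nu\colon X\to X$ making the square commute is a coset of the subgroup $K \coloneqq \{\mu\colon X \to X \mid \mu\beta = 0\}$ of $\End(X)$, and uniqueness of $\nu$ (for every $\eta$) is equivalent to $K = 0$.

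The second step identifies $K$ with $\hom(\coker\beta, X)$. Writing $q\colon X \to \coker\beta$ for the quotient, the universal property of the cokernel says that a morphism $\mu\colon X \to X$ satisfies $\mu\beta = 0$ precisely when it factors uniquely as $\mu = \overline{\mu}\, q$ for some $\overline{\mu}\colon \coker\beta \to X$; the assignment $\mu \mapsto \overline{\mu}$ is then an isomorphism of abelian groups $K \cong \hom(\coker\beta, X)$. Combined with the first step, this already yields the backward implication: if $\hom(\coker\beta, X) = 0$, then $K = 0$, so any two lifts of a given $\eta$ coincide.

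For the forward implication I would make the uniqueness hypothesis bite by testing it on $\eta = \id_Y$, for which $\nu = \id_X$ is a lift since $\id_X\,\beta = \beta = \beta\,\id_Y$. Then for every $\mu \in K$ the morphism $\id_X + \mu$ is also a lift of $\id_Y$, so uniqueness forces $\mu = 0$; hence $K = 0$ and therefore $\hom(\coker\beta, X) = 0$. I do not expect a genuine obstacle here. The only points requiring care are the reading of the statement, namely that ``at most one $\nu$'' is quantified over all admissible $\eta$ (and that it suffices to test $\eta = \id_Y$), and the bookkeeping that $\beta$, $\eta$, and $\nu$ are morphisms in the ambient abelian functor category, so that $\coker\beta$ and its universal property are available.
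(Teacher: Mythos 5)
Your proof is correct and follows essentially the same route as the paper's: both reduce the statement to injectivity of the precomposition map $\beta^\ast\colon \End(X)\to \hom(Y,X)$, whose kernel is identified with $\hom(\coker\beta, X)$ --- in the paper via left-exactness of $\hom(-,X)$ applied to $Y\xrightarrow{\beta}X\twoheadrightarrow\coker\beta$, in your write-up via the universal property of the cokernel, which is the same thing. Your explicit coset bookkeeping and the test case $\eta=\id_Y$ simply spell out details the paper leaves implicit (and in fact make the forward implication, which the paper glosses over, fully rigorous).
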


\begin{proof}
Consider the following right exact sequence: $Y\xrightarrow{\beta}X\twoheadrightarrow \coker\beta$. 
By applying $\hom(-,X)$ and $\hom(-,Y)$ to it, we obtain the following commutative diagram:
\begin{equation*}  
\begin{tikzcd}[ampersand replacement=\&]
\hom(Y,X) 
\& \hom(X,X) \ar[l, "\beta^\ast"'] 
\& \hom(\coker\beta,X)\ar[l]
\& 0 \ar[l]
\\
\hom(Y,Y) \ar[u]
\& \hom(X,Y) \ar[l] \ar[u]
\& \hom(\coker\beta, Y)\ar[u] \ar[l]
\& 0 \ar[l]
\end{tikzcd}\, .
\end{equation*}
The condition $\hom(\coker\beta,X)=0$ is equivalent to $\beta^\ast$ being a monomorphism. 
Let us take $\xi\in\hom(Y,X)$ to be the image of $\eta \in \hom(Y,Y)$ along the leftmost vertical map. 
The preimage of $\xi$ under $\beta^*$ is either empty or exactly one morphism $\nu$. 
\end{proof}

\begin{proposition}\label{sdgsdghfg}
Let $\B$ be a finite poset and $\A\subset \B$ be its subposet.
Consider a functor $X\colon \B \to \C$ and the unique morphism $\beta\colon i^k X_{\A}\to X$ given by the universality of the left Kan extension along the inclusion $i\colon \A\subset  \B$. Then the following statements are equivalent:
\begin{enumerate}
    \item $\hom (\coker \beta, X_\B)=0$.
    \item The restriction homomorphism $\End(X)\to \End(X_{\A})$ is an inclusion.
\end{enumerate}
\end{proposition}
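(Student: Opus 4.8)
The plan is to show that both conditions are equivalent to a single intermediate statement, namely injectivity of the precomposition map $\beta^\ast\colon \End(X)\to \hom(i^k X_\A, X)$ sending $\nu$ to $\nu\beta$. Here I write $X$ for $X_\B$, since $X$ is already indexed by $\B$. Once both (1) and (2) are matched to ``$\beta^\ast$ is a monomorphism'', the equivalence follows immediately.

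First I would handle the equivalence of statement (2) with injectivity of $\beta^\ast$ using the adjunction between the left Kan extension $i^k$ and restriction $(-)_\A$ along $i\colon \A\subset \B$. This gives a natural bijection $\hom(i^k X_\A, X)\cong \hom(X_\A, X_\A)=\End(X_\A)$. Since $\beta$ is precisely the morphism adjoint to $\id_{X_\A}$, the triangle identity gives $\beta_\A\circ\eta=\id_{X_\A}$, where $\eta$ is the unit. A short transpose computation then shows that for every $\nu\in\End(X)$ the composite $\nu\beta$ corresponds under this bijection to the restriction $\nu_\A$: indeed the transpose of $\nu\beta$ is $(\nu\beta)_\A\circ\eta=\nu_\A\circ(\beta_\A\circ\eta)=\nu_\A$. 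Hence the restriction homomorphism $\End(X)\to\End(X_\A)$ factors as $\beta^\ast$ followed by the adjunction isomorphism, and composing with an isomorphism does not affect injectivity, so statement (2) holds if and only if $\beta^\ast$ is injective.

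Next I would match statement (1) with injectivity of $\beta^\ast$ by applying the left-exact contravariant functor $\hom(-,X)$ to the right exact sequence $i^k X_\A\xrightarrow{\beta} X\to \coker\beta\to 0$. This yields exactness of $0\to\hom(\coker\beta,X)\to\End(X)\xrightarrow{\beta^\ast}\hom(i^k X_\A,X)$, so $\ker\beta^\ast\cong\hom(\coker\beta,X)$. This is exactly the content of \Cref{hom_coker_zero} with $Y=i^k X_\A$, whose ``at most one $\nu$'' condition is just injectivity of $\beta^\ast$ (taking $\eta=0$ recovers $\ker\beta^\ast=\{0\}$). Therefore $\hom(\coker\beta,X)=0$ if and only if $\beta^\ast$ is a monomorphism, and combining the two reductions proves $(1)\Leftrightarrow(2)$.

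The only genuinely delicate step I anticipate is the first one: correctly identifying $\beta$ as the transpose of the identity and verifying that $\nu\beta$ transposes to $\nu_\A$, since this is where the adjunction bookkeeping could go wrong. Everything else reduces to left-exactness of $\hom(-,X)$ and a direct invocation of the already established \Cref{hom_coker_zero}, so I expect those parts to be routine.
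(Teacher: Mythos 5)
Your proof is correct and takes essentially the same route as the paper: the paper states this proposition without an explicit proof, intending it to follow from \cref{hom_coker_zero}, whose own proof identifies condition \textit{1} with injectivity of $\beta^\ast\colon \End(X)\to \hom(i^k X_{\A}, X)$ via left-exactness of $\hom(-,X)$, exactly as you do. Your adjunction computation (triangle identity $\beta_{\A}\circ\eta=\id_{X_{\A}}$, hence restriction equals $\beta^\ast$ followed by the adjunction isomorphism) correctly supplies the bookkeeping that the paper leaves implicit, and it does not require $\A\subset\B$ to be full.
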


Next, we exemplify how to construct complex indecomposables using this technique. 
This example also shows that \cref{adgfgjhjk} does not hold for projective dimension strictly greater than $1$.

\begin{figure}[ht]
\centering
\adjustbox{scale=.8}{
\begin{tikzpicture}[on top/.style={preaction={draw=white,-,line width=#1}},
on top/.default=4pt]
%labels 
\node (a00)[] at (11,8.8) {$\mathbb{F}$};
\node (a0)[] at (11,7.5) {$\mathbb{F}^2$};
\node (a1)[] at (12,7) {$\mathbb{F}$};
\node (a2)[] at (9.5,6.5) {$\mathbb{F}$};
\node (a3)[] at (11,6) {$\mathbb{F}$};
\node (a4)[] at (12,5.5) {$\mathbb{F}$};
\node (a5)[] at (9.5,5) {$\mathbb{F}$};
\node (a6)[] at (10.5,4.5) {$\mathbb{F}$};
\node (a7)[] at (4.5,5.5) {$\mathbb{F}$};
\node (a8)[] at (2.5,5) {$\mathbb{F}$};
\node (a9)[] at (6,5) {$\mathbb{F}^3$};
\node (a10)[] at (4.5,4) {$\mathbb{F}^3$};
\node (a11)[] at (6,3.7) {$\mathbb{F}$};
\node (a12)[] at (8,4) {$\mathbb{F}$};
\node (a13)[] at (9,3.5) {$\mathbb{F}$};
\node (a14)[] at (2.5,3.7) {$\mathbb{F}$};
\node (a15)[] at (2.8,3) {$\mathbb{F}$};
\node (a16)[] at (.85,2.7) {$\mathbb{F}$};
\node (a17)[] at (4.5,2.7) {$\mathbb{F}$};
\node (a18)[] at (6.5,3) {$\mathbb{F}$};
\node (a19)[] at (7.5,2.5) {$\mathbb{F}$};
\node (a20)[] at (2.8,1.6) {$\mathbb{F}$};
\node (a21)[] at (6,6.5) {$\mathbb{F}$};
\draw[->]  (a00) -- node [pos=.4,label={[label distance=-.2cm]left:\scalemath{0.6}{\begin{bmatrix} 0 \\ 1 \end{bmatrix}}}] {} (a0);
\draw[->]  (a0) -- node [pos=.6,label={[label distance=-.2cm]right:\scalemath{0.6}{\begin{bmatrix} 1 & 0 \end{bmatrix}}}] {} (a3);
\draw[->]  (a2) -- node [pos=0.3,label=above:\scalemath{0.6}{\begin{bmatrix} 1 \\ 0 \end{bmatrix}}] {} (a0);
\draw[->]  (a1) -- node [pos=0.3,label=above:\scalemath{0.6}{\begin{bmatrix} 1 \\ 1 \end{bmatrix}}] {} (a0);
\draw[->]  (a5) -- (a3);
\draw[->]  (a4) -- (a3);
\draw[->]  (a2) -- (a5);
\draw[->]  (a1) -- (a4);
\draw[->]  (a6) -- (a5);
\draw[->]  (a6) -- (a4);
\draw[->]  (a13) -- (a12);
\draw[->]  (a13) -- (a6);
\draw[->]  (a12) -- (a5);
\draw[->]  (a19) -- (a13);
\draw[->]  (a19) -- (a18);
\draw[->]  (a18) -- (a12);
\draw[->]  (a12) -- node [pos=.4,label={[label distance=-.1cm]above:\scalemath{0.6}{\begin{bmatrix} 0 \\ 0 \\ 1 \end{bmatrix}}}] {} (a9);
\draw[->]  (a10) -- (a9);
\draw[->]  (a10) -- node [pos=.4,label={[label distance=-.15cm]right:$f$}] {} (a17);
\draw[->]  (a8) -- node [pos=.5,label={[label distance=-.1cm]above:\scalemath{0.6}{\begin{bmatrix} 1 \\ 0 \\ 0 \end{bmatrix}}}] {} (a10);
\draw[->]  (a8) -- (a14);
\draw[->]  (a7) -- (a21);
\draw[->]  (a7) -- node [pos=.4,label={[label distance=-.2cm]right:\scalemath{0.6}{\begin{bmatrix} 0 \\ 1 \\ 0 \end{bmatrix}}}] {} (a10);
\draw[->]  (a21) -- node [pos=.4,label={[label distance=-.2cm]right:\scalemath{0.6}{\begin{bmatrix} 0 \\ 1 \\ 0 \end{bmatrix}}}] {} (a9);
\draw[->]  (a16) -- (a14);
\draw[->]  (a16) -- (a20);
\draw[->]  (a20) -- (a17);
\draw[->]  (a15) -- (a20);
\draw[->]  (a14) -- (a17);
\draw[->]  (a17) -- (a11);
\draw[->]  (a9) -- node [pos=.4,label={[label distance=-.15cm]right:$f$}] {} (a11);
\draw[->, on top]  (a15) -- node [pos=.3,label={[label distance=-.1cm]above:\scalemath{0.6}{\begin{bmatrix} 1 \\ 1 \\ 1 \end{bmatrix}}}] {} (a10);
\draw[->, on top]  (a18) -- node [pos=0.3,label=below:\scalemath{0.6}{\begin{bmatrix} 0 \\ 0 \\ 1 \end{bmatrix}}] {} (a10);
\end{tikzpicture}
}
\caption{A parametrized chain complex $X$ (indexed by the poset in \cref{subfig_poset_ind_3}). The displayed maps are identities if not otherwise specified, and $f=[1 \ 0 \ 0]$. The vertical maps are chain maps. Everything not displayed is zero.}
\label{diag_the_counterexample}
\end{figure}

\begin{corollary}\label{monster_is_indecomposable}
    The object $X$ in $\Fun(\D, \ch(\vect))$, defined as shown in \cref{diag_the_counterexample}, is indecomposable.
\end{corollary}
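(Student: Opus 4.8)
The plan is to recognize $X$ as being assembled by gluing smaller pieces along subposets and to apply the criterion of \cref{th_gluing} with $\C=\ch(\vect)$; this is legitimate since $\ch(\vect)$ is abelian, so $\Fun(\D,\ch(\vect))$ is an abelian functor category to which the gluing theory applies verbatim. Concretely, I would fix a filtration of the indexing poset by full subposets $\D_1\subset\D_2\subset\cdots\subset\D_k=\D$, where $X_{\D_1}$ is visibly indecomposable and each $\D_{j+1}=\D_j\cup\B_j$ is obtained by adjoining a piece $\B_j$. At the $j$-th step I set $\A=\D_j$ and $\B=\B_j$, so that $X_\A=X_{\D_j}$ is indecomposable by the inductive hypothesis and $\A\cup\B=\D_{j+1}$. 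Invoking \cref{th_gluing} (or its corollary \cref{asdvadfvdfv}) then promotes indecomposability of $X_{\D_j}$ to indecomposability of $X_{\D_{j+1}}$, and after finitely many steps we conclude that $X=X_{\D_k}$ is indecomposable.

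For the base case I would isolate a subposet on which the transition maps of $X$ are identities between copies of $\mathbb{F}$; such a restriction is indecomposable because, by \cref{lem:indecomposable}, every endomorphism is a single scalar, hence either zero or an isomorphism. The inductive step is where the precise data of \cref{diag_the_counterexample} enter: for each gluing I would compute the canonical morphism $\beta\colon i^kX_{\A\cap\B}\to X_\B$ along $i\colon\A\cap\B\hookrightarrow\B$, identify $\coker\beta$, and verify whichever of the three equivalent conditions of \cref{th_gluing} is most convenient. In the ``free'' gluings, where the newly adjoined values receive nothing from the overlap, I expect $\hom(\coker\beta,X_\B)=0$ and apply \cref{asdvadfvdfv}.\textit{1} directly; in the gluings along the non-identity maps $[0\ 1]^{\top}$, $[1\ 1]^{\top}$, and $f=[1\ 0\ 0]$, I would instead check condition \textit{3}, namely that every endomorphism of $X_\B$ whose restriction to $X_{\A\cap\B}$ vanishes is nilpotent.

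The main obstacle is exactly this rigidity computation at the gluings through the higher-dimensional values $\mathbb{F}^2$ and $\mathbb{F}^3$, where $\coker\beta$ is genuinely nonzero. There I must show that the specific vectors and the matrix $f=[1\ 0\ 0]$ chosen in \cref{diag_the_counterexample} force the restriction homomorphism $\End(X_\B)\to\End(X_{\A\cap\B})$ to have only nilpotent kernel; by \cref{sdgsdghfg} this is interchangeable with the $\hom$-vanishing condition, giving me two equivalent formulations to play off against each other. The subtlety is that an endomorphism of $X_\B$ is a family of \emph{chain} maps, so it must additionally commute with the vertical differentials; the interaction between these identity differentials and the mismatched inclusions $[1\ 0]^{\top}$ versus $[1\ 1]^{\top}$ is precisely what obstructs any nontrivial idempotent. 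Once each gluing is verified, indecomposability of $X$ follows immediately from finiteness of the filtration.
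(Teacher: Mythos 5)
Your proposal is correct and takes essentially the same route as the paper: there, too, indecomposability is proved by starting from a visibly indecomposable restriction and repeatedly applying \cref{asdvadfvdfv} along a finite chain of gluings (the three steps depicted in \cref{fig:gluing-posets}). The only difference is in which equivalent criterion gets checked at each step—the paper verifies $\hom(\coker\beta, X_\B)=0$ throughout, using in steps (a) and (c) the shortcut that $i^k X_{\A\cap\B}$ is concentrated in a single homological degree so that commutativity with the differentials kills every map, and a direct computation in step (b)—whereas you propose switching to the nilpotency condition of \cref{th_gluing} at the harder gluings, which is equally legitimate since the conditions are equivalent.
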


\begin{proof}
The proof follows by repeatedly applying \cref{asdvadfvdfv} to the restrictions of $X$ to the series of posets depicted in \cref{fig:gluing-posets} since $X_\A$ in step (a) is clearly indecomposable.
Indeed, in step (a) $i^k X_{\A\cap \B}$ is nonzero 
only in degree $0$,  and in step (c) it is nonzero only in degree $1$. 
Therefore, $\coker\beta$ is zero in the same degrees. 
So, by commutativity, the only possible morphism between $\coker\beta$ and $X_\B$ is the zero morphism. 
In step (b), we see that $\hom(\coker\beta, X_{\mathcal{B}})=0$ by direct verification.
\end{proof} 

\begin{figure}[ht]
\begin{subfigure}[b]{.18\linewidth}
\centering
\adjustbox{scale=0.7}{
\begin{tikzpicture}
    \node[draw,shape=circle,fill=black,scale=0.3] (x1) {};
    \node[draw,shape=circle,fill=black, above left=1cm of x1,scale=0.3] (x2) {};
    \node[draw,shape=circle,fill=black, above right=1cm of x1,scale=0.3] (x3) {};
    \node[draw,shape=circle,fill=black, above left=1cm of x3,scale=0.3] (x4) {};
    \node[below=0.55cm of x1,scale=0.3] (xy) {};
    \node[draw=red, thick,fit=(x2),inner sep=1.3ex,circle,label=left:{\textcolor{red}{$\A$}}] (A) {};
    \node[above=0.4cm of x3] (B) {\textcolor{blue}{$\mathcal{B}$}};
    \draw[draw=blue, rounded corners, thick]([shift={(-6mm,+3mm)}]x1.west) -- ([shift={(-3mm,+0mm)}]x2.west) -- ([shift={(+0mm,+3mm)}]x4.north) -- ([shift={(+3mm,+0mm)}]x3.east) -- ([shift={(+0mm,-3mm)}]x1.south) -- ([shift={(-6mm,+3mm)}]x1.west);
    \draw[->, shorten >=2pt, shorten <=2pt] (x2) -- (x1);
    \draw[->, shorten <=2pt, shorten >=2pt] (x2) -- (x4);
    \draw[->, shorten >=2pt, shorten <=2pt] (x1) -- (x3);
    \draw[->, shorten >=2pt, shorten <=2pt] (x4) -- (x3);
\end{tikzpicture}
}
\caption{}
\label{subfig_poset_ind_1}
\end{subfigure}
\begin{subfigure}[b]{.35\linewidth}
\centering
\adjustbox{scale=0.7}{
\begin{tikzpicture}
    \node[draw,shape=circle,fill=black,scale=0.3] (x1) {};
    \node[draw,shape=circle,fill=black, above left=1cm of x1,scale=0.3] (x2) {};
    \node[draw,shape=circle,fill=black, above right=1cm of x1,scale=0.3] (x3) {};
    \node[draw,shape=circle,fill=black, above left=1cm of x3,scale=0.3] (x4) {};
    \node[draw,shape=circle,fill=black, above right=1cm of x3,scale=0.3] (x5) {};
    \node[draw,shape=circle,fill=black, below right=1cm of x3,scale=0.3] (x6) {};
    \node[draw,shape=circle,fill=black, above right=1cm of x6,scale=0.3] (x7) {};
    \node[draw,shape=circle,fill=black, below right=1cm of x6,scale=0.3] (x8) {};
    \node[draw,shape=circle,fill=black, above right=1cm of x8,scale=0.3] (x9) {};
    \node[above=0.4cm of x2] (A) {\textcolor{red}{$\A$}};
    \draw[draw=red, rounded corners, thick]([shift={(-6mm,+3mm)}]x1.west) -- ([shift={(-3mm,+0mm)}]x2.west) -- ([shift={(+0mm,+3mm)}]x4.north) -- ([shift={(+3mm,+0mm)}]x3.east) -- ([shift={(+0mm,-3mm)}]x1.south) -- ([shift={(-6mm,+3mm)}]x1.west);
    \node[above right=0.2cm of x7] (B) {\textcolor{blue}{$\mathcal{B}$}};
    \draw[draw=blue, rounded corners, thick]([shift={(-6mm,+3mm)}]x6.west) -- ([shift={(-3mm,+0mm)}]x3.west) -- ([shift={(+0mm,+3mm)}]x5.north) -- ([shift={(+3mm,+0mm)}]x9.east) -- ([shift={(+0mm,-3mm)}]x8.south) -- ([shift={(-6mm,+3mm)}]x6.west);
    \draw[->, shorten >=2pt, shorten <=2pt] (x2) -- (x1);
    \draw[->, shorten <=2pt, shorten >=2pt] (x2) -- (x4);
    \draw[->, shorten >=2pt, shorten <=2pt] (x1) -- (x3);
    \draw[->, shorten >=2pt, shorten <=2pt] (x4) -- (x3);
    \draw[->, shorten >=2pt, shorten <=2pt] (x6) -- (x3);
    \draw[->, shorten >=2pt, shorten <=2pt] (x3) -- (x5);
    \draw[->, shorten <=2pt, shorten >=2pt] (x6) -- (x7);
    \draw[->, shorten >=2pt, shorten <=2pt] (x7) -- (x5);
    \draw[->, shorten >=2pt, shorten <=2pt] (x8) -- (x6);
    \draw[->, shorten >=2pt, shorten <=2pt] (x8) -- (x9);
    \draw[->, shorten >=2pt, shorten <=2pt] (x9) -- (x7);    
\end{tikzpicture}
}
\caption{}
\label{subfig_poset_ind_2}
\end{subfigure}
\begin{subfigure}[b]{.45\linewidth}
\centering
\adjustbox{scale=0.7}{
\begin{tikzpicture}
    \node[draw,shape=circle,fill=black,scale=0.3] (x1) {};
    \node[draw,shape=circle,fill=black, above left=1cm of x1,scale=0.3] (x2) {};
    \node[draw,shape=circle,fill=black, above right=1cm of x1,scale=0.3] (x3) {};
    \node[draw,shape=circle,fill=black, above left=1cm of x3,scale=0.3] (x4) {};
    \node[draw,shape=circle,fill=black, above right=1cm of x3,scale=0.3] (x5) {};
    \node[draw,shape=circle,fill=black, below right=1cm of x3,scale=0.3] (x6) {};
    \node[draw,shape=circle,fill=black, above right=1cm of x6,scale=0.3] (x7) {};
    \node[draw,shape=circle,fill=black, below right=1cm of x6,scale=0.3] (x8) {};
    \node[draw,shape=circle,fill=black, above right=1cm of x8,scale=0.3] (x9) {};
    \node[draw,shape=circle,fill=black, above right=1cm of x7,scale=0.3] (x10) {};
    \node[draw,shape=circle,fill=black, above right=1cm of x9,scale=0.3] (x11) {};
    \node[draw,shape=circle,fill=black, above right=1cm of x10,scale=0.3] (x12) {};
    \node[draw,shape=circle,fill=black, above right=1cm of x11,scale=0.3] (x13) {};
    \node[above=0.4cm of x3] (A) {\textcolor{red}{$\A$}};
    \draw[draw=red, rounded corners, thick] ([shift={(-6mm,+3mm)}]x6.west) -- ([shift={(0mm,-3mm)}]x3.south) -- ([shift={(0mm,-3mm)}]x1.south) -- ([shift={(-3mm,0mm)}]x2.west) -- ([shift={(0mm,+3mm)}]x4.north) -- ([shift={(+0mm,+3mm)}]x3.north) -- ([shift={(+0mm,+3mm)}]x5.north) -- ([shift={(+3mm,+0mm)}]x9.east) -- ([shift={(+0mm,-3mm)}]x8.south) -- ([shift={(-6mm,+3mm)}]x6.west);
    \node[below right=0.2cm of x11] (B) {\textcolor{blue}{$\mathcal{B}$}};
    \draw[draw=blue, rounded corners, thick]([shift={(-7mm,+3.5mm)}]x8.west) -- ([shift={(-3.5mm,+0mm)}]x6.west) -- ([shift={(+0mm,+3.5mm)}]x12.north) -- ([shift={(+3.5mm,+0mm)}]x13.east) -- ([shift={(+0mm,-3.5mm)}]x8.south) -- ([shift={(-7mm,+3.5mm)}]x8.west);
    \draw[->, shorten >=2pt, shorten <=2pt] (x2) -- (x1);
    \draw[->, shorten <=2pt, shorten >=2pt] (x2) -- (x4);
    \draw[->, shorten >=2pt, shorten <=2pt] (x1) -- (x3);
    \draw[->, shorten >=2pt, shorten <=2pt] (x4) -- (x3);
    \draw[->, shorten >=2pt, shorten <=2pt] (x6) -- (x3);
    \draw[->, shorten >=2pt, shorten <=2pt] (x3) -- (x5);
    \draw[->, shorten <=2pt, shorten >=2pt] (x6) -- (x7);
    \draw[->, shorten >=2pt, shorten <=2pt] (x7) -- (x5);
    \draw[->, shorten >=2pt, shorten <=2pt] (x8) -- (x6);
    \draw[->, shorten >=2pt, shorten <=2pt] (x8) -- (x9);
    \draw[->, shorten >=2pt, shorten <=2pt] (x9) -- (x7);    
    \draw[->, shorten >=2pt, shorten <=2pt] (x7) -- (x10);
    \draw[->, shorten <=2pt, shorten >=2pt] (x9) -- (x11);
    \draw[->, shorten >=2pt, shorten <=2pt] (x11) -- (x10);
    \draw[->, shorten >=2pt, shorten <=2pt] (x10) -- (x12);
    \draw[->, shorten >=2pt, shorten <=2pt] (x11) -- (x13);
    \draw[->, shorten >=2pt, shorten <=2pt] (x13) -- (x12);    
\end{tikzpicture}
}
\caption{}
\label{subfig_poset_ind_3}
\end{subfigure}
\caption{Posets used to construct the indecomposable object shown in \cref{diag_the_counterexample}.}
\label{fig:gluing-posets}
\end{figure}

\paragraph{\textbf{Acknowledgment}}
Wojciech Chach\'olski and Francesca Tombari were partially supported by VR, the Wallenberg AI, Autonomous System and Software Program (WASP) funded by Knut and Alice Wallenberg Foundation, and MultipleMS funded by the European Union under the Horizon 2020 program, grant agreement 733161, and dBRAIN collaborative project at digital futures at KTH.
Barbara Giunti was supported by the Austrian Science Fund (FWF)  P 29984-N35 and P 33765-N. 
Claudia Landi is a member of GNSAGA-INdAM and partially carried out this work within the activities of ARCES (University of Bologna).
Barbara Giunti thanks Luis Scoccola for discussions about \cref{sec_gluing}.


\begin{thebibliography}{10}
	
	\bibitem{auslander_reiten}
	Maurice Auslander, Idun Reiten, and Sverre~O. Smal\o.
	\newblock {\em Representation theory of {A}rtin algebras}, volume~36 of {\em
		Cambridge Studies in Advanced Mathematics}.
	\newblock Cambridge University Press, Cambridge, 1995.
	\newblock \href {https://doi.org/10.1017/CBO9780511623608}
	{\path{doi:10.1017/CBO9780511623608}}.
	
	\bibitem{BauerScoccola}
	Ulrich Bauer and Luis Scoccola.
	\newblock Generic two-parameter persistence modules are nearly indecomposable,
	2022.
	\newblock \href {http://arxiv.org/abs/2211.15306} {\path{arXiv:2211.15306}}.
	
	\bibitem{BlumbergLesnick}
	Andrew~J. Blumberg and Michael Lesnick.
	\newblock Universality of the homotopy interleaving distance.
	\newblock {\em Transactions of the American Mathematical Society},
	376(12):8269--8307, 2023.
	
	\bibitem{bubemili2021}
	Peter Bubenik and Nikola Mili\'{c}evi\'{c}.
	\newblock Homological algebra for persistence modules.
	\newblock {\em Found. Comput. Math.}, 21(5):1233--1278, 2021.
	\newblock \href {https://doi.org/10.1007/s10208-020-09482-9}
	{\path{doi:10.1007/s10208-020-09482-9}}.
	
	\bibitem{realisations_posets}
	Wojciech Chach{\'o}lski, Alvin Jin, and Francesca Tombari.
	\newblock Realisations of posets and tameness, 2021.
	\newblock Preprint, available at arXiv:2112.12209.
	
	\bibitem{bcw}
	Wojciech Chachólski, Barbara Giunti, and Claudia Landi.
	\newblock Invariants for tame parametrised chain complexes.
	\newblock {\em Homology, {H}omotopy, and {A}pplications}, 23(2):183--213, 2021.
	
	\bibitem{MR4468593}
	Justin Curry, Haibin Hang, Washington Mio, Tom Needham, and Osman~Berat Okutan.
	\newblock Decorated merge trees for persistent topology.
	\newblock {\em J. Appl. Comput. Topol.}, 6(3):371--428, 2022.
	\newblock \href {https://doi.org/10.1007/s41468-022-00089-3}
	{\path{doi:10.1007/s41468-022-00089-3}}.
	
	\bibitem{deLyra}
	C.~B. de~Lyra.
	\newblock Minimal complexes and maps.
	\newblock {\em Bol. Soc. Mat. S\~{a}o Paulo}, 7:85--98 (1954), 1952.
	
	\bibitem{dwyerspalinski}
	William~G. Dwyer and Jan Spali\'{n}ski.
	\newblock Homotopy theories and model categories.
	\newblock {\em Handbook of Algebraic Topology}, pages 73--126, 1995.
	
	\bibitem{barbara_thesis}
	Barbara Giunti.
	\newblock {\em Tame parametrised chain complexes}.
	\newblock PhD thesis, University of Pavia, 2019.
	
	\bibitem{halperin}
	S.~Halperin.
	\newblock Lectures on minimal models.
	\newblock {\em M\'{e}m. Soc. Math. France (N.S.)}, (9-10):261, 1983.
	
	\bibitem{hess2023persistent}
	Kathryn Hess, Samuel Lavenir, and Kelly Maggs.
	\newblock Persistent $k$-minimal models and the interval sphere model
	structure, 2023.
	\newblock \href {http://arxiv.org/abs/2312.08326} {\path{arXiv:2312.08326}}.
	
	\bibitem{jacobson}
	Nathan Jacobson.
	\newblock {\em Basic algebra}, volume~2.
	\newblock Dover, second edition, 2009.
	
	\bibitem{maclane}
	Saunders Mac~Lane.
	\newblock {\em Categories for the working mathematician}, volume~5 of {\em
		Graduate Texts in Mathematics}.
	\newblock Springer-Verlag, New York, second edition, 1998.
	
	\bibitem{miller2020homological}
	Ezra Miller.
	\newblock Homological algebra of modules over posets, 2020.
	\newblock \href {http://arxiv.org/abs/2008.00063} {\path{arXiv:2008.00063}}.
	
	\bibitem{moore}
	James~C. Moore.
	\newblock Semi-simplicial complexes and postnikov systems.
	\newblock {\em Proceedings Interntational Symposium on Algebraic Topology and
		its Applications}, 1956.
	
	\bibitem{Nakajima}
	Hiraku Nakajima.
	\newblock Quiver varieties and finite-dimensional representations of quantum
	affine algebras.
	\newblock {\em J. Amer. Math. Soc.}, 14(1):145--238, 2001.
	\newblock \href {https://doi.org/10.1090/S0894-0347-00-00353-2}
	{\path{doi:10.1090/S0894-0347-00-00353-2}}.
	
	\bibitem{oudot2015}
	Steve~Y. Oudot.
	\newblock {\em Persistence theory: from quiver representations to data
		analysis}, volume 209 of {\em Mathematical Surveys and Monographs}.
	\newblock American Mathematical Society, Providence, RI, 2015.
	\newblock \href {https://doi.org/10.1090/surv/209}
	{\path{doi:10.1090/surv/209}}.
	
	\bibitem{quillen}
	Daniel~G. Quillen.
	\newblock {\em Homotopical Algebra}.
	\newblock Springer, (1967).
	\newblock \href {https://doi.org/10.1007/BFB0097438}
	{\path{doi:10.1007/BFB0097438}}.
	
	\bibitem{Reineke}
	Markus Reineke.
	\newblock Moduli of representations of quivers.
	\newblock In {\em Trends in representation theory of algebras and related
		topics}, EMS Ser. Congr. Rep., pages 589--637. Eur. Math. Soc., Z\"{u}rich,
	2008.
	\newblock \href {https://doi.org/10.4171/062-1/14}
	{\path{doi:10.4171/062-1/14}}.
	
	\bibitem{sullivan1}
	Dennis Sullivan.
	\newblock Infinitesimal computations in topology.
	\newblock {\em Inst. Hautes \'{E}tudes Sci. Publ. Math.}, (47):269--331, 1977.
	
	\bibitem{weibel}
	Charles~A. Weibel.
	\newblock {\em An introduction to homological algebra}, volume~38 of {\em
		Cambridge Studies in Advanced Mathematics}.
	\newblock Cambridge University Press, Cambridge, 1994.
	
\end{thebibliography}
\end{document}